
\documentclass[12pt]{amsart}
\usepackage{txfonts}      
\usepackage{amssymb}
\usepackage{eucal}
\usepackage{amsmath}
\usepackage{amscd}
\usepackage{xcolor}
\usepackage{multicol}
\usepackage[all]{xy}           
\usepackage{graphicx}
\usepackage{color}
\usepackage{colordvi}
\usepackage{xspace}
\usepackage{tikz}
\usepackage{makecell}
\usepackage{appendix}
\usepackage{amsthm}

\usepackage{ifpdf}
\ifpdf
\usepackage[colorlinks,final,backref=page,hyperindex]{hyperref}
\else
\usepackage[colorlinks,final,hyperindex,hypertex]{hyperref}
\fi

\usepackage[active]{srcltx} 




\topmargin -.8cm \textheight 21.6cm \oddsidemargin 0cm
\evensidemargin -0cm \textwidth 16cm


\begin{document}


\newtheorem{thm}{Theorem}[section]
\newtheorem{lem}[thm]{Lemma}
\newtheorem{cor}[thm]{Corollary}
\newtheorem{pro}[thm]{Proposition}
\theoremstyle{definition}
\newtheorem{defi}[thm]{Definition}
\newtheorem{ex}[thm]{Example}
\newtheorem{rmk}[thm]{Remark}
\newtheorem{pdef}[thm]{Proposition-Definition}
\newtheorem{condition}[thm]{Condition}

\renewcommand{\labelenumi}{{\rm(\alph{enumi})}}
\renewcommand{\theenumi}{\alph{enumi}}

\newcommand {\emptycomment}[1]{} 

\newcommand{\nc}{\newcommand}
\newcommand{\delete}[1]{}

\nc{\tred}[1]{\textcolor{red}{#1}}
\nc{\tblue}[1]{\textcolor{blue}{#1}}
\nc{\tgreen}[1]{\textcolor{green}{#1}}
\nc{\tpurple}[1]{\textcolor{purple}{#1}}
\nc{\tgray}[1]{\textcolor{gray}{#1}}
\nc{\torg}[1]{\textcolor{orange}{#1}}
\nc{\tmag}[1]{\textcolor{magenta}}
\nc{\btred}[1]{\textcolor{red}{\bf #1}}
\nc{\btblue}[1]{\textcolor{blue}{\bf #1}}
\nc{\btgreen}[1]{\textcolor{green}{\bf #1}}
\nc{\btpurple}[1]{\textcolor{purple}{\bf #1}}

\nc{\todo}[1]{\tred{To do:} #1}

    \nc{\mlabel}[1]{\label{#1}}  
    \nc{\mcite}[1]{\cite{#1}}  
    \nc{\mref}[1]{\ref{#1}}  
    \nc{\meqref}[1]{\eqref{#1}}  
    \nc{\mbibitem}[1]{\bibitem{#1}} 

\delete{
    \nc{\mcite}[1]{\cite{#1}{\small{\tt{{\ }(#1)}}}}  
    \nc{\mref}[1]{\ref{#1}{\small{\tred{\tt{{\ }(#1)}}}}}  
    \nc{\meqref}[1]{\eqref{#1}{{\tt{{\ }(#1)}}}}  
    \nc{\mbibitem}[1]{\bibitem[\bf #1]{#1}} 
}


\nc{\cm}[1]{\textcolor{red}{Chengming:#1}}
\nc{\yy}[1]{\textcolor{blue}{Yanyong: #1}}
\nc{\li}[1]{\textcolor{purple}{#1}}
\nc{\lir}[1]{\textcolor{purple}{Li:#1}}

\nc{\revise}[1]{\textcolor{blue}{#1}}


\nc{\tforall}{\ \ \text{for all }}
\nc{\hatot}{\,\widehat{\otimes} \,}
\nc{\complete}{completed\xspace}
\nc{\wdhat}[1]{\widehat{#1}}

\nc{\ts}{\mathfrak{p}}
\nc{\mts}{c_{(i)}\ot d_{(j)}}

\nc{\NA}{{\bf NA}}
\nc{\LA}{{\bf Lie}}
\nc{\CLA}{{\bf CLA}}

\nc{\cybe}{CYBE\xspace}
\nc{\nybe}{NYBE\xspace}
\nc{\ccybe}{CCYBE\xspace}

\nc{\ndend}{pre-Novikov\xspace}
\nc{\calb}{\mathcal{B}}
\nc{\rk}{\mathrm{r}}
\newcommand{\g}{\mathfrak g}
\newcommand{\h}{\mathfrak h}
\newcommand{\pf}{\noindent{$Proof$.}\ }
\newcommand{\frkg}{\mathfrak g}
\newcommand{\frkh}{\mathfrak h}
\newcommand{\Id}{\rm{Id}}
\newcommand{\gl}{\mathfrak {gl}}
\newcommand{\ad}{\mathrm{ad}}
\newcommand{\add}{\frka\frkd}
\newcommand{\frka}{\mathfrak a}
\newcommand{\frkb}{\mathfrak b}
\newcommand{\frkc}{\mathfrak c}
\newcommand{\frkd}{\mathfrak d}
\newcommand {\comment}[1]{{\marginpar{*}\scriptsize\textbf{Comments:} #1}}

\nc{\vspa}{\vspace{-.1cm}}
\nc{\vspb}{\vspace{-.2cm}}
\nc{\vspc}{\vspace{-.3cm}}
\nc{\vspd}{\vspace{-.4cm}}
\nc{\vspe}{\vspace{-.5cm}}


\nc{\disp}[1]{\displaystyle{#1}}
\nc{\bin}[2]{ (_{\stackrel{\scs{#1}}{\scs{#2}}})}  
\nc{\binc}[2]{ \left (\!\! \begin{array}{c} \scs{#1}\\
    \scs{#2} \end{array}\!\! \right )}  
\nc{\bincc}[2]{  \left ( {\scs{#1} \atop
    \vspace{-.5cm}\scs{#2}} \right )}  
\nc{\ot}{\otimes}
\nc{\sot}{{\scriptstyle{\ot}}}
\nc{\otm}{\overline{\ot}}
\nc{\ola}[1]{\stackrel{#1}{\la}}

\nc{\scs}[1]{\scriptstyle{#1}} \nc{\mrm}[1]{{\rm #1}}

\nc{\dirlim}{\displaystyle{\lim_{\longrightarrow}}\,}
\nc{\invlim}{\displaystyle{\lim_{\longleftarrow}}\,}

\nc{\bfk}{{\bf k}} \nc{\bfone}{{\bf 1}}
\nc{\rpr}{\circ}
\nc{\dpr}{{\tiny\diamond}}
\nc{\rprpm}{{\rpr}}

\nc{\mmbox}[1]{\mbox{\ #1\ }} \nc{\ann}{\mrm{ann}}
\nc{\Aut}{\mrm{Aut}} \nc{\can}{\mrm{can}}
\nc{\twoalg}{{two-sided algebra}\xspace}
\nc{\colim}{\mrm{colim}}
\nc{\Cont}{\mrm{Cont}} \nc{\rchar}{\mrm{char}}
\nc{\cok}{\mrm{coker}} \nc{\dtf}{{R-{\rm tf}}} \nc{\dtor}{{R-{\rm
tor}}}
\renewcommand{\det}{\mrm{det}}
\nc{\depth}{{\mrm d}}
\nc{\End}{\mrm{End}} \nc{\Ext}{\mrm{Ext}}
\nc{\Fil}{\mrm{Fil}} \nc{\Frob}{\mrm{Frob}} \nc{\Gal}{\mrm{Gal}}
\nc{\GL}{\mrm{GL}} \nc{\Hom}{\mrm{Hom}} \nc{\hsr}{\mrm{H}}
\nc{\hpol}{\mrm{HP}}  \nc{\id}{\mrm{id}} \nc{\im}{\mrm{im}}

\nc{\incl}{\mrm{incl}} \nc{\length}{\mrm{length}}
\nc{\LR}{\mrm{LR}} \nc{\mchar}{\rm char} \nc{\NC}{\mrm{NC}}
\nc{\mpart}{\mrm{part}} \nc{\pl}{\mrm{PL}}
\nc{\ql}{{\QQ_\ell}} \nc{\qp}{{\QQ_p}}
\nc{\rank}{\mrm{rank}} \nc{\rba}{\rm{RBA }} \nc{\rbas}{\rm{RBAs }}
\nc{\rbpl}{\mrm{RBPL}}
\nc{\rbw}{\rm{RBW }} \nc{\rbws}{\rm{RBWs }} \nc{\rcot}{\mrm{cot}}
\nc{\rest}{\rm{controlled}\xspace}
\nc{\rdef}{\mrm{def}} \nc{\rdiv}{{\rm div}} \nc{\rtf}{{\rm tf}}
\nc{\rtor}{{\rm tor}} \nc{\res}{\mrm{res}} \nc{\SL}{\mrm{SL}}
\nc{\Spec}{\mrm{Spec}} \nc{\tor}{\mrm{tor}} \nc{\Tr}{\mrm{Tr}}
\nc{\mtr}{\mrm{sk}}

\nc{\ab}{\mathbf{Ab}} \nc{\Alg}{\mathbf{Alg}}

\nc{\BA}{{\mathbb A}} \nc{\CC}{{\mathbb C}} \nc{\DD}{{\mathbb D}}
\nc{\EE}{{\mathbb E}} \nc{\FF}{{\mathbb F}} \nc{\GG}{{\mathbb G}}
\nc{\HH}{{\mathbb H}} \nc{\LL}{{\mathbb L}} \nc{\NN}{{\mathbb N}}
\nc{\QQ}{{\mathbb Q}} \nc{\RR}{{\mathbb R}} \nc{\BS}{{\mathbb{S}}} \nc{\TT}{{\mathbb T}}
\nc{\VV}{{\mathbb V}} \nc{\ZZ}{{\mathbb Z}}


\nc{\calao}{{\mathcal A}} \nc{\cala}{{\mathcal A}}
\nc{\calc}{{\mathcal C}} \nc{\cald}{{\mathcal D}}
\nc{\cale}{{\mathcal E}} \nc{\calf}{{\mathcal F}}
\nc{\calfr}{{{\mathcal F}^{\,r}}} \nc{\calfo}{{\mathcal F}^0}
\nc{\calfro}{{\mathcal F}^{\,r,0}} \nc{\oF}{\overline{F}}
\nc{\calg}{{\mathcal G}} \nc{\calh}{{\mathcal H}}
\nc{\cali}{{\mathcal I}} \nc{\calj}{{\mathcal J}}
\nc{\call}{{\mathcal L}} \nc{\calm}{{\mathcal M}}
\nc{\caln}{{\mathcal N}} \nc{\calo}{{\mathcal O}}
\nc{\calp}{{\mathcal P}} \nc{\calq}{{\mathcal Q}} \nc{\calr}{{\mathcal R}}
\nc{\calt}{{\mathcal T}} \nc{\caltr}{{\mathcal T}^{\,r}}
\nc{\calu}{{\mathcal U}} \nc{\calv}{{\mathcal V}}
\nc{\calw}{{\mathcal W}} \nc{\calx}{{\mathcal X}}
\nc{\CA}{\mathcal{A}}

\nc{\fraka}{{\mathfrak a}} \nc{\frakB}{{\mathfrak B}}
\nc{\frakb}{{\mathfrak b}} \nc{\frakd}{{\mathfrak d}}
\nc{\oD}{\overline{D}}
\nc{\frakF}{{\mathfrak F}} \nc{\frakg}{{\mathfrak g}}
\nc{\frakm}{{\mathfrak m}} \nc{\frakM}{{\mathfrak M}}
\nc{\frakMo}{{\mathfrak M}^0} \nc{\frakp}{{\mathfrak p}}
\nc{\frakS}{{\mathfrak S}} \nc{\frakSo}{{\mathfrak S}^0}
\nc{\fraks}{{\mathfrak s}} \nc{\os}{\overline{\fraks}}
\nc{\frakT}{{\mathfrak T}}
\nc{\oT}{\overline{T}}
\nc{\frakX}{{\mathfrak X}} \nc{\frakXo}{{\mathfrak X}^0}
\nc{\frakx}{{\mathbf x}}
\nc{\frakTx}{\frakT}      
\nc{\frakTa}{\frakT^a}        
\nc{\frakTxo}{\frakTx^0}   
\nc{\caltao}{\calt^{a,0}}   
\nc{\ox}{\overline{\frakx}} \nc{\fraky}{{\mathfrak y}}
\nc{\frakz}{{\mathfrak z}} \nc{\oX}{\overline{X}}

\font\cyr=wncyr10


\title[]{On Poisson conformal bialgebras}

\author{Yanyong Hong}
\address{School of Mathematics, Hangzhou Normal University,
Hangzhou 311121, PR China}
\email{yyhong@hznu.edu.cn}

\author{Chengming Bai}
\address{Chern Institute of Mathematics \& LPMC, Nankai University, Tianjin 300071, PR China}
\email{baicm@nankai.edu.cn}

\subjclass[2010]{17A60, 17B63, 17B62, 53D55} \keywords{Poisson
conformal algebra, Poisson conformal bialgebra, classical
Yang-Baxter equation, $\mathcal{O}$-operator, Novikov-Poisson
algebra, Gel'fand-Dorfman algebra}

\begin{abstract}
 We develop a conformal analog of the theory
of Poisson bialgebras as well as a bialgebra theory of Poisson
conformal algebras. We introduce the notion of Poisson conformal
bialgebras, which are characterized by Manin
triples of Poisson conformal algebras. 
A class of special Poisson conformal bialgebras called coboundary
Poisson conformal bialgebras are constructed from skew-symmetric
solutions of the Poisson conformal Yang-Baxter equation, whose
operator forms are studied. Then we show that the semi-classical
limits of conformal formal deformations of commutative and
cocommutative antisymmetric infinitesimal conformal bialgebras are
Poisson conformal bialgebras. Finally, we extend the
correspondence between Poisson conformal algebras and
Poisson-Gel'fand-Dorfman algebras to the context of bialgebras,
that is, we introduce the notion of Poisson-Gel'fand-Dorfman
bialgebras and show that Poisson-Gel'fand-Dorfman bialgebras
correspond to a class of Poisson conformal bialgebras. Moreover, a
construction of Poisson conformal bialgebras from
pre-Poisson-Gel'fand-Dorfman algebras is given.
\end{abstract}

\maketitle

\vspace{-1.2cm}

\tableofcontents

\vspace{-1.2cm}

\allowdisplaybreaks

\section{Introduction}
A Poisson algebra is a vector space with  a Lie algebra structure
and a commutative associative algebra structure which are entwined
by the Leibniz rule. Poisson algebras play important roles in many
fields in mathematics and mathematical physics, such as Poisson
geometry \cite{V, W},  classical and quantum mechanics \cite{Ar,
BFFLS}, quantization theory \cite{BFFLS1, BFFLS2, Hu, Ko},
algebraic geometry \cite{GK, P} and quantum groups \cite{CP, Dr}.
A bialgebra theory of Poisson algebras with the notion of Poisson
bialgebras was given in \cite{NB} in terms of representation
theory of Poisson algebras, which is related with integrable
systems. A Poisson bialgebra is a combination of a Lie bialgebra
\cite{CP} and a commutative and cocommutative antisymmetric
infinitesimal bialgebra (ASI bialgebra) \cite{A1, Bai1} with some
compatibility conditions, which is characterized by a Manin triple
of Poisson algebras. 
Skew-symmetric solutions of the Poisson Yang-Baxter equation
naturally produce Poisson bialgebras. We also point out that a
bialgebra theory of noncommutative Poisson algebras was presented
in \cite{LBS}.

The notion of Lie conformal algebras  was  introduced  by V. Kac
in \cite{DK1, K1}, which gives an axiomatic description of the
singular part of the operator product expansion of chiral fields
in two-dimensional conformal field theory \cite{BPZ}. Lie
conformal algebras are also called vertex Lie algebras \cite{DLM,
Pr}, which are closely related with vertex algebras,
infinite-dimensional Lie algebras satisfying the locality property
\cite{K}, Hamiltonian formalism in the theory of nonlinear
evolution \cite{BDK} and so on. Note that there is a class of Lie
conformal algebras which correspond to a class of non-associative
algebras called Gel'fand-Dorfman algebras \cite{X1}. Associative
conformal algebras naturally appear in the representation theory
of Lie conformal algebras \cite{K1}. The notion of a Poisson
conformal algebra was introduced in \cite{Ko4}, which consists of
a Lie conformal algebra and a commutative associative conformal
algebra satisfying some compatibility condition. It was shown in
\cite{Ko4, KKP} that there are close relationships between Poisson
conformal algebras and representations of Lie conformal algebras.
Note that in \cite{LZ} the notion of noncommutative Poisson
conformal algebras was introduced and cohomology of
(noncommutative) Poisson conformal algebras was investigated.
Moreover, as the classical case, the semi-classical limits of
conformal formal deformations of commutative associative conformal
algebras are Poisson conformal algebras \cite{LZ}. The notion of
Poisson-Gel'fand-Dorfman algebras (PGD-algebras) was also given in
\cite{LZ}, which correspond to a class of Poisson conformal
algebras.

Motivated by the classical Lie bialgebra theory, a bialgebra
theory of Lie conformal algebras with the notion of Lie conformal
bialgebras was developed by J. Liberati in \cite{L}, including the
introduction of the notions of conformal Manin triples, coboundary
Lie conformal bialgebras and conformal classical Yang-Baxter
equation (CCYBE). The operator forms of the CCYBE were studied in
\cite{HB} where the notion of $\mathcal{O}$-operators on Lie
conformal algebras was introduced, providing skew-symmetric
solutions of the CCYBE. Moreover,  the notion of Gel'fand-Dorfman
bialgebras was introduced in \cite{HBG}, which also naturally
correspond to a class of Lie conformal bialgebras, based on the
correspondence between Gel'fand-Dorfman algebras and a class of
Lie conformal algebras. A conformal analog of ASI bialgebras was
presented in \cite{HB1} with the notion of ASI conformal
bialgebras, showing that they are equivalent to double
constructions of Frobenius conformal algebras, while
skew-symmetric solutions of the associative conformal Yang-Baxter
equation produce ASI conformal bialgebras. Moreover,
$\mathcal{O}$-operators on associative conformal algebras provide
skew-symmetric solutions of the associative conformal Yang-Baxter
equation and hence give rise to ASI conformal bialgebras.
Motivated by these results, it is natural to consider a conformal
analog of Poisson bialgebras, namely  Poisson conformal
bialgebras. Equivalently, they are also the bialgebras for Poisson
conformal algebras. Hence it is also natural to consider the
generalization of the aforementioned results involving Poisson
conformal algebras in the context of bialgebras. For example,
whether Poisson conformal bialgebras can be understood as certain
semi-classical limits of  conformal formal deformations of
commutative and cocommutative ASI conformal bialgebras and whether
there exists a bialgebra theory of PGD-algebras which correspond
to a class of Poisson conformal bialgebras. These are our main
motivations to present this paper.

In this paper, we first develop a bialgebra theory of Poisson
conformal algebras. We introduce the notion  of a Poisson
conformal bialgebra, which is both a Lie conformal bialgebra and a
commutative and cocommutative ASI conformal bialgebra satisfying
certain compatibility conditions.  We show that under some
conditions, a Poisson conformal bialgebra is
equivalent to a Manin triple of Poisson conformal algebras. 
Moreover, we also investigate the coboundary case of Poisson
conformal algebras, which naturally leads to the introduction of
the notion of Poisson conformal Yang-Baxter equation (PCYBE). It
is shown that skew-symmetric solutions of the PCYBE can produce
Poisson conformal bialgebras. We also introduce the notions  of
$\mathcal{O}$-operators on Poisson conformal algebras and
pre-Poisson conformal algebras, while the former provide
skew-symmetric solutions of the PCYBE and the latter naturally
produce $\mathcal{O}$-operators on Poisson conformal algebras.
These results are summarized in the following diagram.
\begin{equation}
    \begin{split}
        \xymatrix{
            \txt{ \tiny pre-Poisson\\ \tiny conformal\\ \tiny algebras} \ar[r]     & \txt{ \tiny $\mathcal{O}$-operators on\\ \tiny Poisson conformal\\ \tiny algebras}\ar[r] &
            \text{solutions of}\atop \text{the PCYBE} \ar[r]  & \text{Poisson conformal }\atop \text{ bialgebras}  \ar@{<->}[r] &
            \text{Manin triples of} \atop \text{Poisson conformal algebras} }
        \vspace{-.1cm}
    \end{split}
    \mlabel{eq:Lieconfdiag1}
\end{equation}

Secondly,  we introduce the notions of a conformal formal
deformation of a commutative and cocommutative ASI conformal
bialgebra and its semi-classical limit, and show that the
semi-classical limit is a Poisson conformal bialgebra.

Thirdly, we introduce the notions of Poisson-Gel'fand-Dorfman
bialgebras (PGD-bialgebras) and differential Novikov-Poisson
bialgebras. Note that  differential Novikov-Poisson bialgebras are
special PGD-bialgebras. Moreover, we show that PGD-bialgebras
correspond to a class of Poisson conformal bialgebras.  Hence
there is the following diagram.
\begin{equation}
           \text{\tiny differential Novikov-Poisson bialgebras} \;\;{\Large \subset } \;\;\text{\tiny PGD-bialgebras}  \;\;\longleftrightarrow\;\;
            \text{\tiny a class of Poisson conformal bialgebras}
        \vspace{-.1cm}
    \mlabel{eq:Lieconfdiag}
\end{equation}
\delete{\begin{equation}
    \begin{split}
        \xymatrix{
           \text{\tiny differential Novikov-Poisson bialgebras $\subset $}&  \text{\tiny PGD-bialgebras}  \ar@{<->}[r] &
            \txt{\tiny a class of\\
          \tiny Poisson conformal bialgebras} }
        \vspace{-.1cm}
    \end{split}
    \mlabel{eq:Lieconfdiag}
\end{equation}}
\delete{ We also introduce the definition of
Poisson-Gel'fand-Dorfman Yang-Baxter equation (PGDYBE) and show
that skew-symmetric solutions of PGDYBE can produce
PGD-bialgebras. Moreover, the operator forms of PGDYBE are
studied, where the $\mathcal{O}$-operators on PGD-algebras can
produce skew-symmetric solutions of PGDYBE.} We also introduce the
notion of a pre-PGD-algebra, which is a vector space with a
pre-Novikov algebra structure \cite{HBG}, a Zinbiel algebra
structure and a left-symmetric algebra structure. A construction
of pre-PGD-algebras from Zinbiel algebras with a derivation is
given. Note that pre-PGD-algebras also correspond to a class of
pre-Poisson conformal algebras. Combining with the aforementioned
theory of Poisson conformal bialgebras illustrated by
diagram~(\ref{eq:Lieconfdiag1}), there is a construction of
Poisson conformal bialgebras from pre-PGD-algebras as well as Zinbiel
algebras with a derivation, which is summarized by the following
diagram.
\begin{equation}
    \begin{split}
        \xymatrix{
          \txt{ \tiny Zinbiel algebras\\ \tiny with a derivation} \ar[r] &  \txt{  \tiny pre-PGD-\\ \tiny algebras} \ar@{<->}[r]  & \txt{\tiny a class of\\ \tiny pre-Poisson  \\ \tiny conformal algebras}\ar[r]^{\rm (\mref{eq:Lieconfdiag1})}&
            \txt{
          \tiny Poisson conformal\\ \tiny bialgebras} }
        \vspace{-.1cm}
    \end{split}
    \mlabel{eq:Lieconfdiag}
\end{equation}
\delete{Note that a differential Novikov-Poisson algebra (see \cite{BCZ}) is a special PGD-algebra. The definition of differential Novikov-Poisson bialgebras is given in this paper. Since a differential Novikov-Poisson bialgebra is a special PGD-bialgebras, the bialgebra theory of differential Novikov-Poisson algebras is also presented in this paper.}

This paper is organized as follows. In Section 2, we first recall
the notions of Poisson conformal algebras and PGD-algebras, and
the correspondence between PGD-algebras and a class of Poisson
conformal algebras. Then we recall some notions and facts
involving Poisson conformal algebras such as the notions of
representations, matched pairs and Manin triples of Poisson
conformal algebras, and the equivalence between Manin triples of
Poisson conformal algebras and some matched pairs of Poisson
conformal algebras. In Section 3, we introduce the notion of
Poisson conformal bialgebras characterized by Manin triples of
Poisson conformal algebras. We also give a detailed study on a
special class of Poisson conformal bialgebras, namely, coboundary
Poisson conformal bialgebras. In Section 4, we show that the
semi-classical limit of a conformal formal deformation of a
commutative and cocommutative ASI conformal bialgebra is a Poisson
conformal bialgebra. In Section 5, we introduce the notion of
PGD-bialgebras and show that they correspond to a class of Poisson
conformal bialgebras. The notion of pre-PGD-algebras is introduced
and hence a construction of Poisson conformal bialgebras from
pre-PGD-algebras is given.

 \vspace{0.1cm}
\noindent {\bf Notations.} Let ${\bf k}$ be a field of
characteristic $0$ and let $\bfk[\partial]$ be the polynomial
algebra with the variable $\partial$. Let $\mathbb{N}$ be the set of non-negative integers and $\mathbb{Z}$ be the set of integers. All vector spaces and
algebras over ${\bf k}$ (resp. all ${\bf k}[\partial]$-modules and
 conformal algebras) are assumed to be finite-dimensional
(resp. finite) unless otherwise stated, even though many results
still hold in the infinite-dimensional (resp. infinite) cases. For
a vector space $V$, let
$$\tau:V\otimes V\rightarrow V\otimes V,\quad u\otimes v\mapsto v\otimes u,\;\;\;u,v\in V,
$$
be the flip operator and let $V[\lambda]$ denote the set of
polynomials of $\lambda$ with coefficients in $V$. The identity map is denoted by $I$. Let $A$ be a
vector space with a binary operation $\circ$. Define linear maps
$L_{A,\circ}, R_{A,\circ}:A\rightarrow {\rm End}_{\bf k}(A)$
respectively by
\begin{eqnarray*}
L_{A,\circ}(a)b:=a\circ b,\;\; R_{A,\circ}(a)b:=b\circ a, \;\;\;a, b\in A.
\end{eqnarray*}
In particular, when $(A,\circ=[\cdot,\cdot])$ is a Lie algebra, we use the usual notion of the adjoint operator $\ad_A(a)(b):=[a,b]$ for all $a,b\in A$. Let $P={\bf k}[\partial]\otimes_{\bf k} A$ be a free ${\bf k }[\partial]$-module. For convenience, we denote the element $f(\partial)\otimes a\in P$ by $f(\partial)a$, where $f(\partial)\in {\bf k }[\partial]$ and $a\in A$.
\section{Preliminaries on Poisson conformal algebras}

We recall some basic facts on Poisson conformal algebras and
related structures.

\subsection{Poisson conformal algebras and Poisson-Gel'fand-Dorfman algebras}
\begin{defi}\cite{K1}
{\rm An {\bf associative conformal algebra} $(P, \cdot_\lambda
\cdot)$ is a ${\bf k}[\partial]$-module $P$ endowed with a ${\bf
k}$-bilinear map $\cdot_\lambda \cdot: P\times P\rightarrow
 P[\lambda]$ \delete{$(a, b)\mapsto
a_{\lambda} b$ \cm{whether such notation is necessary? If so, for
Lie conformal algebra, whether we also need such a notation?}}
satisfying the following conditions
\begin{eqnarray}
&&(\partial a)_{\lambda}b=-\lambda a_{\lambda}b,  \quad a_{\lambda}(\partial b)=(\partial+\lambda)a_{\lambda}b,\\
&&(a_{\lambda}b)_{\lambda+\mu}c=a_{\lambda}(b_\mu c),\;\;\;
a, b, c\in P.
\end{eqnarray}
An associative conformal algebra $(P, \cdot_\lambda \cdot)$ is
called {\bf commutative}, if it satisfies the following condition
\begin{eqnarray*}
a_\lambda b=b_{-\lambda-\partial}a, \;\;a, b\in P.
\end{eqnarray*}

A {\bf Lie conformal algebra} $(P, [\cdot_\lambda \cdot])$ is a
${\bf k}[\partial]$-module $P$ endowed with a ${\bf k}$-bilinear
map $[\cdot_\lambda \cdot]: P\times P\rightarrow  P[\lambda]$
satisfying the following conditions
\begin{eqnarray}
&&[(\partial a)_{\lambda}b]=-\lambda [a_{\lambda}b],  \quad [a_{\lambda}(\partial b)]=(\partial+\lambda)[a_{\lambda}b],\\
&&[a_\lambda b]=-[b_{-\lambda-\partial}a],\\
&&[a_\lambda[b_\mu c]]=[[a_\lambda b]_{\lambda+\mu} c]+[b_\mu[a_\lambda c]],\;\;\;a, b, c\in P.
\end{eqnarray}
}
\end{defi}

A Lie conformal algebra (or an associative conformal algebra) is called {\bf finite}, if it is finitely generated as a ${\bf k}[\partial]$-module.

\begin{defi}\cite{Ko4}
{\rm If $(P,[\cdot_\lambda\cdot])$ is a Lie conformal algebra, $(P,\cdot_\lambda\cdot)$ is a commutative associative conformal algebra,
and they satisfy the following condition
\begin{eqnarray}
[a_\lambda(b_\mu c)]=[a_\lambda b]_{\lambda+\mu}c+b_\mu[a_\lambda c],\;\;a, b, c\in P,
\end{eqnarray}
then $(P,[\cdot_\lambda\cdot],\cdot_\lambda\cdot)$ is called a {\bf Poisson conformal algebra}.}
\end{defi}

Recall that a {\bf left-symmetric algebra} $(A, \circ)$ is a
vector space with a binary operation $\circ$ satisfying the
following condition
\begin{eqnarray}
&&(a\circ b)\circ c-a\circ (b\circ c)=(b\circ a)\circ c-b\circ (a\circ c),\;\;a, b, c\in A.
\end{eqnarray}
A {\bf Novikov algebra} $(A, \circ)$ is a left-symmetric algebra
satisfying the following condition
\begin{eqnarray}
&&(a\circ b)\circ c=(a\circ c)\circ b, \;\;\;a, b, c\in A.
\end{eqnarray}

Recall \cite{X1} that a {\bf Gel'fand-Dorfman algebra (GD-algebra)} is a triple $(A, \circ, [\cdot,\cdot])$ where $(A, [\cdot,\cdot])$ is a Lie algebra, $(A,\circ)$ is a Novikov algebra
and the following compatibility condition holds.
\begin{eqnarray}\label{eqq3}
[a\circ b, c]-[a\circ c, b]+[a,b]\circ c-[a,c]\circ b-a\circ [b,c]=0, \;\;\;a, b, c\in A.
\end{eqnarray}

Recall that a {\bf Poisson algebra} is a triple $(A, \cdot, [\cdot,\cdot])$ where $(A, \cdot)$ is a commutative associative algebra, $(A, [\cdot,\cdot])$ is a Lie algebra and the following compatibility condition holds.
\begin{eqnarray*}
[a, b\cdot c]=[a,b]\cdot c+b\cdot [a,c],\;\;a, b, c\in A.
\end{eqnarray*}

\begin{defi} \cite{BCZ} A {\bf differential Novikov-Poisson algebra} is a triple $(A, \circ, \cdot)$ where $(A, \circ)$ is a Novikov algebra,
$(A, \cdot)$ is a commutative associative algebra and they satisfy the following conditions
\begin{eqnarray}
\label{eqNP1}&&(a\cdot b)\circ c=a\cdot (b\circ c),\\
\label{eqNP2}&&a\circ (b\cdot c)=(a\circ b)\cdot c+b\cdot (a\circ c),\;\; a, b, c\in A.
\end{eqnarray}
\end{defi}

\begin{rmk}
{\rm In fact, by Eqs. (\ref{eqNP1}) and (\ref{eqNP2}), we have
\begin{eqnarray}
\label{eqNP3}(a\circ b)\cdot c-(b\circ a)\cdot c=a\circ (b\cdot c)-b\circ (a\cdot c),\;\;\;a, b, c\in A.
\end{eqnarray}
Note that a triple $(A, \circ, \cdot)$, where $(A, \circ)$ is a
Novikov algebra, $(A, \cdot)$ is a commutative associative algebra
and they satisfy Eqs. (\ref{eqNP1}) and (\ref{eqNP3}), is called a
{\bf Novikov-Poisson algebra} in \cite{X2}. Therefore, a
differential Novikov-Poisson algebra is a special Novikov-Poisson
algebra.}
\end{rmk}

\begin{defi}\cite{LZ}
A {\bf Poisson-Gel'fand-Dorfman algebra (PGD-algebra)} is a quadruple $(A, \circ, \cdot, [\cdot,\cdot])$ such that $(A, \circ, [\cdot,\cdot])$ is a GD-algebra, $(A, \cdot, [\cdot,\cdot])$ is a Poisson algebra and $(A, \circ, \cdot)$ is a differential Novikov-Poisson algebra.
\end{defi}

Note that GD-algebras, Poisson algebras and differential
Novikov-Poisson algebras are special PGD-algebras. In particular,
$(A, \circ, \cdot, [\cdot,\cdot])$ with $[\cdot,\cdot]$ being
trivial is  a PGD-algebra if and only if $(A,\circ,\cdot)$ is a
differential Novikov-Poisson algebra.


\begin{pro}\cite[Proposition 2.13]{LZ}\label{constr-PCA} Let $A$
be a vector space. Let $P={\bf k}[\partial]\otimes_{\bf k} A$ be a free ${\bf
k}[\partial]$-module, and $\cdot$, $\circ$, $[\cdot,\cdot]$,
$\star$ be four binary operations on $A$. Define
\begin{eqnarray}
\label{wq2}[a_\lambda b]:=\partial(b\circ a)+\lambda(b\star
a)+[a,b],\;\;a_\lambda b:=a\cdot b,\ \;\; a,b\in A.
\end{eqnarray}
 Then
$(P,[\cdot_\lambda\cdot],\cdot_\lambda\cdot)$ is a Poisson
conformal algebra if and only if $(A, \circ, \cdot,
[\cdot,\cdot])$  is a PGD-algebra and $a\star b=a\circ b+b\circ a$
for all $a$, $b\in A$. In this case,
$(P,[\cdot_\lambda\cdot],\cdot_\lambda\cdot)$ is called the {\bf
Poisson conformal algebra corresponding to $(A, \circ, \cdot,
[\cdot,\cdot])$}.
\end{pro}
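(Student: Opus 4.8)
The plan is to separate the defining axioms of a Poisson conformal algebra on $P$ into three independent families---the commutative associative conformal structure of $(P,\cdot_\lambda\cdot)$, the Lie conformal structure of $(P,[\cdot_\lambda\cdot])$, and the Poisson conformal compatibility---and to match each family against the axioms of a PGD-algebra together with the relation $a\star b=a\circ b+b\circ a$. Throughout I use that $P$ is free over $\bfk[\partial]$, so that an identity in $P$ may be tested in each $\partial$-degree separately and $\partial$ may be cancelled when it is a common left factor. For the associative family, note that $a_\lambda b=a\cdot b$ has no dependence on $\lambda$ or $\partial$ when $a,b\in A$; hence the two sesquilinearity axioms merely record how $\cdot_\lambda\cdot$ is extended from $A$ to $P$ and hold automatically, while the commutativity axiom $a_\lambda b=b_{-\lambda-\partial}a$ collapses to $a\cdot b=b\cdot a$ and the associativity axiom $(a_\lambda b)_{\lambda+\mu}c=a_\lambda(b_\mu c)$ collapses to $(a\cdot b)\cdot c=a\cdot(b\cdot c)$. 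Thus this family is equivalent to $(A,\cdot)$ being a commutative associative algebra.

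For the Lie conformal family, I would substitute $[b_\mu a]=\partial(a\circ b)+\mu(a\star b)+[b,a]$ into the skew-symmetry axiom $[a_\lambda b]=-[b_{-\lambda-\partial}a]$, remembering that $\partial$ acts on the coefficients when $\mu$ is replaced by $-\lambda-\partial$; comparing the coefficient of $\lambda$ and the two $\partial$-degrees then gives exactly $a\star b=a\circ b+b\circ a$ and the antisymmetry of $[\cdot,\cdot]$. Expanding the conformal Jacobi identity $[a_\lambda[b_\mu c]]=[[a_\lambda b]_{\lambda+\mu}c]+[b_\mu[a_\lambda c]]$---using the sesquilinearity axioms to pull $\partial$ out of the inner brackets and then the bracket formula on each iterated bracket---and comparing the coefficients of the monomials in $\lambda,\mu$ reproduces the Novikov identities for $\circ$, the Jacobi identity for $[\cdot,\cdot]$, and the compatibility \eqref{eqq3}. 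This is precisely the known correspondence between GD-algebras and this class of Lie conformal algebras \cite{X1}, which I would cite rather than reprove; so this family is equivalent to $(A,\circ,[\cdot,\cdot])$ being a GD-algebra with $a\star b=a\circ b+b\circ a$.

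The genuinely new computation is the compatibility condition
\[
[a_\lambda(b_\mu c)]=[a_\lambda b]_{\lambda+\mu}c+b_\mu[a_\lambda c].
\]
Since $b_\mu c=b\cdot c$ is constant in $\mu$, the left-hand side is $\partial((b\cdot c)\circ a)+\lambda((b\cdot c)\star a)+[a,b\cdot c]$, and I would expand $[a_\lambda b]_{\lambda+\mu}c$ and $b_\mu[a_\lambda c]$ using the sesquilinearity of $\cdot_\lambda\cdot$, so that a $\partial$ sitting in an inner factor produces $-(\lambda+\mu)$ in the first term and $(\partial+\mu)$ in the second. Both sides are then affine in $(\lambda,\mu)$ with no $\lambda\mu$-term, so I would equate the coefficients of $\mu$, of $\lambda$, and of $1$, splitting the last by $\partial$-degree. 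The $\partial$-part of the constant coefficient yields $(b\cdot c)\circ a=b\cdot(c\circ a)$, i.e.\ \eqref{eqNP1}; its $\partial^0$-part yields the Poisson--Leibniz rule $[a,b\cdot c]=[a,b]\cdot c+b\cdot[a,c]$; the $\lambda$-coefficient, after substituting $a\star b=a\circ b+b\circ a$ and using \eqref{eqNP1}, yields \eqref{eqNP2}; and the $\mu$-coefficient reduces to a consequence of \eqref{eqNP1} and commutativity and is therefore redundant. Hence this family is equivalent to $(A,\circ,\cdot)$ being a differential Novikov-Poisson algebra together with the Poisson--Leibniz rule relating $[\cdot,\cdot]$ and $\cdot$.

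Collecting the three equivalences gives exactly the four ingredients assembling a PGD-algebra---$(A,\cdot)$ commutative associative, $(A,\circ,[\cdot,\cdot])$ a GD-algebra, $(A,\circ,\cdot)$ a differential Novikov-Poisson algebra, and the Poisson--Leibniz rule---together with $a\star b=a\circ b+b\circ a$, which is the assertion. I expect the only real obstacle to be the volume of bookkeeping in the Jacobi expansion of the Lie conformal family; this is why I would lean on the established GD-correspondence there and devote the direct verification to the compatibility condition, where the decomposition by powers of $\lambda,\mu$ and by $\partial$-degree makes each resulting identity transparent.
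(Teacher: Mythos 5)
Your proposal is correct, and it is worth noting at the outset that the paper itself offers no proof of this statement: Proposition \ref{constr-PCA} is imported verbatim from \cite[Proposition 2.13]{LZ}, so your blind argument is a genuine reconstruction rather than a rival to an in-paper proof. Your decomposition into the three families is exactly the right one, and each reduction checks out. The associative family does collapse as you say; skew-symmetry of the $\lambda$-bracket, after the substitution $\mu\mapsto-\lambda-\partial$ (with $\partial$ acting on coefficients), gives precisely $a\star b=a\circ b+b\circ a$ in the $\partial$-degree and $[a,b]=-[b,a]$ in the constant term; and leaning on the GD-correspondence of \cite{X1} for the conformal Jacobi identity is legitimate, since the present paper relies on that same correspondence. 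I verified your compatibility computation in detail: writing $[a_\lambda b]_{\lambda+\mu}c=-(\lambda+\mu)(b\circ a)\cdot c+\lambda(b\star a)\cdot c+[a,b]\cdot c$ and $b_\mu[a_\lambda c]=(\partial+\mu)\,b\cdot(c\circ a)+\lambda\,b\cdot(c\star a)+b\cdot[a,c]$, the $\partial$-coefficient gives Eq.~(\ref{eqNP1}), the constant term gives the Poisson--Leibniz rule, the $\mu$-coefficient $(b\circ a)\cdot c=b\cdot(c\circ a)$ is indeed redundant given (\ref{eqNP1}) and commutativity, and the $\lambda$-coefficient reads $(b\cdot c)\star a+(b\circ a)\cdot c=(b\star a)\cdot c+b\cdot(c\star a)$ --- the one delicate point being the cross-term $-(b\circ a)\cdot c$ contributed by $-(\lambda+\mu)$, without which one would wrongly derive an extra summand; with it, substituting $\star=\circ+\circ^{\mathrm{op}}$ and cancelling via (\ref{eqNP1}) yields exactly Eq.~(\ref{eqNP2}), as you claim. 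One hygiene remark: the axioms need to hold on all of $P$, not just on $A$, but since every axiom is compatible with sesquilinear extension from the free generators, verification on $A$ suffices; you use this implicitly and it would be worth one explicit sentence.
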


\begin{cor} Let $A$
be a vector space. Let $P={\bf k}[\partial]\otimes_{\bf k}A$ be a free ${\bf
k}[\partial]$-module, and $\cdot$, $\circ$, $\star$ be three
binary operations on $A$. Define
\begin{eqnarray}
[a_\lambda b]:=\partial(b\circ a)+\lambda(b\star a),\;\;a_\lambda b:=a\cdot b,\ \;\; a,b\in A.
\end{eqnarray}
Then $(P,[\cdot_\lambda\cdot],\cdot_\lambda\cdot)$ is a Poisson
conformal algebra if and only if $(A,\circ, \cdot)$ is a
differential Novikov-Poisson algebra and $a\star b=a\circ b+b\circ
a$ for all $a$, $b\in A$. In this case,
$(P,[\cdot_\lambda\cdot],\cdot_\lambda\cdot)$ is called the {\bf
Poisson conformal algebra corresponding to $(A,\circ, \cdot)$}.
\end{cor}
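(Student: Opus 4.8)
The plan is to obtain this corollary directly from Proposition~\ref{constr-PCA} by specializing the Lie algebra structure on $A$ to be trivial. First I would set $[a,b]:=0$ for all $a,b\in A$ in the hypotheses of Proposition~\ref{constr-PCA}. With this choice, the $\lambda$-bracket defined in~\eqref{wq2} becomes
\[
[a_\lambda b]=\partial(b\circ a)+\lambda(b\star a)+[a,b]=\partial(b\circ a)+\lambda(b\star a),
\]
which is exactly the bracket appearing in the statement of the corollary, while the commutative associative conformal product $a_\lambda b=a\cdot b$ is left unchanged. Thus the conformal structure in the corollary is precisely the one produced by Proposition~\ref{constr-PCA} applied to the quadruple $(A,\circ,\cdot,[\cdot,\cdot])$ with $[\cdot,\cdot]$ trivial.

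By Proposition~\ref{constr-PCA}, it then follows that $(P,[\cdot_\lambda\cdot],\cdot_\lambda\cdot)$ is a Poisson conformal algebra if and only if $(A,\circ,\cdot,[\cdot,\cdot])$ is a PGD-algebra and $a\star b=a\circ b+b\circ a$ for all $a,b\in A$. The remaining step is to identify a PGD-algebra with trivial Lie bracket. I would verify that when $[\cdot,\cdot]=0$ the defining conditions of a PGD-algebra collapse: the Poisson compatibility $[a,b\cdot c]=[a,b]\cdot c+b\cdot[a,c]$ and the Gel'fand--Dorfman compatibility~\eqref{eqq3} become $0=0$ automatically, while the GD-algebra and Poisson-algebra requirements reduce to saying that $(A,\circ)$ is a Novikov algebra and $(A,\cdot)$ is a commutative associative algebra. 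Consequently only the differential Novikov-Poisson conditions~\eqref{eqNP1} and~\eqref{eqNP2} carry nontrivial content, so $(A,\circ,\cdot,[\cdot,\cdot])$ with trivial bracket is a PGD-algebra if and only if $(A,\circ,\cdot)$ is a differential Novikov-Poisson algebra; this is exactly the observation recorded in the remark following the definition of PGD-algebras.

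Combining these two steps yields the stated equivalence. I do not expect any genuine obstacle here, since the argument is a pure specialization of Proposition~\ref{constr-PCA}; the only point demanding a moment's care is checking that setting $[\cdot,\cdot]=0$ imposes no hidden constraint beyond those of a differential Novikov-Poisson algebra, which is precisely what the collapse of the compatibility conditions above guarantees.
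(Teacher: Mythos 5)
Your proposal is correct and matches the paper's own argument: the paper proves this corollary by exactly the same specialization of Proposition~\ref{constr-PCA} to the case of a trivial Lie bracket, having already recorded (in the remark before Proposition~\ref{constr-PCA}) that a PGD-algebra $(A,\circ,\cdot,[\cdot,\cdot])$ with $[\cdot,\cdot]=0$ is precisely a differential Novikov-Poisson algebra $(A,\circ,\cdot)$. Your explicit check that the GD and Poisson compatibility conditions collapse when the bracket vanishes is the same observation, just spelled out in more detail.
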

\begin{proof}
It follows from Proposition \ref{constr-PCA} straightforwardly.
\end{proof}
\delete{
There is a natural construction of differential Novikov-Poisson algebras.
\begin{pro} \cite[Proposition 1]{Ko4}
Let $(A, \cdot)$ be a commutative associative algebra with a derivation $D$. Define a binary operation $\circ: A\otimes A\rightarrow A$ as follows
\begin{eqnarray*}
a\circ b:=a\cdot D(b),\;\;\;a, b\in A.
\end{eqnarray*}
Then $(A, \circ, \cdot)$ is a differential Novikov-Poisson algebra.
\end{pro}}

\subsection{Representations, matched pairs and Manin triples of Poisson conformal algebras}
\begin{defi}\cite{K1}
 {\rm Let $U$ and $V$ be two ${\bf k}[\partial]$-modules. A {\bf conformal linear map} from $U$ to $V$ is a ${\bf k}$-linear map $f: U\rightarrow V[\lambda]$, denoted by $f_\lambda: U\rightarrow V$, such that $[\partial, f_\lambda]=-\lambda f_\lambda$. Denote the ${\bf k}$-vector space of all such maps by $\text{Chom}(U,V)$. It has a canonical structure of a ${\bf k}[\partial]$-module
$$(\partial f)_\lambda =-\lambda f_\lambda.$$ Define the {\bf conformal dual} of a ${\bf k}[\partial]$-module $U$ as $U^{\ast c}=\text{Chom}(U,{\bf k})$, where ${\bf k}$ is viewed as the trivial ${\bf k}[\partial]$-module, that is
$$U^{\ast c}=\{f:U\rightarrow {\bf k}[\lambda]~~|~~\text{$f$ is ${\bf k}$-linear and}~~f_\lambda(\partial b)=\lambda f_\lambda b\}.$$}
\end{defi}

Let $V$ be a finitely generated ${\bf k}[\partial]$-module. Set
$\text{gc}(V):=\text{Chom}(V,V)$. 

\begin{defi}\cite{HB1}
{\rm A {\bf representation of a commutative associative conformal
algebra $(P, \cdot_\lambda \cdot)$} is a pair $(V, l_P)$, where
$V$ is a ${\bf k}[\partial]$-module and $l_P$ is a ${\bf
k}[\partial]$-module homomorphism from $P$ to $\text{gc}(V)$ such
that the following condition holds.
\begin{eqnarray}
l_P(a_\lambda b)_{\lambda+\mu}v=l_P(a)_\lambda (l_P(b)_\mu v),\;\;a, b\in P, v\in V.
\end{eqnarray}}
\end{defi}



\begin{defi}\cite{K1}
{\rm A {\bf representation of a Lie conformal algebra $(P,
[\cdot_\lambda \cdot])$} is a pair $(V, \rho_P)$, where $V$ is a
${\bf k}[\partial]$-module and $\rho_P$ is a ${\bf
k}[\partial]$-module homomorphism from $P$ to $\text{gc}(V)$ such
that the following condition holds.
\begin{eqnarray}
\rho_P([a_\lambda b])_{\lambda+\mu}v=\rho_P(a)_\lambda (\rho_P(b)_\mu v)-\rho_P(b)_\mu(\rho_P(a)_\lambda v),\;\;\;a, b\in P, v\in V.
\end{eqnarray}
Denote the {\bf adjoint representation} of $(P, [\cdot_\lambda
\cdot])$ by $(P,\mathfrak{ad}_P)$, where $\mathfrak{ad}_P(a)_\lambda b=[a_\lambda b]$,
for all $a$, $b\in P$.}
\end{defi}


\begin{defi}\cite{LZ}
{\rm  A {\bf representation of a Poisson conformal algebra
$(P,[\cdot_\lambda \cdot],\cdot_\lambda\cdot)$} is a triple $(V,
\rho_P, l_P)$ such that $(V,\rho_P)$ is a representation of
$(P,[\cdot_\lambda \cdot])$ and
 $(V,l_P)$ is a representation of $(P,\cdot_\lambda \cdot)$
 satisfying the following conditions.
 \begin{eqnarray}
 &&\label{eq-PM1}{\rho_P(a_\mu b)}_{-\lambda-\partial}v={l_P(b)}_{-\lambda-\mu-\partial}({\rho_P(a)}_{-\lambda-\partial} v)+{l_P(a)}_\mu(\rho_P(b)_{-\lambda-\partial}v),\\
 &&\label{eq-PM2}{\rho_P(a)}_\lambda ({l_P(b)}_\mu v)=l_P([a_\lambda b])_{\lambda+\mu}v+{l_P(b)}_\mu({\rho_P(a)}_\lambda v),\;\;\;\;a, b\in P, v\in V.
 \end{eqnarray}
}
\end{defi}

\begin{pro}\label{Poisson-dual-module}
Let $(V,\rho_P, l_P)$ be a representation of a Poisson conformal
algebra $(P,[\cdot_\lambda \cdot],\cdot_\lambda\cdot)$, in which $V$ and
$P$ are free as ${\bf k}[\partial]$-modules. 
Let $\rho_P^\ast$ and $l_P^\ast$ be two ${\bf
k}[\partial]$-module homomorphisms from $P$ to $\text{gc}(V^{\ast
c})$ defined respectively by
\begin{eqnarray*}
({\rho_P^\ast(a)}_\lambda f)_\mu u=-f_{\mu-\lambda}({\rho_P(a)}_\lambda u),~~~({l_P^\ast(a)}_\lambda f)_\mu u=-f_{\mu-\lambda}({l_P(a)}_\lambda u),\;\;\;a\in P, f\in V^{\ast c}.
\end{eqnarray*}
Then $(V^{\ast c}, \rho_P^\ast, -l_P^\ast)$ is a representation of $(P,[\cdot_\lambda \cdot],\cdot_\lambda\cdot)$.
\end{pro}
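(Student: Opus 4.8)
The plan is to verify directly the three conditions in the definition of a representation of a Poisson conformal algebra for the triple $(V^{\ast c},\rho_P^\ast,-l_P^\ast)$: that $(V^{\ast c},\rho_P^\ast)$ is a representation of the Lie conformal algebra $(P,[\cdot_\lambda\cdot])$, that $(V^{\ast c},-l_P^\ast)$ is a representation of the commutative associative conformal algebra $(P,\cdot_\lambda\cdot)$, and that the compatibility conditions \eqref{eq-PM1} and \eqref{eq-PM2} hold. In each case the method is the same: since $V$ is free as a ${\bf k}[\partial]$-module, the conformal pairing $V^{\ast c}\times V\to {\bf k}[\lambda]$ is nondegenerate, so an identity of conformal operators on $V^{\ast c}$ may be checked by evaluating $(\cdots)_\nu u$ against an arbitrary $u\in V$ and an auxiliary parameter $\nu$, and then transposing through the defining formulas for $\rho_P^\ast$ and $l_P^\ast$ to land on the corresponding identity for $(V,\rho_P,l_P)$.

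The Lie conformal part is the standard contragredient-module construction (cf.\ \cite{L}), so $(V^{\ast c},\rho_P^\ast)$ is a representation of $(P,[\cdot_\lambda\cdot])$. For the associative part, transposing the desired identity $-l_P^\ast(a_\lambda b)_{\lambda+\mu}f=(-l_P^\ast(a))_\lambda\big((-l_P^\ast(b))_\mu f\big)$ reduces, after pairing with $u$, to $l_P(a)_\lambda(l_P(b)_\mu u)=l_P(b)_\mu(l_P(a)_\lambda u)$, i.e.\ the two left multiplications commute. This in turn follows from the representation axiom together with the conformal commutativity $a_\lambda b=b_{-\lambda-\partial}a$ of $P$: writing $b_\gamma a=\sum_n\gamma^n c_n$ and substituting $\gamma\mapsto -\lambda-\partial$ inside $l_P(\cdot)_{\lambda+\mu}$, the relation $l_P(\partial c)_\sigma=-\sigma\,l_P(c)_\sigma$ collapses $-\lambda-\partial$ to $\mu$ and yields $l_P(a_\lambda b)_{\lambda+\mu}u=l_P(b_\mu a)_{\lambda+\mu}u$. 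Because here one factor $l_P^\ast$ is transposed against two on the other side, the overall sign no longer cancels, and this is precisely what forces the choice $-l_P^\ast$ rather than $l_P^\ast$.

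The substance of the proof is the verification of \eqref{eq-PM1} and \eqref{eq-PM2} for $(V^{\ast c},\rho_P^\ast,-l_P^\ast)$. Condition \eqref{eq-PM2} is the milder one: each of its three terms carries exactly one factor $l_P^\ast$, so after pairing with $u$ the sign coming from $-l_P^\ast$ is common to all terms and cancels, and the identity collapses to the original \eqref{eq-PM2} with $v=u$. Condition \eqref{eq-PM1} is the crux and the main obstacle: here the spectral shifts $-\lambda-\partial$ and $-\lambda-\mu-\partial$ act through $\partial_{V^{\ast c}}$, and transposing them onto $V$ requires careful bookkeeping via the sesquilinearity relation $f_\lambda(\partial u)=\lambda f_\lambda u$ of the conformal dual, which re-expresses each substitution $-\lambda-\partial$ on $V^{\ast c}$ as the matching substitution on $V$ under the pairing. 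Tracking these substitutions and the accompanying signs is the only delicate point; once it is done, \eqref{eq-PM1} for the dual reduces exactly to \eqref{eq-PM1} for $(V,\rho_P,l_P)$.

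Having reduced all three conditions to the defining identities of the original representation, and invoking the nondegeneracy of the pairing to pass from the evaluated identities back to identities of conformal operators on $V^{\ast c}$, we conclude that $(V^{\ast c},\rho_P^\ast,-l_P^\ast)$ is a representation of $(P,[\cdot_\lambda\cdot],\cdot_\lambda\cdot)$. I expect the $\partial$-substitution bookkeeping in \eqref{eq-PM1} to be the only genuinely laborious step, the remainder being formal.
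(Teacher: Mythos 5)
Your proposal is correct and follows essentially the same route as the paper: the paper likewise quotes the known dual-module statements for the Lie part (\cite{K1}) and for the associative part (\cite[Proposition 2.5]{HB1}, which encapsulates your sign analysis and commuting-left-multiplications argument), and then verifies \eqref{eq-PM1} and \eqref{eq-PM2} for $(V^{\ast c},\rho_P^\ast,-l_P^\ast)$ by evaluating against an arbitrary $u\in V$ at an auxiliary parameter $\nu$ and transposing through the defining formulas, exactly as you plan. The only cosmetic difference is that the paper writes out the $\partial$-substitution computation for \eqref{eq-PM1} and dispatches \eqref{eq-PM2} with ``similarly,'' matching your assessment that the spectral-shift bookkeeping in \eqref{eq-PM1} is the one genuinely laborious step.
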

\begin{proof}
Note that $(V^{\ast c}, \rho_P^\ast)$ is a representation of $(P,
[\cdot_\lambda \cdot])$ \cite{K1} and $(V^{\ast c}, -l_P^\ast)$ is
a representation of $(P, \cdot_\lambda\cdot)$ by \cite[Proposition
2.5]{HB1}. Let $a$, $b\in P$, $f\in V^{\ast c}$ and $u\in V$. Then
we have
\begin{eqnarray*}
&&({\rho_P^\ast(a_\mu b)}_{-\lambda-\partial}f+{l_P^\ast(b)}_{-\lambda-\mu-\partial}(\rho_P^\ast(a)_{-\lambda-\partial}f)
+l_P^\ast(a)_\mu(\rho_P^\ast(b)_{-\lambda-\partial}f))_\nu u\\
&=&-f_\lambda(\rho_P(a_\mu b)_{-\lambda+\nu}u-\rho_P(a)_\mu(l_P(b)_{-\lambda-\mu+\nu}u)-\rho_P(b)_{-\lambda-\mu+\nu}(l_P(a)_\mu u))\\
&=&-f_\lambda(\rho_P(a_\mu b)_{\nu-\partial}u-l_P(b)_{\nu-\mu-\partial}(\rho_P(a)_{\nu-\partial}u)-l_P(a)_\mu(\rho_P(b)_{\nu-\partial}u))\\
&=& 0.
\end{eqnarray*}
Therefore, ${\rho_P^\ast(a_\mu
b)}_{-\lambda-\partial}f+{l_P^\ast(b)}_{-\lambda-\mu-\partial}(\rho_P^\ast(a)_{-\lambda-\partial}f)+l_P^\ast(a)_\mu(\rho_P^\ast(b)_{-\lambda-\partial}f)=0$.
Similarly, we show that ${\rho_P^\ast(a)}_\lambda
({l_P^\ast(b)}_\mu f)=l_P^\ast([a_\lambda
b])_{\lambda+\mu}f+{l_P^\ast(b)}_\mu({\rho_P^\ast(a)}_\lambda f)$.
Then the proof is completed.\end{proof}
\begin{ex}
Let $(P,[\cdot_\lambda \cdot],\cdot_\lambda\cdot)$ be a Poisson
conformal algebra in which $P$ is free as a ${\bf
k}[\partial]$-module. Set ${\mathfrak{L}_P(a)}_\lambda b=a_\lambda b$ for all
$a$, $b\in P$. Then $(P, \mathfrak{ad}_P, \mathfrak{L}_P)$ is a representation of
$(P,[\cdot_\lambda \cdot],\cdot_\lambda\cdot)$. By Proposition
\ref{Poisson-dual-module}, $(P^{\ast c}, \mathfrak{ad}_P^\ast, -\mathfrak{L}_P^\ast)$
is also a representation of $(P,[\cdot_\lambda
\cdot],\cdot_\lambda\cdot)$.
\end{ex}

Next, we recall the notions of matched pairs of commutative
associative conformal algebras and Lie conformal algebras and then
give the notion of matched pair of Poisson conformal algebras.

\begin{defi} \cite{H1}
Let $(P,\cdot_\lambda \cdot)$ and $(Q,\cdot_\lambda \cdot)$ be two commutative associative conformal algebras. Let $l_P: P\rightarrow \text{gc}(Q)$ and $l_Q: Q\rightarrow \text{gc}(P)$ be ${\bf k}[\partial]$-module homomorphisms. If there is a commutative associative conformal algebra structure on the ${\bf k}[\partial]$-module $P\oplus Q$ as the direct sum of ${\bf k}[\partial]$-modules given by
\begin{eqnarray}\label{ass-matched pair}
(a+x)_\lambda (b+y)=(a_\lambda b+l_Q(x)_\lambda
b+l_Q(y)_{-\lambda-\partial} a) +(x_\lambda y+l_P(a)_\lambda
y+l_P(b)_{-\lambda-\partial} x),
\end{eqnarray}
for all $a,b\in P$ and $x,y\in Q$, then $(P, Q,l_P, l_Q)$ is
called a {\bf matched pair  of commutative associative conformal
algebras}.
\end{defi}
\begin{defi} \cite{HL}
Let $(P, [\cdot_\lambda \cdot])$ and $(Q, [\cdot_\lambda \cdot])$ be two Lie conformal algebras. Let $\rho_P: P\rightarrow \text{gc}(Q)$ and $\rho_Q: Q\rightarrow \text{gc}(P)$ be ${\bf k}[\partial]$-module homomorphisms.
If there is a Lie conformal algebra structure on the ${\bf k}[\partial]$-module $P\oplus Q$ as the direct sum of ${\bf k}[\partial]$-modules given by
\begin{eqnarray}\label{107}
&&[(a+x)_\lambda (b+y)]=([a_\lambda b]+\rho_Q(x)_\lambda b-\rho_Q(y)_{-\lambda-\partial}a)+([x_\lambda y]+\rho_P(a)_\lambda y-\rho_P(b)_{-\lambda-\partial} x),
\end{eqnarray}
for all $a$, $b\in P$ and $x$, $y\in Q$, then $(P,Q,\rho_P,\rho_Q)$ is called a {\bf matched pair of Lie conformal algebras}.
\end{defi}
\delete{\begin{pro}\label{pro1} {\rm (\cite[Proposition 4.4]{H1})}
Let $(A,\cdot_\lambda \cdot)$ and $(B,\cdot_\lambda \cdot)$ be two commutative associative conformal algebras. Suppose that there are ${\bf k}[\partial]$-module homomorphisms $l_A: A\rightarrow \text{gc}(B)$ and $l_B: B\rightarrow \text{gc}(A)$ such that $(B, l_A)$ is a module of $(A,\cdot_\lambda \cdot)$ and $(A, l_B)$ is a module of $(B,\cdot_\lambda \cdot)$ and they satisfy the following compatibility conditions
\begin{eqnarray}
\label{es1}&&l_A(a)_\lambda(x_\mu y)=(l_A(a)_\lambda x)_{\lambda+\mu}y
+l_A(r_B(x)_{-\lambda-\partial}a)_{\lambda+\mu}y,\\
\label{es3}&&l_B(x)_\lambda(a_\mu b)=(l_B(x)_\lambda a)_{\lambda+\mu} b
+l_B(r_A(a)_{-\lambda-\partial}x)_{\lambda+\mu}b,\;\;\;a, b\in A, x, y\in B.
\end{eqnarray}
Then there is a commutative associative conformal algebra structure on the ${\bf k}[\partial]$-module $A\oplus B$ as the direct sum of ${\bf k}[\partial]$-modules given by
\begin{eqnarray}\label{ass-matched pair}
(a+x)_\lambda (b+y)=(a_\lambda b+l_B(x)_\lambda
b+l_B(y)_{-\lambda-\partial} a) +(x_\lambda y+l_A(a)_\lambda
y+l_A(b)_{-\lambda-\partial} x),\end{eqnarray} for all $a,b\in A$
and $x,y\in B$. We denote this commutative associative conformal algebra by
$A\bowtie B$. $(A, B,l_A, l_B)$ satisfying the above
conditions is called a {\bf matched pair } of commutative associative conformal
algebras. \delete{Moreover, any commutative associative conformal algebra $E=A\oplus
B$ where the sum is the direct sum of
$\mathbb{C}[\partial]$-modules and $A$, $B$ are two commutative associative
conformal subalgebras of $E$, is $A\bowtie B$ associated to some
matched pair of commutative associative conformal algebras.}
\end{pro}}
\delete{\begin{pro} \cite[Theorem 3.2]{HL}
Let $(A, [\cdot_\lambda \cdot])$ and $(B, [\cdot_\lambda \cdot])$ be two Lie conformal algebras. Suppose that $(B,\rho_A)$ ia a module of $(A, [\cdot_\lambda \cdot])$,  $(A,\rho_B)$ is a module of $(B, [\cdot_\lambda \cdot])$, and they satisfy the following compatibility conditions:
\begin{eqnarray}\label{108}
&&\rho_A(a)_\lambda[x_\mu y]-[(\rho_A(a)_\lambda x)_{\lambda+\mu}y]-[x_\mu(\rho_A(a)_\lambda y)]+\rho_A({\rho_B(x)}_{-\lambda-\partial}a)_{\lambda+\mu}y\\
&&\qquad\qquad-\rho_A(\rho_B(y)_{-\lambda-\partial}a)_{-\mu-\partial}x=0,\nonumber\\
\label{109}
&&{\rho_B(x)}_{-\lambda-\mu-\partial}[a_\lambda b]-[a_\lambda ({\rho_B(x)}_{-\mu-\partial}b)]+[b_\mu({\rho_B(x)}_{-\lambda-\partial}a)]\\
&&\qquad\qquad+\rho_B(\rho_A(a)_{\lambda}x)_{-\mu-\partial}b
-\rho_B(\rho_A(b)_{\mu}x)_{-\lambda-\partial}a=0,\;\;a, b\in A, x, y\in B.\nonumber
\end{eqnarray} Then there is a Lie conformal algebra structure on the ${\bf k}[\partial]$-module $A\oplus B$ given by
\begin{eqnarray}\label{107}
[(a+x)_\lambda (b+y)]=([a_\lambda b]+\rho_B(x)_\lambda b-\rho_B(y)_{-\lambda-\partial}a)+([x_\lambda y]+\rho_A(a)_\lambda y-\rho_A(b)_{-\lambda-\partial} x),
\end{eqnarray}
for all $a$, $b\in A$ and $x$, $y\in B$. We denote this Lie conformal algebra by $A\bowtie B$. $(A,B,\rho_A,\rho_B)$ satisfying the above conditions is called a {\bf matched pair of Lie conformal algebras}.
\end{pro}}

\begin{defi}
Let $(P, [\cdot_\lambda \cdot], \cdot_\lambda \cdot)$ and  $(Q,
[\cdot_\lambda \cdot], \cdot_\lambda \cdot)$ be two Poisson
conformal algebras. Let $\rho_{P}$, $l_{P}: P\rightarrow
\text{gc}(Q)$ and $\rho_{Q}$, $l_{Q}: Q\rightarrow \text{gc}(P)$
be four ${\bf k}[\partial]$-module homomorphisms. If there is a
Poisson conformal algebra structure on the ${\bf
k}[\partial]$-module $P\oplus Q$ as the direct sum of ${\bf
k}[\partial]$-modules defined by Eqs. (\ref{ass-matched pair}) and
(\ref{107}), then $(P,Q,\rho_P, l_P, \rho_Q, l_Q)$ is called a
{\bf matched pair of Poisson conformal algebras}. We denote this
Poisson conformal algebra by
$P\bowtie_{\rho_P,l_P}^{\rho_Q,l_Q}Q$.
\end{defi}
\begin{pro}\label{pro:match pair} The 6-tuple
$(P,Q,\rho_P, l_P, \rho_Q, l_Q)$ is  a matched pair of Poisson conformal algebras if and only if
$(P, Q, \rho_P, \rho_Q)$ is a matched pair of Lie conformal algebras, $(P, Q, l_P, l_Q)$ is a matched pair of commutative associative conformal algebras,  $(P, \rho_Q, l_Q)$ is a representation of $(Q, [\cdot_\lambda \cdot], \cdot_\lambda \cdot)$, $(Q, \rho_P, l_P)$ is a representation of $(P, [\cdot_\lambda \cdot], \cdot_\lambda \cdot)$, and they satisfy the following compatibility conditions
\begin{eqnarray}
&&\label{poisson-matched-1}{l_Q(x)}_{-\lambda-\mu-\partial}([a_\lambda b])=[a_\lambda ({l_Q(x)}_{-\mu-\partial}b)]+b_\mu({\rho_Q(x)}_{-\lambda-\partial} a)\\
&&\qquad\qquad\qquad\quad\quad-\rho_Q({l_P(b)}_\mu x)_{-\lambda-\partial}a-l_Q({\rho_P(a)}_\lambda x)_{-\mu-\partial} b,\nonumber\\
&&\label{poisson-matched-2}{\rho_Q(x)}_\lambda (a_\mu b)=({\rho_Q(x)}_\lambda a)_{\lambda+\mu} b+a_\mu ({\rho_Q(x)}_\lambda b)\\
&&\qquad\qquad\qquad-l_Q({\rho_P(a)}_{-\lambda-\partial}x)_{\lambda+\mu}b-l_Q(\rho_P(b)_{-\lambda-\partial}x)_{-\mu-\partial}a,\nonumber\\
&&\label{poisson-matched-3}{l_P(a)}_\mu([x_\mu y])=[x_\lambda({l_P(a)}_\mu y)]+({\rho_P(a)}_{-\lambda-\partial}x)_{\lambda+\mu}y\\
&&\qquad\qquad\qquad-\rho_P({l_Q(y)}_{-\mu-\partial}a)_{-\lambda-\partial}x-l_P({\rho_Q(x)}_\lambda a)_{\lambda+\mu}y,\nonumber\\
&&\label{poisson-matched-4}{\rho_P(a)}_\lambda (x_\mu y)=({\rho_P(a)}_\lambda x)_{\lambda+\mu}y+x_\mu({\rho_P(a)}_\lambda y)\\
&&\qquad\qquad\qquad-l_P({\rho_Q(x)}_{-\lambda-\partial}
a)_{\lambda+\mu}y-l_P({\rho_Q(y)}_{-\lambda-\partial}a)_{-\mu-\partial}
x,\;\;a, b\in P, x, y\in Q.\nonumber
\end{eqnarray}
 Moreover, any Poisson conformal algebra that is decomposed into a ${\bf k}[\partial]$-module direct sum of two Poisson conformal subalgebras is obtained from a matched pair of Poisson conformal algebras.
\end{pro}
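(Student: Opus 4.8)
The plan is to view the asserted Poisson conformal algebra structure on the ${\bf k}[\partial]$-module $E:=P\oplus Q$ as the superposition of three ingredients: the conformal bracket $[\cdot_\lambda\cdot]$ defined by \eqref{107}, the conformal product $\cdot_\lambda\cdot$ defined by \eqref{ass-matched pair}, and the Poisson compatibility tying them together. Since the 6-tuple is, by definition, a matched pair of Poisson conformal algebras exactly when $E$ equipped with these two operations is a Poisson conformal algebra, I would verify the Lie conformal axioms, the commutative associative conformal axioms, and the Poisson compatibility separately. The first two pieces are essentially definitional: by the definitions recalled above, $(E,[\cdot_\lambda\cdot])$ is a Lie conformal algebra exactly when $(P,Q,\rho_P,\rho_Q)$ is a matched pair of Lie conformal algebras, and $(E,\cdot_\lambda\cdot)$ is a commutative associative conformal algebra exactly when $(P,Q,l_P,l_Q)$ is a matched pair of commutative associative conformal algebras. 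These two also already supply the Lie conformal modules $(Q,\rho_P)$, $(P,\rho_Q)$ and the associative conformal modules $(Q,l_P)$, $(P,l_Q)$, so only the Poisson compatibility $[u_\lambda(v_\mu w)]=[u_\lambda v]_{\lambda+\mu}w+v_\mu[u_\lambda w]$ on $E$ remains to be analyzed.

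Next I would exploit ${\bf k}$-trilinearity of this compatibility in $(u,v,w)$ and test it on homogeneous triples, with each argument taken from $P$ or from $Q$, producing eight cases. Substituting \eqref{107} and \eqref{ass-matched pair} and projecting each identity onto its $P$- and $Q$-components, I expect the case $(u,v,w)\in P\times P\times P$ to reproduce the Poisson compatibility of $P$ and the case in $Q\times Q\times Q$ that of $Q$, both holding by hypothesis. The six mixed cases are where the content lies: after using the skew-symmetry of the bracket and the conformal commutativity $a_\lambda b=b_{-\lambda-\partial}a$ to reorient every term, I expect them to collapse to precisely the module-compatibility identities \eqref{eq-PM1} and \eqref{eq-PM2} for $(Q,\rho_P,l_P)$ over $P$ and for $(P,\rho_Q,l_Q)$ over $Q$, together with the four cross conditions \eqref{poisson-matched-1}--\eqref{poisson-matched-4}. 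Combined with the module structures from the two matched pairs, this promotes $(Q,\rho_P,l_P)$ and $(P,\rho_Q,l_Q)$ to genuine representations of the Poisson conformal algebras $P$ and $Q$, yielding the stated equivalence. Roughly, I anticipate that the placements with the outer bracket argument in $Q$ and the inner ones in $P$ produce \eqref{poisson-matched-1}, those with the outer argument in $P$ and the inner ones in $Q$ produce \eqref{poisson-matched-3} and \eqref{poisson-matched-4}, and the split placements produce \eqref{poisson-matched-2} together with the companion representation identities.

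For the last sentence I would argue conversely: given a Poisson conformal algebra $E$ that is a ${\bf k}[\partial]$-module direct sum $P\oplus Q$ of two Poisson conformal subalgebras, I would define $\rho_P(a)_\lambda x$ and $l_P(a)_\lambda x$ to be the $Q$-components of $[a_\lambda x]$ and $a_\lambda x$, and symmetrically define $\rho_Q,l_Q$ via the $P$-components, for $a\in P$ and $x\in Q$. Since $P$ and $Q$ are subalgebras, the diagonal products $[a_\lambda b]$, $a_\lambda b$ remain in $P$ and $[x_\lambda y]$, $x_\lambda y$ remain in $Q$, so the only off-diagonal contributions are the ones recorded by these four maps; expanding $[(a+x)_\lambda(b+y)]$ and $(a+x)_\lambda(b+y)$ and collecting components then recovers exactly \eqref{107} and \eqref{ass-matched pair}, exhibiting $E$ as $P\bowtie_{\rho_P,l_P}^{\rho_Q,l_Q}Q$.

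The hard part will be the bookkeeping in the middle step: tracking the shifted conformal variables (the arguments $-\lambda-\partial$, $-\mu-\partial$, and $-\lambda-\mu-\partial$) through each of the six mixed cases and matching the resulting components against the precise forms of \eqref{eq-PM1}, \eqref{eq-PM2} and \eqref{poisson-matched-1}--\eqref{poisson-matched-4}. The conformal commutativity and skew-symmetry relations are exactly what is needed to move factors such as $l_Q(y)_{-\lambda-\partial}a$ into the orientation used in the statement, and care will be required to confirm that no further independent identities appear and that the four listed conditions are genuinely not redundant with the representation axioms already extracted from the two matched pairs.
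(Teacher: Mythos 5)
Your proposal is correct and is precisely the direct verification the paper intends: its entire proof of this proposition is ``It is straightforward,'' meaning exactly the expansion you describe, where the Lie and associative matched-pair conditions are definitional and the Leibniz identity on $P\oplus Q$, tested on homogeneous triples and projected onto $P$- and $Q$-components, yields the representation conditions \eqref{eq-PM1}--\eqref{eq-PM2} for $(Q,\rho_P,l_P)$ and $(P,\rho_Q,l_Q)$ together with \eqref{poisson-matched-1}--\eqref{poisson-matched-4}. Your converse argument for the final sentence (reading off $\rho_P,l_P,\rho_Q,l_Q$ from the off-diagonal components of the bracket and product) is likewise the standard and intended one.
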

\begin{proof}
It is straightforward.
\end{proof}

\begin{rmk}
If $\rho_Q$, $l_Q$ and the Poisson conformal algebra structure on
$Q$ are trivial, then $P\bowtie_{\rho_P,l_P}^{\rho_Q,l_Q}Q$ is
exactly the {\bf semi-direct product} of $(P, [\cdot_\lambda
\cdot], \cdot_\lambda \cdot)$ and the representation $(Q, \rho_P,
l_P)$, which is denoted by $P\ltimes_{\rho_P,l_P} Q$.
\end{rmk}

At the end of this section, we introduce the notion of Manin
triples of Poisson conformal algebras and interpret them in terms
of matched pairs of Poisson conformal algebras.

 \begin{defi}\cite{L} {
\rm Let $V$ be a ${\bf k}[\partial]$-module. A {\bf conformal
bilinear form} on $V$ is a ${\bf k}$-bilinear map $\langle \cdot,
\cdot \rangle_\lambda: V\times V\rightarrow {\bf k}[\lambda]$
satisfying the following conditions
\begin{eqnarray}
\langle \partial a, b\rangle_\lambda=-\lambda\langle a,
b\rangle_\lambda, ~~~\langle a, \partial
b\rangle_\lambda=\lambda\langle a, b\rangle_\lambda,\;\;a,b\in V.
\end{eqnarray}
A conformal bilinear form is called {\bf symmetric} if $\langle
a,b\rangle_\lambda=\langle b,a\rangle_{-\lambda}$ for all $a$,
$b\in V$. }
\end{defi}

\begin{defi}
A {\bf Manin triple} of Poisson conformal algebras is a triple of
Poisson conformal algebras $(P, P_1, P_2)$ where $P$ is equipped
with a nondegenerate symmetric conformal bilinear form $\langle
\cdot, \cdot \rangle_\lambda$ which is also {\bf invariant} in the
sense that
\begin{eqnarray*}
\langle[a_\lambda b], c\rangle_\mu=\langle a, [b_{\mu-\partial} c]\rangle_\lambda,~~~\langle a_\lambda b, c\rangle_\mu=\langle a, b_{\mu-\partial} c\rangle_\lambda,\;\;a, b, c\in P,
\end{eqnarray*}
such that
\begin{enumerate}
\item[(1)] $P_1$ and $P_2$ are Poisson conformal subalgebras of
$P$ and $P=P_1\oplus P_2$ as ${\bf k}[\partial]$-modules;
\item[(2)] $P_1$ and $P_2$ are isotropic with respect to $\langle
\cdot, \cdot \rangle_\lambda$, i.e. $\langle P_i, P_i
\rangle_\lambda=0$ for $i=0$, $1$.
\end{enumerate}

\end{defi}
\begin{thm}\label{poisson-thm1}
Let $(P, [\cdot_\lambda \cdot], \cdot_\lambda \cdot)$ be a Poisson
conformal algebra in which $P$ is free as a ${\bf
k}[\partial]$-module. Suppose that $(P^{\ast c}, [\cdot_\lambda
\cdot], \cdot_\lambda \cdot)$ is also a Poisson conformal algebra.
Then $(P\oplus P^{\ast c}, P, P^{\ast c})$ is a Manin triple of
Poisson conformal algebras with the invariant symmetric conformal bilinear
form $\langle \cdot, \cdot\rangle_\lambda$ on $P\oplus P^{\ast c}$
given by
\begin{eqnarray}\label{poisson-inv}
\langle a+f, b+g\rangle_\lambda=f_\lambda(b)+g_{-\lambda}(a),\;\;\;\;a, b\in P, ~~f, g\in P^{\ast c},
\end{eqnarray}
if and only if $(P, P^{\ast c}, \mathfrak{ad}_P^{\ast}, -\mathfrak{L}_P^\ast, \mathfrak{ad}_{P^{\ast c}}^\ast, -\mathfrak{L}_{P^{\ast c}}^\ast)$ is a matched pair of Poisson conformal algebras.
\end{thm}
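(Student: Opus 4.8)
The form \eqref{poisson-inv} is manifestly nondegenerate — it is the canonical pairing of $P$ with its conformal dual $P^{\ast c}$ — and it is symmetric, since $\langle a+f,b+g\rangle_\lambda=f_\lambda(b)+g_{-\lambda}(a)=\langle b+g,a+f\rangle_{-\lambda}$. Moreover $P$ and $P^{\ast c}$ are isotropic, because $\langle\cdot,\cdot\rangle_\lambda$ vanishes on each summand. Hence the only substantive content in the notion of a Manin triple here is the invariance of $\langle\cdot,\cdot\rangle_\lambda$, and my plan is to reduce the whole statement to Proposition \ref{pro:match pair} and then to show that, for the resulting matched pair, invariance is equivalent to the two actions being the coadjoint-type representations of Proposition \ref{Poisson-dual-module}.

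For the forward implication, if $(P\oplus P^{\ast c},P,P^{\ast c})$ is a Manin triple, then $P\oplus P^{\ast c}$ is a Poisson conformal algebra that is the ${\bf k}[\partial]$-module direct sum of its Poisson conformal subalgebras $P$ and $P^{\ast c}$; by the last assertion of Proposition \ref{pro:match pair} it arises from some matched pair $(P,P^{\ast c},\rho_P,l_P,\rho_{P^{\ast c}},l_{P^{\ast c}})$, whose four structure maps are precisely the off-diagonal components of the bracket \eqref{107} and product \eqref{ass-matched pair} on $P\oplus P^{\ast c}$. It then remains to identify these maps with the dual representations using invariance. Conversely, once the stated 6-tuple is known to be a matched pair, $P\oplus P^{\ast c}$ is a Poisson conformal algebra into which $P$ and $P^{\ast c}$ embed as isotropic subalgebras, and only invariance needs checking. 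Thus both directions rest on a single computation: that invariance of \eqref{poisson-inv} holds if and only if $\rho_P=\mathfrak{ad}_P^\ast$, $l_P=-\mathfrak{L}_P^\ast$, $\rho_{P^{\ast c}}=\mathfrak{ad}_{P^{\ast c}}^\ast$ and $l_{P^{\ast c}}=-\mathfrak{L}_{P^{\ast c}}^\ast$.

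To carry out that computation I would expand the two invariance identities $\langle[u_\lambda v],w\rangle_\mu=\langle u,[v_{\mu-\partial}w]\rangle_\lambda$ and $\langle u_\lambda v,w\rangle_\mu=\langle u,v_{\mu-\partial}w\rangle_\lambda$ over all choices of $u,v,w$ in $P$ or $P^{\ast c}$, using \eqref{107}, \eqref{ass-matched pair} and \eqref{poisson-inv}. When $u,v,w$ lie in a single summand both sides vanish by isotropy, so only the mixed triples carry information. Taking $u=a$, $v=b\in P$ and $w=g\in P^{\ast c}$ in the bracket identity, and using both that the pairing converts the substitution $\mu-\partial$ into $\mu-\lambda$ (via $f_\lambda(\partial c)=\lambda f_\lambda(c)$) and the conformal skew-symmetry $[a_\lambda b]=-[b_{-\lambda-\partial}a]$, the identity collapses to $(\rho_P(a)_\lambda g)_\nu c=-g_{\nu-\lambda}([a_\lambda c])$ for all $c\in P$, which is precisely $\rho_P=\mathfrak{ad}_P^\ast$ in the notation of Proposition \ref{Poisson-dual-module}. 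The interchanged triple $u=f$, $v=g\in P^{\ast c}$, $w=b\in P$ yields $\rho_{P^{\ast c}}=\mathfrak{ad}_{P^{\ast c}}^\ast$. Feeding the analogous triples into the associative identity and using the conformal commutativity $a_\lambda b=b_{-\lambda-\partial}a$ reduces it to $(l_P(a)_\lambda g)_\nu c=g_{\nu-\lambda}(a_\lambda c)$, i.e.\ $l_P=-\mathfrak{L}_P^\ast$, and symmetrically $l_{P^{\ast c}}=-\mathfrak{L}_{P^{\ast c}}^\ast$; the remaining mixed triples reproduce these identities or follow from them. Reading this chain one way shows a Manin triple forces the dual representations, and the other way shows the dual-representation matched pair yields an invariant form, completing both implications.

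The main obstacle is the bookkeeping of the conformal variables rather than any conceptual difficulty: one must correctly interpret the substitution $\mu\mapsto\mu-\partial$ appearing in the invariance conditions and repeatedly use the sesquilinearity $f_\lambda(\partial b)=\lambda f_\lambda(b)$ to transport $\partial$ across the pairing, so that after simplification both sides of each identity carry the same dependence on $\lambda$ and $\mu$. The underlying principle — that invariance of the canonical pairing is equivalent to the two factors acting on one another by the coadjoint representations — is the familiar one from Lie and Poisson bialgebra theory, but the presence of $\partial$ inside the conformal weights makes matching the variables the delicate part of the verification.
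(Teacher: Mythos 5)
Your proposal is correct and takes essentially the same route as the paper, whose proof simply defers to the analogous argument for ASI conformal bialgebras in \cite[Theorem 3.6]{HB1}: there, too, symmetry, nondegeneracy and isotropy of the canonical pairing \eqref{poisson-inv} are automatic, the decomposition is handled by the matched-pair reconstruction of Proposition \ref{pro:match pair}, and the entire content is the verification that invariance on the mixed triples is equivalent to $\rho_P=\mathfrak{ad}_P^{\ast}$, $l_P=-\mathfrak{L}_P^{\ast}$, $\rho_{P^{\ast c}}=\mathfrak{ad}_{P^{\ast c}}^{\ast}$ and $l_{P^{\ast c}}=-\mathfrak{L}_{P^{\ast c}}^{\ast}$. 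Your bookkeeping of the conformal variables (converting $\mu-\partial$ into $\mu-\lambda$ via $f_\lambda(\partial b)=\lambda f_\lambda(b)$, then using conformal skew-symmetry and commutativity) is consistent with the dual actions as defined in Proposition \ref{Poisson-dual-module}, so the sketch matches the intended proof.
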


\begin{proof}
It follows from a similar proof as the one for \cite[Theorem
3.6]{HB1}.
\end{proof}

\section{Poisson conformal bialgebras and coboundary Poisson conformal bialgebras}
 We introduce the notion of Poisson conformal bialgebras and
 then give the equivalence between them and certain Manin triples of Poisson conformal algebras. Moreover, the theory of coboundary Poisson conformal bialgebras is studied in detail.

 \delete{First, we recall the definitions of cocommutative associative conformal coalgebras, commutative and cocommutative infinitesimal conformal bialgebras, Lie conformal coalgebras and Lie conformal bialgebras.}
\subsection{Poisson conformal bialgebras}\mbox{}

For an associative conformal algebra $(P, \cdot_\lambda\cdot)$, define a ${\bf k}[\partial]$-module homomorphism $\mathfrak{R}_P: P\rightarrow \text{gc}(P)$ as follows.
\begin{eqnarray*}
\mathfrak{R}_P(a)_\lambda(b):=b_{-\lambda-\partial} a,\;\;a, b\in P.\end{eqnarray*}
Set \begin{eqnarray*}
\partial^{\otimes^2}:=\partial\otimes I+I\otimes \partial,\;\;\partial^{\otimes^3}:=\partial\otimes I\otimes I+I\otimes \partial\otimes I+I\otimes I\otimes \partial.
\end{eqnarray*}

 \begin{defi}\cite{HB1} {\rm
 An {\bf associative conformal coalgebra} is a ${\bf k}[\partial]$-module $P$ endowed with a ${\bf k}[\partial]$-module homomorphism $\Delta: P\longrightarrow P\otimes P$
\delete{\cm{For algebra, it is a ${\bf k}$-bilinear map $P\otimes
P\rightarrow P[\lambda]$, whereas for a coproduct, it is  ${\bf
k}[\partial]$-module homomorphism $\Delta: P\longrightarrow
P\otimes P$. It is correct or a ``standard" expression? Whether we
can unify the notations?} \yy{It is a ``standard" expression.}}
 such
 that the following equation holds.
\begin{eqnarray}
&&\label{eq:coass}(I\otimes\Delta)\Delta(a)=(\Delta\otimes I)\Delta(a), \;\;a\in P,
\end{eqnarray}
where the module action of ${\bf k}[\partial]$ on $P\otimes P$ is defined as
$\partial (a\otimes b)=(\partial a)\otimes b+a\otimes (\partial b)$ for all $a$, $b\in P$. Denote it by $(P, \Delta)$. An associative conformal coalgebra $(P, \Delta)$ is called {\bf cocommutative} if $\Delta(a)=\tau\Delta(a)$ for all $a\in P$.

An {\bf antisymmetric infinitesimal conformal bialgebra (ASI
conformal bialgebra)} is a triple $(P, \cdot_\lambda\cdot,
\Delta)$ such that $(P,\cdot_\lambda \cdot)$ is an associative
conformal algebra, $(P,\Delta)$ is an associative conformal
coalgebra and they satisfy the following conditions
\begin{eqnarray}
\label{1-thq}&&\Delta(a_\lambda b)=(I\otimes {\mathfrak{L}_P(a)}_\lambda)\Delta(b)+({\mathfrak{R}_P(b)}_{-\lambda-\partial^{\otimes^2}}\otimes I)\Delta(a),\\
\label{2-thq}&&({\mathfrak{L}_P(b)}_{-\lambda-\partial^{\otimes^2}}\otimes I-I\otimes {\mathfrak{R}_P(b)}_{-\lambda-\partial^{\otimes^2}})\Delta(a)+\tau({\mathfrak{L}_P(a)}_\lambda\otimes I-I\otimes {\mathfrak{R}_P(a)}_\lambda)\Delta(b)=0,\;a, b\in P.
\end{eqnarray}

An ASI conformal bialgebra $(P, \cdot_\lambda\cdot, \Delta)$ is called {\bf commutative and cocommutative}, if $(P, \cdot_\lambda\cdot)$ is commutative and $(P, \Delta)$ is cocommutative.

 \delete{A commutative and cocommutative ASI conformal bialgebra $(P, \cdot_\lambda\cdot, \Delta)$ is called {\bf coboundary} if  there exists $r\in R\otimes R$ such that
\begin{eqnarray}\label{eQ1}
\Delta(a)=(I\otimes L_P(a)_\lambda-L_P(a)_{\lambda}\otimes
I)r|_{\lambda=-\partial^{\otimes^2}}, \;\;\;a\in P.
\end{eqnarray}}
}
 \end{defi}
\begin{rmk}
Note that for a commutative and cocommutative ASI conformal
bialgebra $(P$, $\cdot_\lambda\cdot$, $\Delta)$, Eq. (\ref{2-thq})
naturally holds and Eq. (\ref{1-thq}) becomes
 \begin{eqnarray}
\label{thq1}\Delta(a_\lambda b)=(I\otimes {\mathfrak{L}_P(a)}_\lambda)\Delta(b)+({\mathfrak{L}_P(b)}_{-\lambda-\partial^{\otimes^2}}\otimes I)\Delta(a),\;\;a, b\in P.
\end{eqnarray}
\end{rmk}

{\rm
\begin{defi}\cite{L}
A {\bf Lie conformal coalgebra} $(P, \delta)$ is a ${\bf
k}[\partial]$-module $P$ endowed with a ${\bf k}[\partial]$-module
homomorphism $\delta: P\rightarrow P\otimes P$ such that the
following conditions hold.
\begin{eqnarray*}
&&\delta(a)=-\tau \delta(a), \\
&&(I\otimes \delta)\delta(a)-(\tau\otimes I)(I\otimes \delta)\delta(a)=(\delta\otimes I)\delta(a),\;\;a\in P.
\end{eqnarray*}

A {\bf Lie conformal bialgebra} is a triple $(P, [\cdot_\lambda
\cdot],\delta)$ such that $(P, [\cdot_\lambda \cdot])$ is a Lie
conformal algebra, $(P, \delta)$ is a Lie conformal coalgebra and
they satisfy the following condition {\small \begin{eqnarray*}
\delta([a_\lambda b])=({\mathfrak{ad}_P(a)}_\lambda\otimes I+I\otimes
{\mathfrak{ad}_P(a)}_\lambda)\delta(b)-({\mathfrak{ad}_P(b)}_{-\lambda-\partial^{\otimes^2}}\otimes
I+I\otimes  {\mathfrak{ad}_P(b)}_{-\lambda-\partial^{\otimes^2}})\delta(a),
\;a, b\in P.
\end{eqnarray*}}
\end{defi}}

Next, we introduce the notions of Poisson conformal coalgebras and
Poisson conformal bialgebras.
\begin{defi}
{\rm A {\bf Poisson conformal coalgebra} $(P, \delta, \Delta)$ is
a ${\bf k}[\partial]$-module $P$ endowed with two ${\bf
k}[\partial]$-module homomorphisms $\delta$, $\Delta: P\rightarrow
P\otimes P$ such that $(P,\delta)$ is a Lie conformal coalgebra,
$(P, \Delta)$ is a cocommutative associative conformal coalgebra
and they satisfy the following compatibility condition
\begin{eqnarray}\label{coalgebra-1}
(I\otimes \Delta)\delta(a)=(\delta\otimes I)\Delta(a)+(\tau\otimes I)(I\otimes \delta)\Delta (a),\;\;\; a\in P.
\end{eqnarray}
}
\end{defi}

\begin{pro}
Let $(P, \delta, \Delta)$ be a Poisson conformal coalgebra. Then
$P^{\ast c}$ is a Poisson conformal algebra with the following
$\lambda$-bracket and $\lambda$-product
\begin{eqnarray*}
([f_\lambda g])_\mu (a)=\sum f_\lambda(a_{[1]})g_{\mu-\lambda}(a_{[2]})=(f\otimes g)_{\lambda,\mu-\lambda}(\delta (a)),\\
(f_\lambda g)_\mu (a)=\sum f_\lambda(a_{(1)})g_{\mu-\lambda}(a_{(2)})=(f\otimes g)_{\lambda,\mu-\lambda}(\Delta (a)),
\end{eqnarray*}
where $f, g\in P^{\ast c}$, $a\in P$, $\delta(a)=\sum a_{[1]}\otimes a_{[2]}$ and $\Delta(a)=\sum a_{(1)}\otimes a_{(2)}$.
\end{pro}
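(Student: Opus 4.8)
The plan is to verify the three defining properties of a Poisson conformal algebra for $P^{\ast c}$ separately, with the bulk of the work going into the Leibniz-type compatibility. Concretely, I must check that $(P^{\ast c},[\cdot_\lambda\cdot])$ is a Lie conformal algebra, that $(P^{\ast c},\cdot_\lambda\cdot)$ is a commutative associative conformal algebra, and that the two structures satisfy $[f_\lambda(g_\mu h)]=[f_\lambda g]_{\lambda+\mu}h+g_\mu[f_\lambda h]$ for all $f,g,h\in P^{\ast c}$. Throughout, the freeness/finiteness of $P$ guarantees that the pairing of $P^{\ast c}\otimes\cdots\otimes P^{\ast c}$ against $P\otimes\cdots\otimes P$ is nondegenerate, so each algebra identity on $P^{\ast c}$ is equivalent to the dual coalgebra identity on $P$.

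For the first two conditions I would invoke the known conformal coalgebra–algebra dualities rather than recompute them. Since $(P,\delta)$ is a Lie conformal coalgebra, its conformal dual $(P^{\ast c},[\cdot_\lambda\cdot])$ is a Lie conformal algebra: the anticommutativity and the conformal Jacobi identity are precisely the duals of the cocommutativity $\delta=-\tau\delta$ and of the co-Jacobi identity (see \cite{L}). Likewise, since $(P,\Delta)$ is a cocommutative associative conformal coalgebra, its conformal dual $(P^{\ast c},\cdot_\lambda\cdot)$ is a commutative associative conformal algebra, with commutativity coming from cocommutativity of $\Delta$ and associativity from coassociativity \eqref{eq:coass} (see \cite{HB1}).

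The main step is the Poisson compatibility, which I would obtain by dualizing \eqref{coalgebra-1}. Fix $a\in P$ and a generic evaluation parameter $\nu$, and write $\delta(a)=\sum a_{[1]}\otimes a_{[2]}$ and $\Delta(a)=\sum a_{(1)}\otimes a_{(2)}$. Using the defining formulas for $[\cdot_\lambda\cdot]$ and $\cdot_\lambda\cdot$ on $P^{\ast c}$ and treating $g_\mu h$ as carrying the spectral parameter $\mu$, the left-hand side unfolds to $[f_\lambda(g_\mu h)]_\nu(a)=\sum f_\lambda(a_{[1]})g_\mu(a_{[2](1)})h_{\nu-\lambda-\mu}(a_{[2](2)})$, which is exactly the pairing of $f\otimes g\otimes h$ (at parameters $\lambda,\mu,\nu-\lambda-\mu$) against $(I\otimes\Delta)\delta(a)$. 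In the same way, $[f_\lambda g]_{\lambda+\mu}h$ pairs against $(\delta\otimes I)\Delta(a)$, while $g_\mu[f_\lambda h]$ pairs against $(\tau\otimes I)(I\otimes\delta)\Delta(a)$, the flip $\tau$ reflecting that $g$ is evaluated on the slot $a_{(1)}$. Matching the three parameters term by term, the desired Leibniz identity is precisely the image of \eqref{coalgebra-1} under this nondegenerate pairing, and hence holds.

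The one place where genuine care is required — and the only real obstacle — is the bookkeeping of the spectral parameters $\lambda,\mu$ and the implicit $\partial$-action, together with the conformal-dual convention $f_\mu(\partial b)=\mu f_\mu(b)$. One must verify that the parameter carried by $g_\mu h$ propagates correctly through the outer bracket, that the substitution producing $[f_\lambda g]_{\lambda+\mu}h$ assigns $\lambda$ to $a_{(1)[1]}$ and $\mu$ to $a_{(1)[2]}$, and that the $\tau$ in the last term reproduces the correct evaluation slot; once these alignments are confirmed, the three terms correspond exactly to the three tensors in \eqref{coalgebra-1} and no further input is needed.
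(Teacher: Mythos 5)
Your proposal is correct and is essentially the paper's own argument: the authors likewise obtain the Lie conformal and commutative associative conformal algebra structures on $P^{\ast c}$ from \cite[Proposition 2.13]{L} and \cite[Proposition 4.2]{HB1}, and derive the Leibniz compatibility by dualizing Eq.~\eqref{coalgebra-1}, which is exactly your computation pairing $f\otimes g\otimes h$ at parameters $(\lambda,\mu,\nu-\lambda-\mu)$ against $(I\otimes\Delta)\delta(a)$, $(\delta\otimes I)\Delta(a)$ and $(\tau\otimes I)(I\otimes\delta)\Delta(a)$. One minor remark: the implication you actually need (coalgebra identity on $P$ implies algebra identity on $P^{\ast c}$) follows from direct evaluation on each $a\in P$, so the nondegeneracy of the pairing --- and hence any freeness assumption on $P$ --- is not required for this direction.
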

\begin{proof}
It follows from \cite[Proposition 2.13]{L} and \cite[Proposition
4.2]{HB1} with a similar proof as that in \cite[Proposition
2.13]{L}.
\end{proof}
\begin{pro}
Let $(P, [\cdot_\lambda \cdot], \cdot_\lambda \cdot)$ be a Poisson
conformal algebra in which $P$ is free as a ${\bf
k}[\partial]$-module, that is $P=\sum_{i=1}^n{\bf
k}[\partial]e_i$. Then $P^{\ast c}=\text{Chom}(A,{\bf
k})=\sum_{i=1}^n{\bf k}[\partial]e_i^\ast$ is a Poisson conformal
coalgebra with the following co-products
\begin{eqnarray}
&&\delta(f)=\sum_{i,j}f_\mu([{e_i}_\lambda e_j])(e_i^\ast\otimes e_j^\ast)|_{\lambda=\partial\otimes I,\mu=-\partial^{\otimes^2}},\;\;\;\Delta(f)=\sum_{i,j}f_\mu({e_i}_\lambda e_j)(e_i^\ast\otimes e_j^\ast)|_{\lambda=\partial\otimes I,\mu=-\partial^{\otimes^2}},
\end{eqnarray}  where $\{e_1^\ast, \cdots, e_n^\ast\}$ is the
 dual ${\bf k}[\partial]$-basis of $P^{\ast c}$.  More precisely, if $[{e_i}_\lambda e_j]=\sum_kR_k^{ij}(\lambda,\partial)e^k$ and ${e_i}_\lambda e_j=\sum_kT_k^{ij}(\lambda,\partial)e^k$, then
\begin{eqnarray*}
\delta(e_k^\ast)=\sum_{i,j}S_k^{ij}(\partial\otimes I, I\otimes\partial)(e_i^\ast\otimes e_j^\ast),\;\;\Delta(e_k^\ast)=\sum_{i,j}Q_k^{ij}(\partial\otimes I, I\otimes\partial)(e_i^\ast\otimes e_j^\ast),
\end{eqnarray*}
where $S_k^{ij}(x,y)=R_k^{ij}(x,-x-y)$ and $Q_k^{ij}(x,y)=T_k^{ij}(x,-x-y)$.
\end{pro}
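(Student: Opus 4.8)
The plan is to verify the three defining axioms of a Poisson conformal coalgebra for $(P^{\ast c},\delta,\Delta)$ separately, in each case dualizing the corresponding structure on $P$ through the evaluation pairing $P^{\ast c}\times P\to {\bf k}[\lambda]$, $(f,a)\mapsto f_\lambda(a)$. The defining formulas for $\delta$ and $\Delta$ say precisely that $\delta$ is the conformal transpose of the $\lambda$-bracket $[\cdot_\lambda\cdot]$ and that $\Delta$ is the conformal transpose of the $\lambda$-product $\cdot_\lambda\cdot$. Since $P$ is finite and free, conformal duality is an involution on such modules, so each structure map on $P$ corresponds bijectively to its transpose on $P^{\ast c}$, and verifying a coalgebra identity amounts to verifying the transposed algebra identity.

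First, because $(P,[\cdot_\lambda\cdot])$ is a Lie conformal algebra, the transpose $\delta$ makes $(P^{\ast c},\delta)$ a Lie conformal coalgebra; this is the conformal-duality correspondence between finite Lie conformal algebras and Lie conformal coalgebras of \cite{L}. Likewise, because $(P,\cdot_\lambda\cdot)$ is a commutative associative conformal algebra, the transpose $\Delta$ makes $(P^{\ast c},\Delta)$ a cocommutative associative conformal coalgebra by \cite{HB1}. Thus the Lie and associative conditions hold, and only the compatibility condition \eqref{coalgebra-1} remains.

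For \eqref{coalgebra-1}, I would pair both sides with an arbitrary $a\otimes b\otimes c\in P\otimes P\otimes P$ through the induced pairing on triple tensor products and then invoke nondegeneracy. Contracting $(I\otimes\Delta)\delta(f)$ against $a\otimes b\otimes c$ first merges the last two slots via $\Delta$ (the transpose of the product, combining $b$ and $c$) and then the remaining slots via $\delta$ (the transpose of the bracket), yielding $f$ evaluated on $[a_\lambda(b_\mu c)]$. On the other side, $(\delta\otimes I)\Delta(f)$ contracts to $f$ evaluated on $[a_\lambda b]_{\lambda+\mu}c$, while the flip $\tau\otimes I$ in $(\tau\otimes I)(I\otimes\delta)\Delta(f)$ is exactly what realigns the slots so that this term contracts to $f$ evaluated on $b_\mu[a_\lambda c]$. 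Hence \eqref{coalgebra-1}, paired with $a\otimes b\otimes c$, reduces to $f([a_\lambda(b_\mu c)])=f([a_\lambda b]_{\lambda+\mu}c)+f(b_\mu[a_\lambda c])$, which holds for every $f\in P^{\ast c}$ precisely because $P$ satisfies the Poisson conformal algebra compatibility condition. Nondegeneracy of the pairing, together with $a,b,c$ ranging over the basis $\{e_i\}$, then gives \eqref{coalgebra-1}.

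The explicit formulas for $\delta(e_k^\ast)$ and $\Delta(e_k^\ast)$ follow by direct substitution: applying ${e_k^\ast}_\mu$ to $[{e_i}_\lambda e_j]=\sum_l R_l^{ij}(\lambda,\partial)e_l$ and using the dual-basis relation ${e_k^\ast}_\mu(\partial^n e_l)=\mu^n\delta_{kl}$ picks out $R_k^{ij}(\lambda,\mu)$, and the substitution $\lambda=\partial\otimes I$, $\mu=-\partial^{\otimes^2}=-(\partial\otimes I+I\otimes\partial)$ turns this into $S_k^{ij}(\partial\otimes I,I\otimes\partial)$ with $S_k^{ij}(x,y)=R_k^{ij}(x,-x-y)$; the computation for $\Delta$ and $Q_k^{ij}$ is identical. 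The main obstacle I anticipate is the conformal-variable bookkeeping in the contractions of the compound maps: one must check that the substitution $\lambda=\partial\otimes I$, $\mu=-\partial^{\otimes^2}$ faithfully encodes the sesquilinearity of the evaluation pairing, so that each $\partial$ acting on a tensor factor translates into the correct shift of the spectral parameters $\lambda$, $\mu$, $\lambda+\mu$ when one expands $(I\otimes\Delta)\delta$, $(\delta\otimes I)\Delta$, and $(\tau\otimes I)(I\otimes\delta)\Delta$ against $a\otimes b\otimes c$.
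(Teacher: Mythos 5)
Your proposal is correct and takes essentially the same route as the paper, whose entire proof is to cite \cite[Proposition 2.13]{L} for the Lie conformal coalgebra structure and \cite[Proposition 4.3]{HB1} for the cocommutative associative conformal coalgebra structure, and to note that the compatibility condition follows ``with a similar proof as that in \cite[Proposition 2.13]{L}'' --- precisely the dualization argument you give, pairing $(I\otimes\Delta)\delta$, $(\delta\otimes I)\Delta$ and $(\tau\otimes I)(I\otimes\delta)\Delta$ against $a\otimes b\otimes c$ so that the co-Leibniz identity reduces to $[a_\lambda(b_\mu c)]=[a_\lambda b]_{\lambda+\mu}c+b_\mu[a_\lambda c]$. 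Your write-up merely makes explicit the term-matching and spectral-parameter bookkeeping that the paper leaves to the cited references.
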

\begin{proof}
It follows from \cite[Proposition 2.13]{L} and \cite[Proposition
4.3]{HB1} with a similar proof as that in \cite[Proposition
2.13]{L}.
\end{proof}


\begin{defi}
{\rm A {\bf Poisson conformal bialgebra} is  a 5-tuple $(P,
[\cdot_\lambda \cdot], \cdot_\lambda \cdot, \delta, \Delta)$ such
that $(P, [\cdot_\lambda \cdot]$, $\cdot_\lambda \cdot)$ is a
Poisson conformal algebra, $(P, \delta, \Delta)$ is a Poisson
conformal coalgebra, $(P, [\cdot_\lambda \cdot], \delta)$ is a Lie
conformal bialgebra, $(P,\cdot_\lambda \cdot, \Delta)$ is a
commutative and cocommutative ASI conformal bialgebra, and they
satisfy the following compatibility conditions
\begin{eqnarray}
&&\label{Poisson-bialgebra-1}\delta(a_\lambda b)=({\mathfrak{L}_P(a)}_\lambda \otimes I)\delta (b)+({\mathfrak{L}_P(b)}_{-\lambda-\partial^{\otimes^2}}\otimes I)\delta(a)
+(I\otimes {\mathfrak{ad}_P(a)}_\lambda)\Delta(b)\\
&&\qquad\qquad+(I\otimes {\mathfrak{ad}_P(b)}_{-\lambda-\partial^{\otimes^2}})\Delta(a),\nonumber\\
&&\label{Poisson-bialgebra-2}\Delta([a_\lambda b])=({\mathfrak{ad}_P(a)}_\lambda \otimes I+I\otimes {\mathfrak{ad}_P(a)}_\lambda)\Delta(b)\\
&&\qquad\qquad-(I\otimes {\mathfrak{L}_P(b)}_{-\lambda-\partial^{\otimes^2}})\delta(a)
+({\mathfrak{L}_P(b)}_{-\lambda-\partial^{\otimes^2}} \otimes I)\delta(a),\;\;a, b\in P.\nonumber
\end{eqnarray}
}
\end{defi}

\begin{thm}\label{Poisson-thm2}
Let $(P, [\cdot_\lambda \cdot], \cdot_\lambda \cdot)$ be a Poisson conformal algebra and $(P, \delta, \Delta)$ be a Poisson conformal coalgebra, where $P$ is free as a ${\bf k}[\partial]$-module. Denote the Poisson conformal algebra structure on $P^{\ast c}$ obtained from $\delta$ and $\Delta$ by $(P^{\ast c}, [\cdot_\lambda \cdot], \cdot_\lambda \cdot)$. Then the following conditions are equivalent.
\begin{enumerate}
\item \label{1}$(P, [\cdot_\lambda \cdot], \cdot_\lambda \cdot,
\delta, \Delta)$ is a Poisson conformal bialgebra. \item \label{2}
$(P, P^{\ast c}, \mathfrak{ad}_P^{\ast}, -\mathfrak{L}_P^\ast, \mathfrak{ad}_{P^{\ast c}}^\ast,
-\mathfrak{L}_{P^{\ast c}}^\ast)$ is a matched pair of Poisson conformal
algebras. \item \label{3} $(P\oplus P^{\ast c}, P, P^{\ast c})$ is
a Manin triple of Poisson conformal algebras with the invariant
symmetric conformal bilinear form $\langle \cdot, \cdot\rangle_\lambda$ on
$P\oplus P^{\ast c}$ defined by Eq.~{\rm (\ref{poisson-inv})}.
\end{enumerate}
\end{thm}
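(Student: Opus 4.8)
The plan is to establish the three-way equivalence by proving the cycle $(\ref{1})\Leftrightarrow(\ref{2})$ and then invoking Theorem~\ref{poisson-thm1} for $(\ref{2})\Leftrightarrow(\ref{3})$. The equivalence $(\ref{2})\Leftrightarrow(\ref{3})$ is immediate: Theorem~\ref{poisson-thm1} states precisely that $(P\oplus P^{\ast c}, P, P^{\ast c})$ is a Manin triple of Poisson conformal algebras with the bilinear form~(\ref{poisson-inv}) if and only if $(P, P^{\ast c}, \mathfrak{ad}_P^{\ast}, -\mathfrak{L}_P^\ast, \mathfrak{ad}_{P^{\ast c}}^\ast, -\mathfrak{L}_{P^{\ast c}}^\ast)$ is a matched pair of Poisson conformal algebras, so that step requires no new work. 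The entire content of the theorem therefore lies in $(\ref{1})\Leftrightarrow(\ref{2})$.

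To prove $(\ref{1})\Leftrightarrow(\ref{2})$, I would decompose the matched pair structure using Proposition~\ref{pro:match pair}. That proposition breaks a matched pair of Poisson conformal algebras into: (i) a matched pair of Lie conformal algebras $(P, P^{\ast c}, \mathfrak{ad}_P^\ast, \mathfrak{ad}_{P^{\ast c}}^\ast)$; (ii) a matched pair of commutative associative conformal algebras $(P, P^{\ast c}, -\mathfrak{L}_P^\ast, -\mathfrak{L}_{P^{\ast c}}^\ast)$; (iii) the representation conditions, which hold automatically here since $(P^{\ast c}, \mathfrak{ad}_P^\ast, -\mathfrak{L}_P^\ast)$ and $(P, \mathfrak{ad}_{P^{\ast c}}^\ast, -\mathfrak{L}_{P^{\ast c}}^\ast)$ are the dual representations guaranteed by Proposition~\ref{Poisson-dual-module}; and (iv) the four mixed compatibility conditions~(\ref{poisson-matched-1})--(\ref{poisson-matched-4}). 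The strategy is to match each of these pieces against the defining data of a Poisson conformal bialgebra. By the known Lie conformal bialgebra theory (\cite{L}), condition (i) is equivalent to $(P, [\cdot_\lambda\cdot], \delta)$ being a Lie conformal bialgebra; by the ASI conformal bialgebra theory (\cite{HB1}), condition (ii) is equivalent to $(P, \cdot_\lambda\cdot, \Delta)$ being a commutative and cocommutative ASI conformal bialgebra. What remains is to show that the four conditions~(\ref{poisson-matched-1})--(\ref{poisson-matched-4}) are together equivalent to the two mixed compatibility conditions~(\ref{Poisson-bialgebra-1}) and~(\ref{Poisson-bialgebra-2}) in the definition of a Poisson conformal bialgebra.

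The main obstacle is precisely this last dualization: translating the four operator-level identities~(\ref{poisson-matched-1})--(\ref{poisson-matched-4}), phrased in terms of $\mathfrak{ad}_P^\ast$, $\mathfrak{L}_P^\ast$ and their $P^{\ast c}$-counterparts, into the coproduct-level identities~(\ref{Poisson-bialgebra-1})--(\ref{Poisson-bialgebra-2}) phrased via $\delta$ and $\Delta$. The technique is to pair each conformal identity with arbitrary elements: I would evaluate both sides of~(\ref{poisson-matched-1})--(\ref{poisson-matched-4}) against test elements $f, g\in P^{\ast c}$ (or $a, b\in P$, as appropriate) using the defining relations of the dual operators, namely $({\rho_P^\ast(a)}_\lambda f)_\mu u=-f_{\mu-\lambda}({\rho_P(a)}_\lambda u)$ and its analog for $l_P^\ast$, together with the duality that recovers $\delta$ and $\Delta$ from the brackets on $P^{\ast c}$. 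Under this pairing, the actions of $\mathfrak{ad}_{P^{\ast c}}^\ast$ and $\mathfrak{L}_{P^{\ast c}}^\ast$ on $P$ become expressions involving $\delta$ and $\Delta$ applied to elements of $P$, and bookkeeping of the spectral parameters (carefully tracking the substitutions $\lambda, \mu, -\lambda-\partial^{\otimes^2}$, etc.) converts~(\ref{poisson-matched-1}) and~(\ref{poisson-matched-3}) into~(\ref{Poisson-bialgebra-2}), and~(\ref{poisson-matched-2}) and~(\ref{poisson-matched-4}) into~(\ref{Poisson-bialgebra-1}). The computation is entirely analogous in spirit to the proof of the corresponding statement for Lie conformal bialgebras (\cite[Theorem 2.15]{L}) and ASI conformal bialgebras (\cite[Theorem 3.6]{HB1}); the delicate point is managing the $\lambda$-parameter substitutions consistently across the mixed Lie/associative terms, since each mixed condition couples a $\lambda$-bracket action with a $\lambda$-product action. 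Once these correspondences are verified term by term, the equivalence $(\ref{1})\Leftrightarrow(\ref{2})$ follows, completing the proof.
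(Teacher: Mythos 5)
Your proposal follows essentially the same route as the paper: Theorem \ref{poisson-thm1} disposes of (\ref{2})$\Leftrightarrow$(\ref{3}), and (\ref{1})$\Leftrightarrow$(\ref{2}) is obtained by splitting the matched pair via Proposition \ref{pro:match pair} into the Lie part and the associative part (handled by the Lie conformal bialgebra theory of \cite{L} and the ASI conformal bialgebra theory of \cite{HB1}, with the representation conditions automatic from Proposition \ref{Poisson-dual-module}), and then dualizing the mixed conditions (\ref{poisson-matched-1})--(\ref{poisson-matched-4}) against (\ref{Poisson-bialgebra-1})--(\ref{Poisson-bialgebra-2}), which is exactly what the paper does ``with a similar proof as that in \cite[Theorem 3.7]{HB1}''. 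One bookkeeping slip: the correct correspondence under duality is (\ref{poisson-matched-1})$\Leftrightarrow$(\ref{Poisson-bialgebra-2})$\Leftrightarrow$(\ref{poisson-matched-4}) and (\ref{poisson-matched-2})$\Leftrightarrow$(\ref{Poisson-bialgebra-1})$\Leftrightarrow$(\ref{poisson-matched-3}), whereas you paired (\ref{poisson-matched-3}) with (\ref{Poisson-bialgebra-2}) and (\ref{poisson-matched-4}) with (\ref{Poisson-bialgebra-1}); for instance, in (\ref{poisson-matched-3}) the associative action $l_P=-\mathfrak{L}_P^\ast$ hits the Lie bracket of $P^{\ast c}$, so pairing with an element of $P$ produces $\delta$ evaluated against a $\lambda$-product, i.e.\ condition (\ref{Poisson-bialgebra-1}), while in (\ref{poisson-matched-4}) the Lie action $\rho_P=\mathfrak{ad}_P^\ast$ hits the product of $P^{\ast c}$, yielding $\Delta$ of a $\lambda$-bracket, i.e.\ (\ref{Poisson-bialgebra-2}). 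Since your argument only uses the conjunction of the four matched-pair conditions being equivalent to the conjunction of the two bialgebra conditions, this swap does not invalidate the plan, but the term-by-term verification you describe should target the corrected pairing.
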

\begin{proof}
By Theorem \ref{poisson-thm1}, Item (\ref{2}) holds if and only if
Item (\ref{3}) holds. Therefore we only need to prove that Item
(\ref{1}) holds if and only if  Item (\ref{2}) holds. By
\cite[Theorem 3.7]{HB1}, $(P,\cdot_\lambda \cdot,\Delta)$ is a
commutative and cocommutative ASI conformal bialgebra if and only
if $(P, P^{\ast c},-\mathfrak{L}_P^\ast, -\mathfrak{L}_{P^{\ast c}}^\ast)$ is a matched
pair of commutative associative conformal algebras. With a similar
proof as that in \cite[Theorem 3.7]{HB1}, we show that $(P,
[\cdot_\lambda \cdot], \delta)$ is a Lie conformal bialgebra if
and only if $(P, P^{\ast c},\mathfrak{ad}_P^\ast, \mathfrak{ad}_{P^{\ast
c}}^\ast)$ is a matched pair of Lie conformal algebras. Then with
a similar proof as that in \cite[Theorem 3.7]{HB1} again, we show
that when $\rho_P=\mathfrak{ad}_P^\ast$, $l_P=-\mathfrak{L}_P^\ast$,
$\rho_Q=\mathfrak{ad}_{P^{\ast c}}^\ast$ and $l_Q=-\mathfrak{L}_{P^{\ast
c}}^\ast$,
\begin{eqnarray*}
\text{Eq.}~(\ref{poisson-matched-1})\Longleftrightarrow \text{Eq.}
~(\ref{Poisson-bialgebra-2})\Longleftrightarrow \text{Eq.}
~(\ref{poisson-matched-4}), \;\;\text{Eq.}~
(\ref{poisson-matched-2})\Longleftrightarrow
\text{Eq.}~(\ref{Poisson-bialgebra-1})\Longleftrightarrow
\text{Eq.}~(\ref{poisson-matched-3}).
\end{eqnarray*}
Therefore the conclusion holds.
\end{proof}


\subsection{Coboundary Poisson conformal bialgebras}
\begin{defi}
A Poisson conformal bialgebra $(P, [\cdot_\lambda \cdot],
\cdot_\lambda \cdot, \delta, \Delta)$ is called {\bf coboundary},
if both the Lie conformal bialgebra $(P, [\cdot_\lambda \cdot],
\delta)$ and the commutative and cocommutative ASI conformal
bialgebra $(P, \cdot_\lambda \cdot, \Delta)$ are coboundary, that
is, there exists an $r\in P\otimes P$ such that
\begin{eqnarray}
\label{poisson-coboundary-11}\delta(a)=({\mathfrak{ad}_P(a)}_\lambda \otimes I+I\otimes {\mathfrak{ad}_P(a)}_\lambda) r|_{\lambda=-\partial^{\otimes^2}},\\
\label{poisson-coboundary-2}\Delta(a)=(I\otimes
{\mathfrak{L}_P(a)}_\lambda-{\mathfrak{L}_P(a)}_\lambda\otimes
I)r|_{\lambda=-\partial^{\otimes^2}},
\end{eqnarray}
 for all $a\in P$.
\end{defi}

Set $r=\sum_i r_i\otimes l_i\in P\otimes P$. Set
\begin{eqnarray*}
&&[[r,r]]:=\sum_{i,j}([{r_i}_\mu r_j]\otimes l_i \otimes l_j|_{\mu=I\otimes \partial \otimes I}-r_i\otimes [{r_j}_\mu l_i]\otimes l_j|_{\mu=I\otimes I\otimes \partial}-r_i\otimes r_j\otimes [{l_j}_\mu l_i]|_{\mu=I\otimes \partial\otimes I}),\\
&&r\bullet r:=\sum_{i,j} (r_i\otimes r_j\otimes {l_i}_\mu l_j|_{\mu=\partial\otimes I\otimes I}-r_i\otimes {r_j}_\mu l_i\otimes l_j|_{\mu=-\partial^{\otimes^2}\otimes I}+{r_i}_\mu r_j\otimes l_i\otimes l_j|_{\mu=I\otimes \partial\otimes I}).
\end{eqnarray*}

\begin{thm}\label{Poisson-thm3}
Let $(P, [\cdot_\lambda \cdot], \cdot_\lambda \cdot)$ be a Poisson
conformal algebra and $r=\sum_i r_i\otimes l_i\in P\otimes P$.
Define $\delta$ and $\Delta$ by Eqs. {\rm
(\ref{poisson-coboundary-11})} and {\rm
(\ref{poisson-coboundary-2})} respectively. Then $(P,
[\cdot_\lambda \cdot], \cdot_\lambda \cdot, \delta, \Delta)$ is a
Poisson conformal bialgebra if and only if the following
conditions are satisfied
\begin{enumerate}
        \item \label{cond1}$({\mathfrak{ad}_P(a)}_\lambda \otimes I+I\otimes {\mathfrak{ad}_P(a)}_\lambda )(r+\tau r)|_{\lambda=-\partial^{\otimes^2}}=0$;
        \item \label{cond2}$(I\otimes \mathfrak{L}_P(a)_{-\partial^{\otimes^2}}-\mathfrak{L}_P(a)_{-\partial^{\otimes^2}}\otimes I)(r+\tau r)=0$;
        \item \label{cond3}$({\mathfrak{ad}_P(a)}_\lambda\otimes I\otimes I+I\otimes {\mathfrak{ad}_P(a)}_\lambda\otimes I+I\otimes I\otimes {\mathfrak{ad}_P(a)}_\lambda)[[r,r]]|_{\lambda=-\partial^{\otimes^3}}=0$;
        \item \label{cond4}$(I\otimes I\otimes \mathfrak{L}_P(a)_{-\partial^{\otimes^3}}-\mathfrak{L}_P(a)_{-\partial^{\otimes^3}}\otimes I\otimes I) (r\bullet r)=0$;
        \item \label{cond5}$({\mathfrak{ad}_P(a)}_{-\partial^{\otimes^3}} \otimes I\otimes I)(r\bullet r)-(I\otimes {\mathfrak{L}_P(a)}_{-\partial^{\otimes^3}} \otimes I-I\otimes I\otimes {\mathfrak{L}_P(a)}_{-\partial^{\otimes^3}} )[[r,r]]
+\sum_i(({\mathfrak{ad}_P(r_i)}_{I\otimes I\otimes \partial} \otimes
I)({\mathfrak{L}_P(a)}_{-\partial^{\otimes^2}\otimes I} \otimes I-I\otimes
{\mathfrak{L}_P(a)}_{-\partial^{\otimes^2}\otimes I})(r+\tau r)\otimes
l_i)=0$,
      \end{enumerate}
for all $a\in P$.
\end{thm}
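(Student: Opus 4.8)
The plan is to verify the defining axioms of a Poisson conformal bialgebra one family at a time --- the Lie-conformal part, the commutative and cocommutative associative part, and the genuinely mixed Poisson compatibilities --- after substituting the coboundary expressions \eqref{poisson-coboundary-11} and \eqref{poisson-coboundary-2} for $\delta$ and $\Delta$. The organizing principle, familiar from the classical coboundary bialgebra theory, is that a coboundary coproduct automatically satisfies the relevant cocycle condition; hence from each family of axioms only the ``(co)skew-symmetry/(co)commutativity'' and the ``co-Jacobi/coassociativity'' obstructions survive, and these are exactly what conditions (\ref{cond1})--(\ref{cond5}) encode.

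First, since a Poisson conformal bialgebra requires $(P,[\cdot_\lambda\cdot],\delta)$ to be a Lie conformal bialgebra with $\delta$ given by \eqref{poisson-coboundary-11}, I would invoke the coboundary theory of Lie conformal bialgebras \cite{L, HB}: the defining $1$-cocycle condition holds automatically for a coboundary, so it only remains that $\delta$ be co-skew-symmetric and satisfy the co-Jacobi identity, which are precisely conditions (\ref{cond1}) and (\ref{cond3}) (the latter packaged through $[[r,r]]$). Symmetrically, since $(P,\cdot_\lambda\cdot,\Delta)$ must be a commutative and cocommutative ASI conformal bialgebra with $\Delta$ given by \eqref{poisson-coboundary-2}, the coboundary theory of ASI conformal bialgebras \cite{HB1} reduces the problem to cocommutativity of $\Delta$ and coassociativity, yielding conditions (\ref{cond2}) and (\ref{cond4}) (the latter packaged through $r\bullet r$), while the compatibility \eqref{thq1} holds automatically.

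It then remains to treat the Poisson-specific requirements: that $(P,\delta,\Delta)$ be a Poisson conformal coalgebra, i.e. Eq.~\eqref{coalgebra-1}, together with the bialgebra compatibilities \eqref{Poisson-bialgebra-1} and \eqref{Poisson-bialgebra-2}. Here I would substitute both coboundary formulas and expand, systematically rewriting the mixed terms $[{r_i}_\mu l_j]$ and ${r_i}_\mu l_j$ by means of the Poisson conformal algebra axioms, in particular the conformal Leibniz-type rule relating the $\lambda$-bracket and the $\lambda$-product. Using the relations (\ref{cond1}) and (\ref{cond2}) already established on $r+\tau r$, I expect Eq.~\eqref{coalgebra-1} to be satisfied automatically (being the cocycle-type compatibility of the two coboundaries), and Eqs.~\eqref{Poisson-bialgebra-1} and \eqref{Poisson-bialgebra-2} to become equivalent and to collapse, once the automatically vanishing cocycle pieces cancel, into the single equation (\ref{cond5}); the cross term $\sum_i(({\mathfrak{ad}_P(r_i)}\cdots)(\cdots)(r+\tau r)\otimes l_i)$ appearing in (\ref{cond5}) is exactly the residue that is not controlled by (\ref{cond1})--(\ref{cond2}) alone.

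The main obstacle is this last step: carrying out the long substitution into \eqref{coalgebra-1}--\eqref{Poisson-bialgebra-2} while keeping track of the numerous $\lambda$-variables and of the $\partial$-shifts induced by $-\lambda-\partial^{\otimes^2}$ and $-\partial^{\otimes^3}$, and verifying that every term not already killed by conditions (\ref{cond1})--(\ref{cond4}) reassembles precisely into (\ref{cond5}) rather than producing an independent constraint. Recognizing at the outset that \eqref{Poisson-bialgebra-1} and \eqref{Poisson-bialgebra-2} are equivalent --- mirroring the way the matched-pair conditions pair up in Theorem~\ref{Poisson-thm2} --- should roughly halve the computation and make transparent that the single condition (\ref{cond5}) suffices.
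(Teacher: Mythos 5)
Your reduction of the single-structure axioms is exactly the paper's: the coboundary theory of Lie conformal bialgebras (\cite{L}) makes the Lie $1$-cocycle condition automatic and leaves co-skew-symmetry and co-Jacobi, i.e.\ conditions (\ref{cond1}) and (\ref{cond3}), and the coboundary theory of ASI conformal bialgebras (\cite{HB1}) likewise yields (\ref{cond2}) and (\ref{cond4}). The genuine gap is in your treatment of the remaining mixed axioms: you have inverted which of them is automatic and which produces (\ref{cond5}). You predict that the Poisson conformal coalgebra condition \eqref{coalgebra-1} holds automatically (calling it ``the cocycle-type compatibility of the two coboundaries'') and that the bialgebra compatibilities \eqref{Poisson-bialgebra-1}--\eqref{Poisson-bialgebra-2} collapse into (\ref{cond5}). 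The truth is the reverse. Equations \eqref{Poisson-bialgebra-1} and \eqref{Poisson-bialgebra-2} are the mixed $1$-cocycle conditions: they are linear in $r$, and substituting the coboundary formulas \eqref{poisson-coboundary-11} and \eqref{poisson-coboundary-2} makes them hold identically for \emph{every} $r$, as a direct consequence of the Leibniz-type axiom of the Poisson conformal algebra --- the paper dispatches this in one sentence. By contrast, \eqref{coalgebra-1} is the mixed analogue of co-Jacobi/coassociativity, hence quadratic in $r$, and it is precisely this equation whose expansion the paper computes at length, showing it is equivalent to (\ref{cond5}); the cross term $\sum_i(({\mathfrak{ad}_P(r_i)}_{I\otimes I\otimes \partial}\otimes I)(\cdots)(r+\tau r)\otimes l_i)$ arises there directly, not as a residue of (\ref{cond1})--(\ref{cond2}).

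This is not a cosmetic relabeling: carried out as written, your plan fails. Expanding \eqref{Poisson-bialgebra-1}--\eqref{Poisson-bialgebra-2} would cancel to $0=0$ and yield no constraint at all, and since you have assumed \eqref{coalgebra-1} away, you would conclude that $(P,[\cdot_\lambda\cdot],\cdot_\lambda\cdot,\delta,\Delta)$ is a Poisson conformal bialgebra under (\ref{cond1})--(\ref{cond4}) alone, i.e.\ you would ``prove'' a false characterization missing (\ref{cond5}). Note also that \eqref{coalgebra-1} cannot be automatic even granting (\ref{cond1})--(\ref{cond2}): it is exactly as non-trivial as co-Jacobi and coassociativity, which you yourself (correctly) did not expect to be free. (A further small point: within the coboundary setting the claimed ``equivalence'' of \eqref{Poisson-bialgebra-1} and \eqref{Poisson-bialgebra-2} is vacuous, since both hold identically; the pairing you recall from Theorem \ref{Poisson-thm2} concerns matched-pair conditions and plays no role here.) The fix is to swap the two roles: verify \eqref{Poisson-bialgebra-1}--\eqref{Poisson-bialgebra-2} directly from the coboundary formulas, and devote the long $\lambda$/$\partial$-bookkeeping computation to showing \eqref{coalgebra-1} is equivalent to (\ref{cond5}), which is the paper's route.
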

\begin{proof}
By \cite[Theorem 3.4]{L}, $\delta$ defined by
Eq.~(\ref{poisson-coboundary-11}) makes $(P, [\cdot_\lambda
\cdot], \delta)$ into a Lie conformal bialgebra if and only if
Items (\ref{cond1}) and (\ref{cond3}) hold. Moreover, by
\cite[Theorem 5.3]{HB1}, $(P, \cdot_\lambda \cdot, \Delta)$ is a
commutative and cocommutative ASI conformal bialgebra if and only
if Items (\ref{cond2}) and (\ref{cond4}) hold. By Eqs.
(\ref{poisson-coboundary-11}) and (\ref{poisson-coboundary-2}), it
is straightforward to show that Eqs. (\ref{Poisson-bialgebra-1})
and (\ref{Poisson-bialgebra-2}) naturally hold. Then we only need
to prove that Eq. (\ref{coalgebra-1}) holds if and only if Item
(\ref{cond5}) holds. Let $a\in P$. By Eqs.
(\ref{poisson-coboundary-11}) and (\ref{poisson-coboundary-2}), we
obtain
\begin{eqnarray*}
0&=&(I\otimes \Delta)\delta(a)-(\delta \otimes I)\Delta (a)-(\tau\otimes I)(I\otimes \delta)\Delta(a)\\
&=&\sum_{i,j}([a_{-\partial^{\otimes^3}}r_i]\otimes r_j\otimes {l_i}_{-I\otimes \partial^{\otimes^2}} l_j
-[a_{-\partial^{\otimes^3}}r_i]\otimes {l_i}_{-I\otimes \partial^{\otimes^2}}r_j\otimes l_j\\
&&+[(a_\lambda r_i)_{-\partial^{\otimes^2}\otimes I}r_j]\otimes l_j\otimes l_i|_{\lambda=-\partial^{\otimes^3}}
-[(a_\lambda l_i)_{-\partial\otimes I\otimes I-I\otimes I\otimes \partial} r_j]\otimes r_i\otimes l_j|_{\lambda=-\partial^{\otimes^3}}\\
&&-r_i\otimes [a_\lambda l_i]_{-\partial^{\otimes^2}\otimes I}r_j\otimes l_j|_{\lambda=-\partial^{\otimes^3}}+r_j\otimes
[(a_\lambda r_i)_{-\partial^{\otimes^2}\otimes I}l_j]\otimes l_j\\
&&+[{l_j}_{-\partial\otimes I\otimes I-I\otimes I\otimes \partial}r_j]\otimes a_{-\partial^{\otimes^3}}r_i\otimes l_j+r_j\otimes a_{-\partial^{\otimes^3}}r_i\otimes [{l_i}_{-\partial\otimes I\otimes I-I\otimes I\otimes \partial} l_j]\\
&&+r_i\otimes r_j\otimes [a_\lambda l_i]_{-I\otimes \partial^{\otimes^2}}l_j|_{\lambda=-\partial^{\otimes^3}}-[{r_i}_{-\partial^{\otimes^2}\otimes I} r_j]\otimes l_j\otimes a_{-\partial^{\otimes^3}} l_i\\
&&-r_j\otimes [{r_i}_{-\partial^{\otimes^2}\otimes I} l_j]\otimes a_{-\partial^{\otimes^3}}l_i
-r_j\otimes r_i\otimes [(a_\lambda l_i)_{-\partial\otimes I\otimes I-I\otimes I\otimes \partial} l_j]|_{\lambda=-\partial^{\otimes^3}}).
\end{eqnarray*}
It is straightforward to show that the sum of the first four terms
is $$({\mathfrak{ad}_P(a)}_{-\partial^{\otimes^3}} \otimes I\otimes
I)(r\bullet r)+\sum_i((({\mathfrak{ad}_P(r_i)}_{I\otimes I\otimes
\partial}\otimes I)(\mathfrak{L}_P(a)_{-\partial^{\otimes^2}\otimes I}\otimes
I)(r+\tau r))\otimes l_i),$$ the sum of next four terms is
$$-(I\otimes {\mathfrak{L}_P(a)}_{-\partial^{\otimes^3}}\otimes
I)[[r,r]]-\sum_i((({\mathfrak{ad}_P(r_i)}_{I\otimes I\otimes \partial}
\otimes I)(I\otimes \mathfrak{L}_P(a)_{-\partial^{\otimes^2}\otimes I} )(r+\tau
r))\otimes l_i),$$ and the sum of the last four terms is
$(I\otimes I\otimes {\mathfrak{L}_P(a)}_{-\partial^{\otimes^3}})[[r,r]]$.
Then the proof is completed.
\end{proof}

\begin{cor}\label{cob-Poisson}
Let $(P, [\cdot_\lambda \cdot], \cdot_\lambda \cdot)$ be a Poisson
conformal algebra and $r\in P\otimes P$. Suppose that $r$ is
skew-symmetric. Then the maps given by Eqs. {\rm
(\ref{poisson-coboundary-11})} and {\rm
(\ref{poisson-coboundary-2})} define a Poisson conformal coalgebra
$(P, \delta, \Delta)$ such that $(P, [\cdot_\lambda \cdot],
\cdot_\lambda \cdot, \delta, \Delta)$ is a Poisson conformal
bialgebra if
\begin{eqnarray}
\label{Yang-Baxter-1}[[r,r]]\equiv 0~~~~\text{mod}~~(\partial^{\otimes^3}),\\
\label{Yang-Baxter-2}r\bullet r \equiv 0~~~~\text{mod}~~(\partial^{\otimes^3}).
\end{eqnarray}
\end{cor}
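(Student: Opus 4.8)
The plan is to derive this directly from Theorem~\ref{Poisson-thm3}: I would show that the two hypotheses on $r$ (skew-symmetry together with \eqref{Yang-Baxter-1} and \eqref{Yang-Baxter-2}) force each of the five conditions (\ref{cond1})--(\ref{cond5}) to hold, whence the conclusion is immediate. The whole argument rests on exactly two observations, one from skew-symmetry and one from the defining property of conformal linear maps, so there is essentially no computation to grind through once these are isolated.

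First I would record the consequences of skew-symmetry. Since $r$ is skew-symmetric we have $r+\tau r=0$, and both (\ref{cond1}) and (\ref{cond2}) are obtained by applying a $\bfk$-linear operator to $r+\tau r$; hence they vanish outright. For the same reason the third summand of (\ref{cond5}), namely $\sum_i(({\mathfrak{ad}_P(r_i)}_{I\otimes I\otimes \partial} \otimes I)({\mathfrak{L}_P(a)}_{-\partial^{\otimes^2}\otimes I} \otimes I-I\otimes {\mathfrak{L}_P(a)}_{-\partial^{\otimes^2}\otimes I})(r+\tau r)\otimes l_i)$, has $r+\tau r$ as its innermost argument and so vanishes by linearity. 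Thus only (\ref{cond3}), (\ref{cond4}), and the first two summands of (\ref{cond5}) remain.

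The key technical point, which I expect to be the only substantive step, is an annihilation lemma. Recall that every conformal linear map $f_\lambda$ satisfies $f_\lambda(\partial u)=(\partial+\lambda)f_\lambda u$; in particular this holds for $\mathfrak{ad}_P(a)_\lambda$ and $\mathfrak{L}_P(a)_\lambda$. Consequently, for any operator $O_\lambda$ on $P\otimes P\otimes P$ that is a $\bfk$-linear combination of operators, each placing a single factor $\mathfrak{ad}_P(a)_\lambda$ or $\mathfrak{L}_P(a)_\lambda$ on exactly one tensor slot and the identity on the others, one checks slot by slot that $O_\lambda(\partial^{\otimes^3}w)=(\partial^{\otimes^3}+\lambda)O_\lambda(w)$ for all $w$. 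Writing $O_\lambda(w)=\sum_n\lambda^n c_n$ and then substituting $\lambda=-\partial^{\otimes^3}$, the two contributions of $(\partial^{\otimes^3}+\lambda)O_\lambda(w)$ cancel term by term, so $O_\lambda|_{\lambda=-\partial^{\otimes^3}}$ annihilates the whole submodule $\im(\partial^{\otimes^3})$.

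Finally I would apply the lemma. The operators appearing in (\ref{cond3}), in (\ref{cond4}), in the first summand $({\mathfrak{ad}_P(a)}_{-\partial^{\otimes^3}} \otimes I\otimes I)$ of (\ref{cond5}), and in its second summand $(I\otimes {\mathfrak{L}_P(a)}_{-\partial^{\otimes^3}} \otimes I-I\otimes I\otimes {\mathfrak{L}_P(a)}_{-\partial^{\otimes^3}})$ are each of the form covered by the lemma and are all evaluated at $\lambda=-\partial^{\otimes^3}$. Since \eqref{Yang-Baxter-1} and \eqref{Yang-Baxter-2} say precisely that $[[r,r]]$ and $r\bullet r$ lie in $\im(\partial^{\otimes^3})$, the lemma forces all four terms to vanish, establishing (\ref{cond3}), (\ref{cond4}), and the remaining part of (\ref{cond5}). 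By Theorem~\ref{Poisson-thm3} it follows that $(P, [\cdot_\lambda \cdot], \cdot_\lambda \cdot, \delta, \Delta)$ is a Poisson conformal bialgebra. This is the same mechanism that underlies the coboundary results \cite[Theorem 3.4]{L} and \cite[Theorem 5.3]{HB1} for the Lie conformal and ASI conformal pieces separately; the only new ingredient here is that the mixed compatibility term (\ref{cond5}) is split into one piece governed by $[[r,r]]$, one governed by $r\bullet r$, and one killed by skew-symmetry.
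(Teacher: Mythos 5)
Your proposal is correct and follows the same route as the paper, which proves Corollary~\ref{cob-Poisson} simply by citing Theorem~\ref{Poisson-thm3}: you have merely made explicit the routine verification the paper leaves implicit, namely that skew-symmetry ($r+\tau r=0$) kills conditions (\ref{cond1}), (\ref{cond2}) and the third summand of (\ref{cond5}), while the standard annihilation observation --- that any single-slot combination of $\mathfrak{ad}_P(a)_\lambda$ and $\mathfrak{L}_P(a)_\lambda$ satisfies $O_\lambda(\partial^{\otimes^3}w)=(\partial^{\otimes^3}+\lambda)O_\lambda(w)$ and hence vanishes on $\mathrm{im}(\partial^{\otimes^3})$ at $\lambda=-\partial^{\otimes^3}$ --- disposes of (\ref{cond3}), (\ref{cond4}) and the remaining summands of (\ref{cond5}) via Eqs.~(\ref{Yang-Baxter-1}) and (\ref{Yang-Baxter-2}). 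Both observations are sound and this is exactly the mechanism underlying the cited coboundary results in \cite{L} and \cite{HB1}, so no gap remains.
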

\begin{proof}
It follows from Theorem \ref{Poisson-thm3} immediately.
\end{proof}
\begin{defi}
\delete{(\ref{Yang-Baxter-1}) is called {\bf classical conformal
Yang-Baxter equation} (see \cite{L}).} Let $(P, [\cdot_\lambda
\cdot], \cdot_\lambda \cdot)$ be a Poisson conformal algebra and
$r\in P\otimes P$. If $r$ satisfies both Eqs.
(\ref{Yang-Baxter-1}) and (\ref{Yang-Baxter-2}), then $r$ is
called {\bf a solution of the Poisson conformal Yang-Baxter
equation (PCYBE) in $P$}.
\end{defi}

\begin{thm}
Let $(P,[\cdot_\lambda \cdot], \cdot_\lambda \cdot, \delta,
\Delta)$ be a Poisson conformal bialgebra in which $P$ is free as
a ${\bf k}[\partial]$-module. Then there is a canonical Poisson
conformal bialgebra structure on the ${\bf k}[\partial]$-module
$P\oplus P^{\ast c}$ as the direct sum of ${\bf
k}[\partial]$-modules.
\end{thm}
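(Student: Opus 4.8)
The plan is to exhibit $P\oplus P^{\ast c}$ as a coboundary Poisson conformal bialgebra associated to the canonical element of the Manin triple. By Theorem~\ref{Poisson-thm2}, the hypothesis that $(P,[\cdot_\lambda\cdot],\cdot_\lambda\cdot,\delta,\Delta)$ is a Poisson conformal bialgebra is equivalent to $(P\oplus P^{\ast c},P,P^{\ast c})$ being a Manin triple of Poisson conformal algebras; write $D:=P\oplus P^{\ast c}$ for the resulting Poisson conformal algebra, equipped with the nondegenerate invariant symmetric conformal bilinear form $\langle\cdot,\cdot\rangle_\lambda$ of Eq.~\eqref{poisson-inv}. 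Choosing a ${\bf k}[\partial]$-basis $\{e_i\}$ of $P$ with dual basis $\{e_i^{\ast}\}$ of $P^{\ast c}$, I would set
\[
r=\sum_i e_i\otimes e_i^{\ast}\in D\otimes D
\]
and define $\delta_D,\Delta_D: D\to D\otimes D$ by the coboundary formulas~\eqref{poisson-coboundary-11} and~\eqref{poisson-coboundary-2} (now with $\mathfrak{ad}_D$ and $\mathfrak{L}_D$). The claim is that $(D,[\cdot_\lambda\cdot],\cdot_\lambda\cdot,\delta_D,\Delta_D)$ is a Poisson conformal bialgebra; a direct check that $\delta_D$ and $\Delta_D$ restrict on $P$ to $\delta,\Delta$ (the mixed $P\otimes P^{\ast c}$ contributions cancelling by duality of the bases) justifies calling this structure canonical.

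By Theorem~\ref{Poisson-thm3} it suffices to verify conditions~(1)--(5) for this $r$ in $D$. Conditions~(1) and~(2) involve only the symmetric combination $r+\tau r=\sum_i(e_i\otimes e_i^{\ast}+e_i^{\ast}\otimes e_i)$, which is precisely the Casimir element dual to the form $\langle\cdot,\cdot\rangle_\lambda$. The invariance of that form under both the $\lambda$-bracket and the $\lambda$-product of $D$---part of the Manin triple data---translates directly into the vanishing required by~(1) and~(2); concretely one pairs each side against an arbitrary $f\in D^{\ast c}$ and invokes invariance. Granting~(1) and~(2), condition~(5) then splits into three groups of terms that vanish separately: the group carrying $r+\tau r$ vanishes because condition~(2) forces its inner factor $(\mathfrak{L}_P(a)\otimes I-I\otimes \mathfrak{L}_P(a))(r+\tau r)$ to be zero in the relevant tensor slots, while the two remaining groups are governed by the Yang-Baxter relations treated next.

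The substantive step is thus to show that $r$ solves the PCYBE in $D$, namely $[[r,r]]\equiv 0$ and $r\bullet r\equiv 0$ modulo $(\partial^{\otimes^3})$ (Eqs.~\eqref{Yang-Baxter-1} and~\eqref{Yang-Baxter-2}), which simultaneously yields conditions~(3),~(4) and the remaining part of~(5). This is the conformal analogue of the classical fact that the canonical $r$-matrix of a double satisfies the classical Yang-Baxter equation. I would compute $[[r,r]]$ and $r\bullet r$ by expanding all $\lambda$-brackets and $\lambda$-products of the pairs drawn from $\{e_i\}$ and $\{e_i^{\ast}\}$ in $D$ via the matched-pair formulas~\eqref{ass-matched pair} and~\eqref{107} for the representations $\mathfrak{ad}_P^{\ast},-\mathfrak{L}_P^{\ast}$ and $\mathfrak{ad}_{P^{\ast c}}^{\ast},-\mathfrak{L}_{P^{\ast c}}^{\ast}$, and then showing that every resulting term cancels against its partner by virtue of the coassociativity, co-Jacobi and compatibility~\eqref{coalgebra-1} axioms of the Poisson conformal coalgebra $(P,\delta,\Delta)$.

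The main obstacle is exactly this last cancellation. Unfolded over the dual bases, $[[r,r]]$ and $r\bullet r$ each produce a large collection of terms spread across three tensor factors and decorated with spectral parameters and $-\lambda-\partial$ substitutions, and collapsing them requires matching each contribution precisely against a defining identity of $\delta$ or $\Delta$. Conceptually this is the ``tautological'' content of the double construction, but the conformal bookkeeping of the $\lambda$-variables and the $\partial$-actions on the several tensor factors makes it genuinely delicate; the care lies in organizing the terms so that the coalgebra axioms can be applied slot-by-slot.
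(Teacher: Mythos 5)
Your proposal follows essentially the same route as the paper's proof: obtain the Poisson conformal algebra structure on $P\oplus P^{\ast c}$ from the matched pair $(P, P^{\ast c}, \mathfrak{ad}_P^{\ast}, -\mathfrak{L}_P^\ast, \mathfrak{ad}_{P^{\ast c}}^\ast, -\mathfrak{L}_{P^{\ast c}}^\ast)$, take the canonical element $r=\sum_i e_i\otimes e_i^{\ast}$, check conditions (1)--(4) of Theorem \ref{Poisson-thm3} together with the PCYBE for $r$, and reduce condition (5) to the $(r+\tau r)$-term, which vanishes by condition (2) --- exactly the paper's argument. The only difference is one of packaging: where you sketch a direct expansion establishing $[[r,r]]\equiv 0$, $r\bullet r\equiv 0$ and conditions (1)--(2) (the latter via invariance of the form, correctly), the paper simply cites \cite[Theorem 5.8]{HB1} and \cite[Theorem 3.10]{L}, which already furnish these facts for the ASI conformal and Lie conformal components of the double.
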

\begin{proof}
Let $\{e_1,\cdots, e_n\}$ be a ${\bf k}[\partial]$-basis of $P$
and $\{ e_1^\ast, \cdots, e_n^\ast\}$ be the dual ${\bf
k}[\partial]$-basis in $P^{\ast c}$. By Theorem
\ref{poisson-thm1}, $(P, P^{\ast c}, \mathfrak{ad}_P^{\ast},
-\mathfrak{L}_P^\ast, \mathfrak{ad}_{P^{\ast c}}^\ast, -\mathfrak{L}_{P^{\ast c}}^\ast)$ is
a matched pair of Poisson conformal algebras where the Lie
conformal algebra structure and associative conformal algebra
structure on $P^{\ast c}$ are obtained from $\delta$ and $\Delta$
respectively.  Then by Proposition \ref{pro:match pair}, there is
a Poisson conformal algebra structure on $P\oplus P^{\ast c}$
associated with this matched pair. Set $r=\sum_{i=1}^n e_i \otimes
e_i^\ast \in (P\oplus P^{\ast c})\otimes (P\oplus P^{\ast c})$.
Then by \cite[Theorem 5.8]{HB1} and \cite[Theorem 3.10]{L}, $r$ is
a solution of the PCYBE in $P\oplus P^{\ast c}$ and Items
(\ref{cond1}) and (\ref{cond2}) in Theorem \ref{Poisson-thm3}
hold. Hence Items (\ref{cond1})-(\ref{cond4}) in Theorem
\ref{Poisson-thm3} hold. Since $r$ is a solution of the PCYBE in
$P\oplus P^{\ast c}$, Item (\ref{cond5}) in Theorem
\ref{Poisson-thm3} is reduced to
\begin{eqnarray}
\label{condd}\sum_i(({\mathfrak{ad}_P(r_i)}_{I\otimes I\otimes \partial} \otimes
I)({\mathfrak{L}_P(a)}_{-\partial^{\otimes^2}\otimes I} \otimes I-I\otimes
{\mathfrak{L}_P(a)}_{-\partial^{\otimes^2}\otimes I})(r+\tau r)\otimes
l_i)=0.
\end{eqnarray} By Item (\ref{cond2}) in Theorem \ref{Poisson-thm3}, Eq. (\ref{condd}) holds. Therefore by Theorem \ref{Poisson-thm3}, there is a
coboundary Poisson conformal bialgebra structure on $P\oplus
P^{\ast c}$ obtained from $r$.
\end{proof}

\delete{In this subsection, some results about the relations between skew-symmetric solutions of Poisson conformal Yang-Baxter equation and $\mathcal{O}$-operators on Poisson conformal algebras are presented.}
\begin{defi}
Let $(V, \rho_P, l_P)$  be a representation of a Poisson conformal
algebra $(P, [\cdot_\lambda \cdot], \cdot_\lambda \cdot)$. If a
${\bf k}[\partial]$-module homomorphism $T: V\rightarrow P$ is
both an $\mathcal{O}$-operator on the commutative associative
conformal algebra $(P,\cdot_\lambda \cdot)$ associated with the
representation $(V, l_P)$ and an $\mathcal{O}$-operator on the Lie
conformal algebra $(P, [\cdot_\lambda \cdot])$ associated with the
representation $(V, \rho_P)$, that is, $T$ satisfies the following
conditions respectively
\begin{eqnarray*}
&&{T(u)}_\lambda T(v)=T({l_P(T(u))}_\lambda v)+T({l_P(T(v))}_{-\lambda-\partial}u),\\
&&[{T(u)}_\lambda T(v)]=T({\rho_P(T(u))}_\lambda v)+T({\rho_P(T(v))}_{-\lambda-\partial}u),\;\;u, v\in V,
\end{eqnarray*}
then $T$ is called an {\bf $\mathcal{O}$-operator on the Poisson
conformal algebra} $(P, [\cdot_\lambda \cdot], \cdot_\lambda
\cdot)$ associated with the representation $(V, \rho_P, l_P)$.
\end{defi}
\delete{\begin{rmk} If $T: P\rightarrow P$ is an
$\mathcal{O}$-operator on the Poisson conformal algebra $(P,
[\cdot_\lambda \cdot], \cdot_\lambda \cdot)$  associated with the
module $(P, \ad_P, L_P)$, then $T$ is called a {\bf Rota-Baxter
operator} on the Poisson conformal algebra $(P, [\cdot_\lambda
\cdot], \cdot_\lambda \cdot)$.
\end{rmk}}

Let $(P, [\cdot_\lambda \cdot], \cdot_\lambda \cdot)$ be a Poisson
conformal algebra in which $P$ is free as a ${\bf
k}[\partial]$-module. Define a linear map $\varphi: P\otimes
P\rightarrow \text{Chom}(P^{\ast c}, P)$ as
\begin{eqnarray}
\varphi(a\otimes
b)_\lambda(f)=f_{-\lambda-\partial^P}(a)b,\;\;\ a, b\in
P, f\in P^{\ast c}.
\end{eqnarray}
Here $\partial^P$ represents the action of $\partial$ on $P$.
Obviously,  $\varphi$ is a ${\bf k}[\partial]$-module
homomorphism. Similar to \cite[Proposition 6.1]{BKL}, we show that
$\varphi$ is a ${\bf k}[\partial]$-module isomorphism.


Set $r=\sum_i r_i\otimes l_i\in P\otimes P$. By $\varphi$, we
associate a conformal linear map $T^r\in \text{Chom}(P^{\ast c},P)$ given
by
\begin{eqnarray*}
T^r_\lambda(f)=\sum_if_{-\lambda-\partial^P}(r_i)l_i,\;\;
f\in P^{\ast c}.
\end{eqnarray*}

\begin{pro}
Let $(P, [\cdot_\lambda \cdot], \cdot_\lambda \cdot)$ be a Poisson
conformal algebra in which $P$ is free as a ${\bf
k}[\partial]$-module and $r\in P\otimes P$ be skew-symmetric. Then
$r$ is a solution of the PCYBE in $P$ if and only if
$T^r_0=T^r_\lambda|_{\lambda=0}$ is an $\mathcal{O}$-operator on
the Poisson conformal algebra $(P, [\cdot_\lambda \cdot],
\cdot_\lambda \cdot)$  associated with the representation
$(P^{\ast c}, \mathfrak{ad}_P^\ast, -\mathfrak{L}_P^\ast)$.
\end{pro}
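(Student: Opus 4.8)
The plan is to exploit that both the PCYBE and the defining condition of an $\mathcal{O}$-operator on a Poisson conformal algebra are, by construction, conjunctions of a Lie-conformal part and a commutative-associative-conformal part, and that each part is already governed by a correspondence established in the literature. First I would unwind the two definitions. By the definition of the PCYBE, the skew-symmetric $r$ is a solution precisely when both $\eqref{Yang-Baxter-1}$, i.e. $[[r,r]]\equiv 0 \mod (\partial^{\otimes^3})$, and $\eqref{Yang-Baxter-2}$, i.e. $r\bullet r\equiv 0 \mod (\partial^{\otimes^3})$, hold. On the other side, by the definition of an $\mathcal{O}$-operator on a Poisson conformal algebra, $T^r_0$ is such an operator if and only if it is simultaneously an $\mathcal{O}$-operator on the Lie conformal algebra $(P,[\cdot_\lambda\cdot])$ associated with $(P^{\ast c},\mathfrak{ad}_P^\ast)$ and an $\mathcal{O}$-operator on the commutative associative conformal algebra $(P,\cdot_\lambda\cdot)$ associated with $(P^{\ast c},-\mathfrak{L}_P^\ast)$. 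Thus the statement splits into two independent equivalences, and it suffices to prove each separately and then take their conjunction.

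For the Lie conformal part, I would invoke the operator form of the CCYBE developed in \cite{HB, L}: for a skew-symmetric $r$, the quantity $[[r,r]]$ vanishes modulo $\partial^{\otimes^3}$ if and only if $T^r_0$ is an $\mathcal{O}$-operator on $(P,[\cdot_\lambda\cdot])$ associated with the coadjoint representation $(P^{\ast c},\mathfrak{ad}_P^\ast)$. For the commutative associative part, I would invoke the corresponding result for the associative conformal Yang-Baxter equation from \cite{HB1}: for a skew-symmetric $r$, $r\bullet r\equiv 0 \mod (\partial^{\otimes^3})$ if and only if $T^r_0$ is an $\mathcal{O}$-operator on $(P,\cdot_\lambda\cdot)$ associated with $(P^{\ast c},-\mathfrak{L}_P^\ast)$. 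In both cases the relevant operator is built from $r$ through the same isomorphism $\varphi$, so the two operators coincide with the $T^r_0$ of our statement; combining the two equivalences then immediately yields the proposition.

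The main point to verify — rather than a genuine obstacle — is that the operator attached to $r$ via $\varphi$ here is literally the one appearing in the two cited theorems, so that no rescaling or index relabeling intervenes, and that the skew-symmetry of $r$ is exactly what forces the representations in the two component results to be $\mathfrak{ad}_P^\ast$ and $-\mathfrak{L}_P^\ast$ respectively (the fact that $(P^{\ast c},\mathfrak{ad}_P^\ast,-\mathfrak{L}_P^\ast)$ is a genuine representation of the Poisson conformal algebra, established earlier, is what makes the combined notion meaningful). I would also observe that no cross term can arise: since the PCYBE is defined as the conjunction of $\eqref{Yang-Baxter-1}$ and $\eqref{Yang-Baxter-2}$, with no further equation coupling $[[r,r]]$ and $r\bullet r$, and the $\mathcal{O}$-operator condition is likewise a conjunction of two decoupled identities, the two halves may be treated in complete isolation. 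Hence the proof reduces cleanly to assembling the two known correspondences, and the argument is complete once their conjunction is taken.
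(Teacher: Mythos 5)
Your proposal is correct and matches the paper's own proof, which simply cites \cite[Theorem 3.1]{HB} for the Lie conformal half and \cite[Theorem 6.1]{HB1} for the commutative associative half and takes their conjunction, exactly as you do. Your additional checks (that the same map $\varphi$ produces the same $T^r_0$ in both cited results, and that no coupling equation arises) are the implicit content of the paper's "it follows directly" and are verified correctly.
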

\begin{proof}
It follows directly from \cite[Theorem 6.1]{HB1} and \cite[Theorem 3.1]{HB}.
\end{proof}

\delete{Recall that a conformal bilinear form $\langle \cdot,\cdot\rangle_\lambda$ on a Lie conformal algebra $R$ is call {\bf invariant}, if
for any $a$, $b$, $c\in R$,
\begin{eqnarray*}
\langle[a_\mu b], c\rangle_\lambda =\langle a, [b_{\lambda-\partial} c]\rangle_\mu=-\langle a, [c_{-\lambda} b]\rangle_\mu.
\end{eqnarray*}
This definition can be referred to \cite{L}.

Let $(P, [\cdot_\lambda \cdot], \cdot_\lambda \cdot)$ be a finite Poisson conformal algebra with a non-degenerate conformal bilinear form $\langle \cdot,\cdot\rangle_\lambda$, and $r=\sum_i r_i\otimes l_i\in P\otimes P$. Similar to that in Section 6, we can define a conformal linear map $T^r\in Cend(P)$ as $\langle r, u\otimes v\rangle_{(\lambda,\mu)}=\langle T^r_{\lambda-\partial}(u),v\rangle_\mu$. Then by Theorem \ref{th2} and Corollary 3.3 in \cite{HB}, we can obtain the following proposition.

\begin{pro}
Let $(P, [\cdot_\lambda \cdot], \cdot_\lambda \cdot)$ be a finite Poisson conformal algebra with a non-degenerate symmetric conformal bilinear form $\langle \cdot,\cdot\rangle_\lambda$ such that this conformal bilinear form is invariant both on the Lie conformal algebra $(P, [\cdot_\lambda \cdot])$ and on the associative conformal algebra $(P, \cdot_\lambda \cdot)$. Assume that $r\in P\otimes P$ is antisymmetric. Then $r$ is a solution of Poisson conformal Yang-Baxter equation if and only if $T^r_0=T^r_\lambda|_{\lambda=0}$ is a Rota-Baxter operator of $(P, [\cdot_\lambda \cdot], \cdot_\lambda \cdot)$.
\end{pro}}

Suppose that $(V, \rho_P, l_P)$ is a representation of $(P,
[\cdot_\lambda \cdot], \cdot_\lambda \cdot)$, in which $V$ and $P$ are free
as ${\bf k}[\partial]$-modules. Note that $(V^{\ast c},
\rho^\ast_P, -l_P^\ast)$ is also a representation of $(P,
[\cdot_\lambda \cdot], \cdot_\lambda \cdot)$. By \cite[Proposition
6.1]{BKL}, $V^{\ast c}\otimes P\cong \text{Chom}(V,P)$ as ${\bf
k}[\partial]$-modules through the isomorphism $\varphi$ defined as
$$\varphi(f\otimes a)_\lambda
v=f_{\lambda+\partial^P}(v)a,\;\;a\in P, v\in V, f\in V^{\ast
c},$$ where $\partial^p$ represents the action of $\partial$ on
$P$. By the ${\bf k}[\partial]$-module actions on $V^{\ast
c}\otimes P$, we also get $V^{\ast c}\otimes P\cong P\otimes
V^{\ast c}$ as ${\bf k}[\partial]$-modules. Therefore as ${\bf
k}[\partial]$-modules, $\text{Chom}(V,P)\cong P\otimes V^{\ast
c}$. Consequently, for any $T\in \text{Chom}(V,P)$, we associate
an $r_T\in P\otimes V^{\ast c}\subset (P\ltimes_{\rho_P^\ast,
-l_P^\ast} V^{\ast c})\otimes (P\ltimes_{\rho_P^\ast, -l_P^\ast}
V^{\ast c})$.

\begin{thm}\label{Poisson-Yang-Baxter-1}
Let $(P, [\cdot_\lambda \cdot], \cdot_\lambda \cdot)$ be a Poisson
conformal algebra and $(V, \rho_P, l_P)$ be a representation.
Assume that $P$ and $V$ are free as ${\bf k}[\partial]$-modules.
Let $T\in \text{Chom}(V,P)$ and $r_T\in P\otimes V^{\ast c}\in
(P\ltimes_{\rho^\ast_P, -l_P^\ast}V )\otimes
(P\ltimes_{\rho^\ast_P, -l_P^\ast}V) $ be the corresponding
element of $T$. Then $r=r_T-\tau r_T$ is a skew-symmetric solution
of the PCYBE in $P\ltimes_{\rho^\ast_P, -l_P^\ast}V$ if and only
if $T_0=T_\lambda|_{\lambda=0}$ is an $\mathcal{O}$-operator on
$(P, [\cdot_\lambda \cdot], \cdot_\lambda \cdot)$ associated with
$(V, \rho_P, l_P)$.
\end{thm}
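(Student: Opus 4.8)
The plan is to split the Poisson conformal Yang--Baxter equation into its Lie conformal half and its commutative associative half, and to handle each half by the corresponding operator criterion already available. By definition, $r$ solves the PCYBE exactly when both $[[r,r]]\equiv 0$ and $r\bullet r\equiv 0$ modulo $(\partial^{\otimes^3})$. The expression $[[r,r]]$ is assembled only from the $\lambda$-bracket, while $r\bullet r$ is assembled only from the $\lambda$-product. In the semidirect product $P\ltimes_{\rho_P^\ast,-l_P^\ast}V^{\ast c}$ the underlying Lie conformal algebra is the semidirect product $P\ltimes_{\rho_P^\ast}V^{\ast c}$, depending only on $\rho_P^\ast$, and the underlying commutative associative conformal algebra is $P\ltimes_{-l_P^\ast}V^{\ast c}$, depending only on $-l_P^\ast$. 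Hence the two equations defining the PCYBE genuinely decouple.

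First I would apply \cite[Theorem 3.1]{HB} to the Lie conformal structure. Writing $r=r_T-\tau r_T$ with $r_T$ the image of $T$ under the ${\bf k}[\partial]$-module identification $\text{Chom}(V,P)\cong P\otimes V^{\ast c}$ furnished by $\varphi$, that theorem states that the skew-symmetric $r$ solves the CCYBE (equivalently $[[r,r]]\equiv 0$ modulo $(\partial^{\otimes^3})$) in $P\ltimes_{\rho_P^\ast}V^{\ast c}$ if and only if $T_0=T_\lambda|_{\lambda=0}$ is an $\mathcal{O}$-operator on the Lie conformal algebra $(P,[\cdot_\lambda\cdot])$ associated with $(V,\rho_P)$, that is, $[{T_0(u)}_\lambda T_0(v)]=T_0({\rho_P(T_0(u))}_\lambda v)+T_0({\rho_P(T_0(v))}_{-\lambda-\partial}u)$ for all $u,v\in V$.

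Next I would apply \cite[Theorem 6.1]{HB1} to the associative conformal structure, using the very same $r_T$ and the same $\varphi$. This gives that $r$ solves the associative conformal Yang--Baxter equation (equivalently $r\bullet r\equiv 0$ modulo $(\partial^{\otimes^3})$) in $P\ltimes_{-l_P^\ast}V^{\ast c}$ if and only if $T_0$ is an $\mathcal{O}$-operator on the commutative associative conformal algebra $(P,\cdot_\lambda\cdot)$ associated with $(V,l_P)$, that is, ${T_0(u)}_\lambda T_0(v)=T_0({l_P(T_0(u))}_\lambda v)+T_0({l_P(T_0(v))}_{-\lambda-\partial}u)$ for all $u,v\in V$.

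Combining the two equivalences, $r$ is a solution of the PCYBE if and only if $T_0$ satisfies both operator identities simultaneously, which is precisely the definition of $T_0$ being an $\mathcal{O}$-operator on the Poisson conformal algebra $(P,[\cdot_\lambda\cdot],\cdot_\lambda\cdot)$ associated with $(V,\rho_P,l_P)$. The decisive structural point is that a single element $r_T$, hence a single map $T$, governs both halves at once, so the two operator conditions are forced onto the same map. The main thing to verify carefully is therefore purely a matter of bookkeeping: that the Lie conformal and commutative associative conformal parts of the semidirect product Poisson conformal algebra are exactly the semidirect products entering the hypotheses of the two cited theorems, and that the isomorphism $\varphi$ used to build $r_T$ is the one used in both. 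Once this alignment is confirmed, no additional computation is needed.
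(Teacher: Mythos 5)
Your proof is correct and is essentially the paper's own argument: the paper proves this theorem precisely by decoupling the PCYBE into its Lie conformal and commutative associative conformal halves and invoking the semidirect-product operator criteria from \cite{HB} and \cite{HB1}. One small slip: the results you want are the semidirect-product versions \cite[Theorem 3.7]{HB} and \cite[Theorem 6.7]{HB1} (which is what you actually describe), not \cite[Theorem 3.1]{HB} and \cite[Theorem 6.1]{HB1}, which concern solutions in the ambient algebra $P$ itself with the coadjoint representation.
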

\begin{proof}
It follows directly from \cite[Theorem 6.7]{HB1} and \cite[Theorem 3.7]{HB}.
\end{proof}

\delete{Finally, we can present a similar result as Theorem \ref{thms1}.
\begin{pro}
Let $(P, [\cdot_\lambda \cdot], \cdot_\lambda \cdot)$ be a finite Poisson conformal algebra  which is free as a $\mathbb{C}[\partial]$-module and $r\in P\otimes P$ is non-degenerate. Then $r$ is a non-degenerate antisymmetric solution of Poisson conformal Yang-Baxter
equation if and only if the bilinear map given by
\begin{eqnarray*}
\alpha_\lambda(a,b)=(T_0^r)^{-1}(a)_\lambda (b),~~~~~~~~~a,~~b\in P,
\end{eqnarray*}
is a conformal Connes cocycle and satisfies
\begin{eqnarray*}
\alpha_\lambda(a,[b_\mu c])-\alpha_\mu(b, [a_\lambda c])=\alpha_{\lambda+\mu}([a_\lambda b], c), ~~~~~~a,~~b,~~c\in P,
\end{eqnarray*}
where $T^r\in \text{Chom}(P^{\ast c},P)$ is the element corresponding to $r$ through the isomorphism $P\otimes P\cong \text{Chom}(P^{\ast c}, P)$.
\end{pro}}

Recall \cite{HL1} that a {\bf left-symmetric conformal algebra}
$(P, \circ_\lambda)$ is a ${\bf k}[\partial]$-module $P$ together with
a {\bf k}-bilinear  map $\circ_\lambda: P\times P\rightarrow
P[\lambda]$ such that the following condition holds.
\begin{eqnarray}
&&(\partial a)\circ_\lambda b=-\lambda a\circ_\lambda b,\;\; a\circ_\lambda (\partial b)=(\partial+\lambda)a\circ_\lambda b,\\
&&a\circ_\lambda (b\circ_\mu c)-(a\circ_\lambda b)\circ_{\lambda+\mu}c=b\circ_\mu (a\circ_\lambda c)-(b\circ_\mu a)\circ_{\lambda+\mu}c,\;\;a, b, c\in P.
\end{eqnarray}

\begin{defi}
A {\bf Zinbiel conformal algebra} $(P, \succ_\lambda)$ is a ${\bf
k}[\partial]$-module $P$ together with a {\bf k}-bilinear map
$\succ_\lambda: P\times P\rightarrow P[\lambda]$ such that the
following condition holds.
\begin{eqnarray}
&&(\partial a)\succ_\lambda b=-\lambda a\succ_\lambda b,\;\; a\succ_\lambda (\partial b)=(\partial+\lambda)a\succ_\lambda b,\\
&&a\succ_\lambda(b\succ_\mu c)=(a\succ_\lambda b+b\succ_{-\lambda-\partial}a)\succ_{\lambda+\mu}c,\;\;a, b, c\in P.
\end{eqnarray}

A {\bf pre-Poisson conformal algebra} is a triple $(P, \circ_\lambda,\succ_\lambda)$ such that $(P, \succ_\lambda)$ is a Zinbiel conformal algebra, $(P, \circ_\lambda)$ is a left-symmetric conformal algebra and the following conditions hold.
\begin{eqnarray}
\label{pre-P1}(a\circ_\lambda b-b\circ_{-\lambda-\partial}a)\succ_{\lambda+\mu}c=a\circ_\lambda(b\succ_\mu c)-b\succ_\mu(a\circ_\lambda c),\\
\label{pre-P2}(a\succ_\lambda b+b\succ_{-\lambda-\partial}a)\circ_{\lambda+\mu}c=a\succ_\lambda(b\circ_\mu c)+b\succ_\mu(a\circ_\lambda c),\;\;a, b, c\in P.
\end{eqnarray}
\end{defi}
\delete{\begin{rmk}
This is a conformal analog of pre-Poisson algebra given in \cite{A3}.
\end{rmk}}

Then the following conclusion follows immediately.
\begin{pro}\label{pre-Poisson-1}
Let $(P, \circ_\lambda, \succ_\lambda)$ be a pre-Poisson conformal algebra. Define
\begin{eqnarray}\label{eq:48}
[a_\lambda b]:=a\circ_\lambda b-b\circ_{-\lambda-\partial}a, \;\;a_\lambda b:=a\succ_\lambda b+b\succ_{-\lambda-\partial}a,\; \;\;a, b\in P.
\end{eqnarray}
Then $(P,[\cdot_\lambda \cdot], \cdot_\lambda \cdot)$ is a Poisson
conformal algebra, which is called the {\bf associated Poisson
conformal algebra} of $(P, \circ_\lambda, \succ_\lambda)$.
Moreover, $(P,  \mathfrak{L}_\circ, \mathfrak{L}_\succ)$ is a representation of
$(P,[\cdot_\lambda \cdot], \cdot_\lambda \cdot)$ and the identity
map $I: P\rightarrow P$ is an $\mathcal{O}$-operator on $(P,
[\cdot_\lambda \cdot], \cdot_\lambda \cdot)$ associated with $(P,
\mathfrak{L}_\circ, \mathfrak{L}_\succ)$, where ${\mathfrak{L}_\circ(a)}_\lambda b=a\circ_\lambda
b$ and ${\mathfrak{L}_\succ(a)}_\lambda b=a\succ_\lambda b$ for all $a$,
$b\in P$.
\end{pro}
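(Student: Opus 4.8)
The plan is to prove the three assertions in turn, exploiting the fact that the two compatibility conditions \eqref{pre-P1} and \eqref{pre-P2} are engineered precisely so that the last assertion becomes tautological. First I would record the two ``sub-adjacent'' facts. Since $(P,\circ_\lambda)$ is a left-symmetric conformal algebra, the commutator $[a_\lambda b]=a\circ_\lambda b-b\circ_{-\lambda-\partial}a$ makes $(P,[\cdot_\lambda\cdot])$ a Lie conformal algebra: skew-symmetry is immediate from the definition, and the conformal Jacobi identity is a direct rewriting of the left-symmetric axiom, which at the same time shows that $\mathfrak{L}_\circ$ is a representation of $(P,[\cdot_\lambda\cdot])$. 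Similarly, since $(P,\succ_\lambda)$ is a Zinbiel conformal algebra, the symmetrized product $a_\lambda b=a\succ_\lambda b+b\succ_{-\lambda-\partial}a$ makes $(P,\cdot_\lambda\cdot)$ a commutative associative conformal algebra: commutativity $a_\lambda b=b_{-\lambda-\partial}a$ is visible from the symmetric definition, and associativity is a rewriting of the Zinbiel axiom, which also shows that $\mathfrak{L}_\succ$ is a representation of $(P,\cdot_\lambda\cdot)$. These are the conformal analogues of the classical statements that a left-symmetric (resp. Zinbiel) algebra yields a Lie (resp. commutative associative) algebra with left multiplication acting as a representation.

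It then remains to check the Poisson compatibility $[a_\lambda(b_\mu c)]=[a_\lambda b]_{\lambda+\mu}c+b_\mu[a_\lambda c]$. I would expand both sides through the definitions in \eqref{eq:48} into triple products in $\circ$ and $\succ$. The terms in which a bracket is $\succ$-multiplied, such as $[a_\lambda b]\succ_{\lambda+\mu}c$, are rewritten using \eqref{pre-P1}; after this, the two occurrences of $b\succ_\mu(a\circ_\lambda c)$ cancel. The terms in which a symmetrized product is $\circ$-multiplied, in particular $(b_\mu c)\circ_{-\lambda-\partial}a$ coming from the left-hand side, are rewritten using \eqref{pre-P2}, and the residual $\circ$-of-$\succ$ term $a\circ_\lambda(c\succ_{-\mu-\partial}b)$ is reduced once more by \eqref{pre-P1}. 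After these substitutions every resulting monomial occurs twice with opposite sign, so the two sides coincide and $(P,[\cdot_\lambda\cdot],\cdot_\lambda\cdot)$ is a Poisson conformal algebra.

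For the representation claim, the two sub-adjacent facts already give that $\mathfrak{L}_\circ$ and $\mathfrak{L}_\succ$ represent the Lie and commutative associative parts, so only the two mixed axioms \eqref{eq-PM1} and \eqref{eq-PM2} remain. Substituting $\rho_P=\mathfrak{L}_\circ$ and $l_P=\mathfrak{L}_\succ$, one sees that \eqref{eq-PM2} is literally \eqref{pre-P1}, while \eqref{eq-PM1} coincides with \eqref{pre-P2} after relabeling the $\lambda$-variables; hence $(P,\mathfrak{L}_\circ,\mathfrak{L}_\succ)$ is a representation. Finally, inserting $T=I$, $\rho_P=\mathfrak{L}_\circ$, $l_P=\mathfrak{L}_\succ$ into the two defining identities of an $\mathcal{O}$-operator reduces them exactly to the two equations of \eqref{eq:48}, so $I$ is tautologically an $\mathcal{O}$-operator. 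I expect the only real obstacle to be the conformal $\partial$-bookkeeping: in the Leibniz computation and in matching \eqref{eq-PM1} with \eqref{pre-P2}, the derivation $\partial$ sits inside the $\lambda$-subscripts, so it must be transported through the $\succ$- and $\circ$-products via the sesquilinearity relations until the subscripts of corresponding monomials genuinely agree before the cancellations can be read off.
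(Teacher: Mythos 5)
Your proposal is correct and is precisely the routine verification the paper has in mind: the paper offers no written proof (it states the result ``follows immediately''), and your argument --- sub-adjacent Lie conformal algebra from the left-symmetric axiom, commutative associative conformal algebra from the Zinbiel axiom, the Leibniz rule via Eqs.~(\ref{pre-P1})--(\ref{pre-P2}), and the observation that Eq.~(\ref{eq-PM2}) is Eq.~(\ref{pre-P1}) while Eq.~(\ref{eq-PM1}) is Eq.~(\ref{pre-P2}) after relabeling, making $I$ tautologically an $\mathcal{O}$-operator --- is exactly that check, including the correct caution about transporting $\partial$ through the $\lambda$-subscripts via sesquilinearity.
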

\delete{
\cm{It seems that the notations $L_\circ, L_\succ$ might be
confused?}

\cm{In Theorem 5.21, there might be confusions}
\yy{Yes, following your suggestion, I change the notations.}}

\begin{pro}
Let $(P, [\cdot_\lambda \cdot], \cdot_\lambda \cdot)$ be a Poisson conformal algebra and $T: V\rightarrow P$ be an $\mathcal{O}$-operator on $(P, [\cdot_\lambda \cdot], \cdot_\lambda \cdot)$ associated with a representation $(V, \rho_P, l_P)$. Then the following $\lambda$-products
\begin{eqnarray*}
u\circ_\lambda v:={\rho_P(T(u))}_\lambda v,~~u\succ_\lambda v:={l_P(T(u))}_\lambda v,~~~~~u,~~v\in V,
\end{eqnarray*}
endow a pre-Poisson conformal algebra structure on $V$.
\end{pro}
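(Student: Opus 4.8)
The plan is to verify the three defining requirements of a pre-Poisson conformal algebra for $(V,\circ_\lambda,\succ_\lambda)$ one at a time, each time reducing the statement to a property already available for $T$, for the representation $(V,\rho_P,l_P)$, or from the Lie conformal and commutative associative conformal cases. The sesquilinearity relations are immediate and I would dispose of them first: since $T$, $\rho_P$ and $l_P$ are ${\bf k}[\partial]$-module homomorphisms and each $\rho_P(T(u))_\lambda$, $l_P(T(u))_\lambda$ is a conformal linear map, one gets $(\partial u)\circ_\lambda v=-\lambda\, u\circ_\lambda v$, $u\circ_\lambda(\partial v)=(\partial+\lambda)u\circ_\lambda v$, and the analogous pair for $\succ_\lambda$.

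Next I would establish that $(V,\succ_\lambda)$ is a Zinbiel conformal algebra and $(V,\circ_\lambda)$ is a left-symmetric conformal algebra. These are precisely the structures an $\mathcal{O}$-operator induces on its domain: because $T$ is an $\mathcal{O}$-operator on the commutative associative conformal algebra $(P,\cdot_\lambda\cdot)$ with respect to $(V,l_P)$, the product $u\succ_\lambda v=l_P(T(u))_\lambda v$ is Zinbiel (the commutative specialization of the associative conformal case, cf.\ \cite{HB1}); because $T$ is an $\mathcal{O}$-operator on the Lie conformal algebra $(P,[\cdot_\lambda\cdot])$ with respect to $(V,\rho_P)$, the product $u\circ_\lambda v=\rho_P(T(u))_\lambda v$ is left-symmetric (cf.\ \cite{HB}). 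In each case the verification is the conformal transcription of the classical $\mathcal{O}$-operator computation, using associativity of $\cdot_\lambda\cdot$ and the representation axiom for $l_P$ (respectively the Jacobi identity and the representation axiom for $\rho_P$).

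It then remains to verify the two compatibility conditions \eqref{pre-P1} and \eqref{pre-P2}. The key is to read off from the two $\mathcal{O}$-operator identities that $T$ intertwines the induced structures on $V$ with those on $P$, namely
\begin{eqnarray*}
T([u_\lambda v])=[T(u)_\lambda T(v)],\qquad T(u_\lambda v)=T(u)_\lambda T(v),
\end{eqnarray*}
where $[u_\lambda v]=u\circ_\lambda v-v\circ_{-\lambda-\partial}u$ and $u_\lambda v=u\succ_\lambda v+v\succ_{-\lambda-\partial}u$. For \eqref{pre-P1}, its left-hand side is then $l_P\big(T([u_\lambda v])\big)_{\lambda+\mu}w=l_P([T(u)_\lambda T(v)])_{\lambda+\mu}w$, while its right-hand side is $\rho_P(T(u))_\lambda\big(l_P(T(v))_\mu w\big)-l_P(T(v))_\mu\big(\rho_P(T(u))_\lambda w\big)$; hence \eqref{pre-P1} is exactly Eq.~\eqref{eq-PM2} of the representation $(V,\rho_P,l_P)$ evaluated at $a=T(u)$, $b=T(v)$, $v=w$. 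For \eqref{pre-P2}, the left-hand side becomes $\rho_P(T(u)_\lambda T(v))_{\lambda+\mu}w$ and the right-hand side is $l_P(T(u))_\lambda\big(\rho_P(T(v))_\mu w\big)+l_P(T(v))_\mu\big(\rho_P(T(u))_\lambda w\big)$, which is Eq.~\eqref{eq-PM1} at the same values after using the commutativity $T(u)_\lambda T(v)=T(v)_{-\lambda-\partial}T(u)$ to align the spectral parameters. The main obstacle is precisely this last alignment: one must track how the $\partial$ inside the subscripts $-\lambda-\partial$ (occurring both in $v\circ_{-\lambda-\partial}u$ and inside $T$ of the combined products) is transported through the ${\bf k}[\partial]$-linearity of $T$ and the conformal sesquilinearity of $\rho_P$ and $l_P$, so that after the substitution $a=T(u)$, $b=T(v)$, $v=w$ both sides land on the representation axioms on the nose.
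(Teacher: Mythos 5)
Your proof is correct and is precisely the direct verification that the paper dismisses as ``straightforward'': sesquilinearity is immediate, the Zinbiel and left-symmetric identities follow from the two $\mathcal{O}$-operator conditions combined with the representation axioms for $l_P$ and $\rho_P$, and the compatibilities \eqref{pre-P1} and \eqref{pre-P2} reduce exactly to Eqs.~\eqref{eq-PM2} and \eqref{eq-PM1} evaluated at $a=T(u)$, $b=T(v)$, with the $\lambda$/$\partial$ bookkeeping you describe. One remark: your intertwining identity $T([u_\lambda v])=[T(u)_\lambda T(v)]$ with $[u_\lambda v]=u\circ_\lambda v-v\circ_{-\lambda-\partial}u$ tacitly uses the convention $[T(u)_\lambda T(v)]=T(\rho_P(T(u))_\lambda v-\rho_P(T(v))_{-\lambda-\partial}u)$, which is the correct one (it is what makes the identity map an $\mathcal{O}$-operator in Proposition~\ref{pre-Poisson-1}, and without the minus sign the left-symmetry verification would fail), so the plus sign in the paper's displayed definition of an $\mathcal{O}$-operator on the Lie conformal part should be read as a typographical slip.
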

\begin{proof}
It is straightforward.
\end{proof}
\delete{\begin{pro}\label{pre-Poisson-1}
Let $(P, [\cdot_\lambda \cdot], \cdot_\lambda \cdot)$ be a Poisson conformal algebra. There is a pre-Poisson conformal algebra on $P$ such that its subjacent Poisson conformal algebra is $(P, [\cdot_\lambda \cdot], \cdot_\lambda \cdot)$ if and only if there exists a bijective $\mathcal{O}$-operator on $(P, [\cdot_\lambda \cdot], \cdot_\lambda \cdot)$ associated with some module $(V, \rho_P, l_P)$.
\end{pro}
\begin{proof}
Let $(P, \circ_\lambda, \succ_\lambda)$ be a pre-Poisson conformal algebra and $(P, [\cdot_\lambda \cdot], \cdot_\lambda \cdot)$ be the associated Poisson conformal algebra. Then it is easy to see that the identity map $I: P\rightarrow P$ is a bijective $\mathcal{O}$-operator on $(P, [\cdot_\lambda \cdot], \cdot_\lambda \cdot)$ associated with $(P,  L_\circ, L_\succ)$.

Conversely, suppose that there exists a bijective $\mathcal{O}$-operator $T$ on $(P, [\cdot_\lambda \cdot], \cdot_\lambda \cdot)$ associated with a module $(V, \rho_P, l_P)$. Then
\begin{eqnarray*}
a\circ_\lambda b=T({\rho_P(a)}_\lambda T^{-1}(b)),~~a_\lambda b=T({l_P(a)}_\lambda T^{-1}(b)),~~~~~~~~~~a,~~b\in P,
\end{eqnarray*}
define a pre-Poisson conformal algebra such that $(P, [\cdot_\lambda \cdot], \cdot_\lambda \cdot)$ is its subjacent Poisson conformal algebra.
\end{proof}}
\begin{thm}\label{constr-PCB-pre-P}
Let $(P, \circ_\lambda, \succ_\lambda)$ be a pre-Poisson conformal
algebra in which $P$ is free as a ${\bf k}[\partial]$-module and
$(P, [\cdot_\lambda \cdot], \cdot_\lambda\cdot)$ be the associated
Poisson conformal algebra of $(P, \circ_\lambda, \succ_\lambda)$.
Then $r=\sum_{i=1}^n(e_i\otimes e_i^\ast-e_i^\ast \otimes e_i)$ is
a solution of the PCYBE in $P\ltimes_{\mathfrak{L}_\circ^\ast,
-\mathfrak{L}_\succ^\ast}P^{\ast c}$, where $\{e_1,\ldots, e_n\}$ is a ${\bf
k}[\partial]$-basis of $P$ and $\{e_1^\ast, \ldots, e_n^\ast\}$ is
the dual ${\bf k}[\partial]$-basis of $P^{\ast c}$.
\end{thm}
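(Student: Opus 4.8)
The plan is to deduce the statement directly from the machinery already in place, namely Theorem~\ref{Poisson-Yang-Baxter-1} together with Proposition~\ref{pre-Poisson-1}. By Proposition~\ref{pre-Poisson-1}, the triple $(P, \mathfrak{L}_\circ, \mathfrak{L}_\succ)$ is a representation of the associated Poisson conformal algebra $(P, [\cdot_\lambda\cdot], \cdot_\lambda\cdot)$, and the identity map $I\colon P\to P$ is an $\mathcal{O}$-operator on $(P, [\cdot_\lambda\cdot], \cdot_\lambda\cdot)$ associated with this representation. Thus I would apply Theorem~\ref{Poisson-Yang-Baxter-1} with $V = P$, $\rho_P = \mathfrak{L}_\circ$ and $l_P = \mathfrak{L}_\succ$, so that the relevant semidirect product is exactly $P\ltimes_{\mathfrak{L}_\circ^\ast, -\mathfrak{L}_\succ^\ast}P^{\ast c}$.

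The one substantive step is to check that the element $r_T\in P\otimes P^{\ast c}$ corresponding to the identity $\mathcal{O}$-operator, under the chain of ${\bf k}[\partial]$-module isomorphisms $\text{Chom}(P,P)\cong P^{\ast c}\otimes P\cong P\otimes P^{\ast c}$ recalled before Theorem~\ref{Poisson-Yang-Baxter-1}, is precisely $r_T = \sum_{i=1}^n e_i\otimes e_i^\ast$. To see this I would start from $r_T = \sum_i e_i\otimes e_i^\ast$, apply the flip to obtain $\sum_i e_i^\ast\otimes e_i\in P^{\ast c}\otimes P$, and then feed this into the isomorphism $\varphi(f\otimes a)_\lambda v = f_{\lambda+\partial^P}(v)a$. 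Using the dual-basis relations $(e_i^\ast)_\lambda(e_j)=\delta_{ij}$ and $(e_i^\ast)_\lambda(\partial b)=\lambda (e_i^\ast)_\lambda(b)$, a short computation gives the conformal linear map $T_\lambda(p(\partial)e_j)=p(\lambda+\partial)e_j$; in particular $T$ is a genuine element of $\text{Chom}(P,P)$ and its value at $\lambda=0$ is $T_0=T_\lambda|_{\lambda=0}=I$, the identity ${\bf k}[\partial]$-module homomorphism.

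With this identification in place the conclusion is immediate: since $T_0=I$ is an $\mathcal{O}$-operator associated with $(P,\mathfrak{L}_\circ,\mathfrak{L}_\succ)$ by Proposition~\ref{pre-Poisson-1}, Theorem~\ref{Poisson-Yang-Baxter-1} yields that $r = r_T-\tau r_T = \sum_{i=1}^n(e_i\otimes e_i^\ast - e_i^\ast\otimes e_i)$ is a skew-symmetric solution of the PCYBE in $P\ltimes_{\mathfrak{L}_\circ^\ast, -\mathfrak{L}_\succ^\ast}P^{\ast c}$, which is exactly the claim. I expect the only real pitfall to be bookkeeping rather than anything conceptual: one must keep the three conventions in play — the conformal dual pairing, the substitution $\lambda\mapsto\lambda+\partial^P$ defining $\varphi$, and the flip identifying $P^{\ast c}\otimes P$ with $P\otimes P^{\ast c}$ — carefully aligned, so as to confirm that $\sum_i e_i\otimes e_i^\ast$ corresponds to the identity operator itself and not to some $\partial$-twisted variant. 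Once the substitution is handled correctly and the evaluation at $\lambda=0$ is taken, no new conformal identities remain to be verified, since everything has been reduced to the already-established Theorem~\ref{Poisson-Yang-Baxter-1} and Proposition~\ref{pre-Poisson-1}.
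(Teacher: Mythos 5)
Your proposal is correct and follows exactly the paper's own route: the paper's proof consists precisely of noting that the identity map $I\colon P\to P$ corresponds to $\sum_{i=1}^n e_i\otimes e_i^\ast$ and then citing Proposition~\ref{pre-Poisson-1} and Theorem~\ref{Poisson-Yang-Baxter-1}. Your dual-basis computation showing $T_\lambda(p(\partial)e_j)=p(\lambda+\partial)e_j$, hence $T_0=I$, is an accurate verification of the identification that the paper simply asserts.
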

\begin{proof}
Note that the identity map $I:P\rightarrow P$ corresponds to
$\sum_{i=1}^ne_i\otimes e_i^\ast$. Then it follows directly from
Proposition \ref{pre-Poisson-1} and Theorem
\ref{Poisson-Yang-Baxter-1}.
\end{proof}

\delete{
\begin{pro}
Let $(P, [\cdot_\lambda \cdot], \cdot_\lambda \cdot)$ be a Poisson conformal algebra and $T: V\rightarrow P$ be an $\mathcal{O}$-operator on $(P, [\cdot_\lambda \cdot], \cdot_\lambda \cdot)$ associated with a representation $(V, \rho_P, l_P)$. Then the following $\lambda$-products
\begin{eqnarray*}
u\circ_\lambda v={\rho_P(T(u))}_\lambda v,~~u\succ_\lambda v={l_P(T(u))}_\lambda v,~~~~~u,~~v\in V,
\end{eqnarray*}
endow a pre-Poisson conformal algebra structure on $V$. Moreover, $T(V)\subset P$ is a Poisson conformal subalgebra of $P$ and there is a natural pre-Poisson conformal algebra structure on $T(V)$ given as follows
\begin{eqnarray*}
T(u)\circ_\lambda T(v)=T(u\circ_\lambda v),~~~T(u)\succ_\lambda T(v)=T(u\succ_\lambda v),~~~~u,~~v\in V.
\end{eqnarray*}
In addition, the subjacent Poisson conformal algebra of $T(V)$ is a subalgebra of $(P, [\cdot_\lambda \cdot], \cdot_\lambda \cdot)$ and $T: V\rightarrow P$ is a homomorphism of pre-Poisson conformal algebras.
\end{pro}
\begin{proof}
It can be directly checked.
\end{proof}}

\section{Semi-classical limits of conformal formal deformations of commutative and cocommutative ASI conformal bialgebras}
We show that the semi-classical limits of conformal formal
deformations of commutative and cocommutative ASI conformal
bialgebras are Poisson conformal bialgebras.

First, we recall some notations about formal deformations
\cite{CBG, ES}.

Let ${\bf K}={\bf k}[[h]]$ be the formal power series algebra and $V$ be a vector space. Consider the set of formal series
\begin{eqnarray*}
V[[h]]:=\{\sum_{i=0}^\infty v_ih^i| v_i\in V\}.
\end{eqnarray*}
Let $\sum_{i=0}^\infty v_ih^i$, $\sum_{i=0}^\infty w_ih^i\in V[[h]]$ and $\sum_{i=0}^\infty k_i h^i\in {\bf K}$. Set
\begin{eqnarray*}
\sum_{i=0}^\infty v_ih^i+\sum_{i=0}^\infty w_ih^i:=\sum_{i=0}^\infty(v_i+w_i)h^i,\;\; (\sum_{i=0}^\infty k_i h^i ) (\sum_{i=0}^\infty v_ih^i):=\sum_{i=0}^\infty(\sum_{m+n=i}k_mv_n)h^i.
\end{eqnarray*}
Then $V[[h]]$ becomes a ${\bf K}$-module. $V[[h]]$ is called a {\bf topologically free ${\bf K}$-module} (with the $h$-adic topology).

\begin{defi}\cite{LZ}
Let $P$ be a vector space. An {\bf $h$-adic associative conformal algebra} is a topologically free ${\bf K}$-module $P[[h]]$ equipped with the $h$-adic topology, a ${\bf k}[\partial]$-module structure and a continuous ${\bf K}$-bilinear $\lambda$-product $\circ_\lambda^h: P[[h]]\times P[[h]]\rightarrow P[[h]][[\lambda]]$ such that for each $n\in \mathbb{N}$,
$(P[[h]]/h^nP[[h]], \overline{\circ}_\lambda^h)$ is an associative conformal algebra, where $\overline{\circ}_\lambda^h$ is the natural quotient map of $\circ_\lambda^h$, i.e.
\begin{eqnarray*}
(a+h^nP[[h]])\overline{\circ}_\lambda^h (b+h^nP[[h]]):=a\circ_\lambda^h b+h^nP[[h]],\;\;a, b\in P.
\end{eqnarray*}
Denote it by $(P[[h]], \circ_\lambda^h)$.
\end{defi}

Let $V$ be a ${\bf k}[\partial]$-module. Then $V[[h]]$ is endowed
with a natural ${\bf k}[\partial]$-module structure as follows.
\begin{eqnarray*}
\partial \sum_{i=0}^\infty v_ih^i:=\sum_{i=0}^\infty (\partial v_i)h^i,\;\; \sum_{i=0}^\infty v_ih^i\in V[[h]].
\end{eqnarray*}

\begin{defi}\cite{LZ}
Let $(P, \cdot_\lambda\cdot)$ be a commutative associative
conformal algebra. A {\bf conformal formal deformation} of $(P,
\cdot_\lambda\cdot)$ is an $h$-adic associative conformal algebra
$(P[[h]], \circ_\lambda^h)$ such that the following condition
holds.
\begin{eqnarray}\label{alg-defor}
a \circ_\lambda^h b=a_\lambda b+\sum_{i=1}^\infty h^i\{a_\lambda
b\}_{\mu_i},\;\; a,b\in P,
\end{eqnarray}
where $\{\cdot_\lambda \cdot\}_{\mu_i}: P\times P\rightarrow
P[\lambda]$ for $i\geq 1$ are $\bf k$-bilinear
maps.
\end{defi}

By the notion of $h$-adic associative conformal algebras, it is
straightforward to show
\begin{eqnarray}
\{\partial a_\lambda b\}_{\mu_i}=-\lambda \{a_\lambda b\}_{\mu_i},\;\; \{ a_\lambda \partial b\}_{\mu_i}=(\partial+\lambda)\{a_\lambda b\}_{\mu_i},\;\; a, b\in P, i\geq 1.
\end{eqnarray}

\begin{pro}\cite[Theorem 3.8]{LZ}\label{scl-A}
Let $(P[[h]], \circ_\lambda^h)$ be a conformal formal deformation of a commutative associative conformal algebra $(P, \cdot_\lambda\cdot)$. Define
\begin{eqnarray}\label{bracket}
[a_\lambda b]:=\frac{a\circ_\lambda^hb-b\circ_{-\lambda-\partial}^h a }{h}\;\text{(mod h)}=\{a_\lambda b\}_{\mu_1}-\{b_{-\lambda-\partial}a\}_{\mu_1},\;\; a, b\in P.
\end{eqnarray}
Then $(P, [\cdot_\lambda \cdot], \cdot_\lambda \cdot)$ is a Poisson conformal algebra, which is called the {\bf semi-classical limit} of $(P[[h]], \circ_\lambda^h)$.
\end{pro}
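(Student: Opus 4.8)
The plan is to recognize the semi-classical bracket as the leading $h$-term of the (exact) commutator of the deformed $\lambda$-product, and then to push the associative identities of $\circ_\lambda^h$ down to first order in $h$. Set
\[
[a_\lambda b]^h:=a\circ_\lambda^h b-b\circ_{-\lambda-\partial}^h a,\qquad a,b\in P[[h]].
\]
Because each quotient $(P[[h]]/h^nP[[h]],\overline{\circ}_\lambda^h)$ is an associative conformal algebra, two general facts hold modulo every $h^n$, hence exactly in $P[[h]]$: first, the commutator of an associative conformal algebra is a Lie conformal algebra, so $[\cdot_\lambda\cdot]^h$ satisfies the sesquilinearity relations, skew-symmetry and the Jacobi identity; and second, in any associative conformal algebra the commutator is a biderivation of the product, i.e.
\[
[a_\lambda (b\circ_\mu^h c)]^h=([a_\lambda b]^h)\circ_{\lambda+\mu}^h c+b\circ_\mu^h([a_\lambda c]^h).
\]
Both are verified by expanding the definitions and applying the associativity and sesquilinearity of $\circ_\lambda^h$.

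The role of the commutativity of $(P,\cdot_\lambda\cdot)$ is to annihilate the order-zero part of the commutator: since $a_\lambda b=b_{-\lambda-\partial}a$, the expansion $a\circ_\lambda^h b=a_\lambda b+\sum_{i\ge1}h^i\{a_\lambda b\}_{\mu_i}$ yields
\[
[a_\lambda b]^h=h\big(\{a_\lambda b\}_{\mu_1}-\{b_{-\lambda-\partial}a\}_{\mu_1}\big)+O(h^2)=h[a_\lambda b]+O(h^2),
\]
so the bracket in the statement is precisely the $h^1$-coefficient of $[\cdot_\lambda\cdot]^h$.

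It then remains to read off the axioms of a Poisson conformal algebra coefficientwise in $h$. Sesquilinearity and skew-symmetry are linear in the bracket, so their $h^1$-coefficients give the corresponding relations for $[\cdot_\lambda\cdot]$. For the Jacobi identity, a nested commutator $[a_\lambda[b_\mu c]^h]^h$ has leading term $h^2[a_\lambda[b_\mu c]]$ (each commutator contributing one factor of $h$), so the $h^2$-coefficient of the Jacobi identity for $[\cdot_\lambda\cdot]^h$ is exactly the Jacobi identity for $[\cdot_\lambda\cdot]$; together with the given commutative associative structure this makes $(P,[\cdot_\lambda\cdot],\cdot_\lambda\cdot)$ satisfy the first two requirements of the definition. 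Finally, the Poisson compatibility comes from the biderivation identity: its left side has leading term $h[a_\lambda(b_\mu c)]$ (the inner $\circ_\mu^h$ supplying $b_\mu c$ at order zero, the outer commutator supplying the factor $h$), while on the right the $\circ^h$-products contribute their order-zero part $\cdot_\lambda$ and the commutators contribute their $h^1$-part $[\cdot_\lambda\cdot]$; equating $h^1$-coefficients gives $[a_\lambda(b_\mu c)]=[a_\lambda b]_{\lambda+\mu}c+b_\mu[a_\lambda c]$.

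I expect the only real difficulty to be conformal-variable bookkeeping rather than anything conceptual: verifying the commutator Lie axioms and especially the biderivation identity requires tracking how the substitution $-\lambda-\partial$ interacts with the sesquilinearity relations of $\circ_\lambda^h$ when a $\partial$ is moved across a $\lambda$-product, and one must confirm that extracting $h$-coefficients is legitimate, which is exactly what the truncations $P[[h]]/h^nP[[h]]$ guarantee before passing to the inverse limit.
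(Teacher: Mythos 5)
Your proof is correct. Note that the paper itself gives no proof of this proposition --- it is imported verbatim as \cite[Theorem 3.8]{LZ} --- so there is no in-paper argument to compare against; your argument is the standard semi-classical-limit argument and it goes through. The structural move of working with the exact commutator $[\cdot_\lambda\cdot]^h$ on $P[[h]]$ (Lie conformal structure and the biderivation identity holding exactly because they hold modulo every $h^n$), then reading off the $h^1$-coefficient for sesquilinearity, skew-symmetry and the Leibniz compatibility and the $h^2$-coefficient for Jacobi, is sound and cleaner than verifying each axiom directly in terms of the first-order data $\{\cdot_\lambda\cdot\}_{\mu_1}$; commutativity of the undeformed product is used exactly where you use it, to make $[\cdot_\lambda\cdot]^h=O(h)$. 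The only genuine checks you correctly flag as bookkeeping are the substitution identities behind the commutator construction in the conformal setting, e.g.\ that associativity plus sesquilinearity yields $(b_\mu c)_{-\lambda-\partial}a=b_\mu(c_{-\lambda-\mu-\partial}a)$ and that the double substitution $-(-\lambda-\partial)-\partial=\lambda$ in skew-symmetry is legitimate (the latter needs the sesquilinearity of the $\{\cdot_\lambda\cdot\}_{\mu_i}$, which the paper records just before the proposition); with those in hand, coefficient extraction is justified by the truncations $P[[h]]/h^nP[[h]]$, exactly as you say.
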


\begin{defi}
Let $P$ be a vector space. An {\bf $h$-adic associative conformal coalgebra} is a topologically free ${\bf K}$-module $P[[h]]$ equipped with the $h$-adic topology, a ${\bf k}[\partial]$-module structure and a continuous ${\bf K}$-linear and ${\bf k}[\partial]$-module homomorphism $\Delta_h: P[[h]]\rightarrow P[[h]]\widehat{\otimes} P[[h]]$ such that $(P[[h]], \Delta_h)$ is an associative conformal coalgebra, where $P[[h]]\widehat{\otimes} P[[h]]$ is the completion of $P[[h]]\otimes_{{\bf K}} P[[h]]$ under the $h$-adic topology.
Denote it by $(P[[h]], \Delta_h)$.
\end{defi}

\begin{defi}
Let $(P, \Delta)$ be a cocommutative associative conformal
coalgebra. A {\bf conformal formal deformation} of $(P, \Delta)$
is an $h$-adic associative conformal coalgebra $(P[[h]],
\Delta_h)$ such that the following condition holds.
\begin{eqnarray}\label{co-defor}
\Delta_h(a)=\Delta(a)+\sum_{i=1}^\infty \Delta_i(a)h^i,\;\;a\in P, 
\end{eqnarray}
where $\Delta_i: P\rightarrow P\otimes P$ for $i\geq 1$ are ${\bf k}[\partial]$-module homomorphisms.
\end{defi}

\delete{Since $\Delta_h$ is a ${\bf k}[\partial]$-module homomorphism,
$\Delta_i$ is also a ${\bf k}[\partial]$-module homomorphism for
each $i\geq 1$. \cm{whether we use this property directly in the
above definition?}}

\begin{pro}\label{scl-coA}
Let $(P[[h]], \Delta_h)$ be a conformal formal deformation of a cocommutative associative conformal coalgebra $(P, \Delta)$. Define
\begin{eqnarray}\label{cobracket}
\delta(a):=\frac{\Delta_h(a)-\tau \Delta_h(a)}{h}\;\;({\text mod}\;\; h)=\Delta_1-\tau\Delta_1,\;\; a\in P.
\end{eqnarray}
Then $(P, \delta, \Delta)$ is a Poisson conformal coalgebra, which is called the {\bf semi-classical limit} of $(P[[h]], \Delta_h)$.
\end{pro}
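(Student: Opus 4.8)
The plan is to check the three conditions in the definition of a Poisson conformal coalgebra for $(P,\delta,\Delta)$: that $(P,\Delta)$ is a cocommutative associative conformal coalgebra, that $(P,\delta)$ is a Lie conformal coalgebra, and that the compatibility (\ref{coalgebra-1}) holds. The first is precisely the hypothesis. Since every $\Delta_i$ in (\ref{co-defor}) is a ${\bf k}[\partial]$-module homomorphism and $\tau$ is ${\bf k}[\partial]$-linear (because $\partial$ acts diagonally on $P\otimes P$), the map $\delta=\Delta_1-\tau\Delta_1$ is again a ${\bf k}[\partial]$-module homomorphism, so it is an admissible cobracket. Everything else is extracted from the coassociativity $(I\otimes\Delta_h)\Delta_h(a)=(\Delta_h\otimes I)\Delta_h(a)$ of the associative conformal coalgebra $(P[[h]],\Delta_h)$: expanding in powers of $h$ and comparing coefficients yields, for each $n$, the identity $(*_n)$: $\sum_{i+j=n}(I\otimes\Delta_i)\Delta_j(a)=\sum_{i+j=n}(\Delta_i\otimes I)\Delta_j(a)$, with $\Delta_0=\Delta$.

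For the Lie conformal coalgebra axioms, skew-cocommutativity $\delta=-\tau\delta$ is immediate from $\tau^2=I$. The co-Jacobi identity $(I\otimes\delta)\delta-(\tau\otimes I)(I\otimes\delta)\delta=(\delta\otimes I)\delta$ is the heart of the matter, and I would derive it from the order-two identity $(*_2)$, i.e. $(I\otimes\Delta)\Delta_2+(I\otimes\Delta_1)\Delta_1+(I\otimes\Delta_2)\Delta=(\Delta\otimes I)\Delta_2+(\Delta_1\otimes I)\Delta_1+(\Delta_2\otimes I)\Delta$. Forming the antisymmetric combination dictated by the co-Jacobiator (summing over the flips $\tau_{12}$ and $\tau_{23}$ with signs), the four $\Delta_2$-terms become cosymmetric in the tensor slots that are being antisymmetrized --- this is exactly where cocommutativity $\Delta=\tau\Delta$ of the undeformed coproduct is used --- and therefore cancel. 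What survives are the quadratic terms $(I\otimes\Delta_1)\Delta_1$ and $(\Delta_1\otimes I)\Delta_1$ together with their flips, which reassemble into $(I\otimes\delta)\delta-(\tau\otimes I)(I\otimes\delta)\delta-(\delta\otimes I)\delta$, giving co-Jacobi. This is the precise dual of the derivation of the Jacobi identity for the semi-classical Lie bracket underlying Proposition \ref{scl-A}.

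The compatibility (\ref{coalgebra-1}) is first order in the deformation and I would read it off from $(*_1)$: $(I\otimes\Delta)\Delta_1+(I\otimes\Delta_1)\Delta=(\Delta\otimes I)\Delta_1+(\Delta_1\otimes I)\Delta$. Substituting $\delta=\Delta_1-\tau\Delta_1$ into the three maps $(I\otimes\Delta)\delta$, $(\delta\otimes I)\Delta$ and $(\tau\otimes I)(I\otimes\delta)\Delta$ appearing in (\ref{coalgebra-1}) and expanding, every resulting term is a composite of one copy of $\Delta_1$ and one copy of $\Delta$; inserting the cocommutativity relation $\Delta=\tau\Delta$ into the undeformed factor brings the $\tau\Delta_1$-contributions into the shape of the $\Delta_1$-terms of $(*_1)$, and (\ref{coalgebra-1}) collapses to exactly $(*_1)$. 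This is the dual of the Leibniz-type Poisson compatibility obtained from the first-order associativity relation in the commutative associative case.

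The genuine obstacle is the co-Jacobi step: the only delicate bookkeeping is to arrange the flips $\tau_{12},\tau_{23}$ so that the $\Delta_2$-contributions in the antisymmetrized $(*_2)$ annihilate by cocommutativity while the quadratic $\Delta_1$-blocks recombine correctly. If one is willing to assume in addition that $P$ is finitely generated free over ${\bf k}[\partial]$, the whole proposition can instead be obtained by duality: $\Delta_h$ then corresponds to a conformal formal deformation of the commutative associative conformal algebra $(P^{\ast c},\cdot_\lambda)$ dual to $(P,\Delta)$, its semi-classical limit is a Poisson conformal algebra by Proposition \ref{scl-A}, the resulting Lie bracket on $P^{\ast c}$ is dual to $\delta=\Delta_1-\tau\Delta_1$, and transporting back through the duality between Poisson conformal algebras on the conformal dual and Poisson conformal coalgebras yields that $(P,\delta,\Delta)$ is a Poisson conformal coalgebra.
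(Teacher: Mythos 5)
Your proof is correct, but it is organized genuinely differently from the paper's. You expand the coassociativity of $\Delta_h$ into the order-by-order coefficient identities $(*_n)$ and then run the standard semi-classical computation: co-Jacobi from the antisymmetrized $(*_2)$, and the compatibility \eqref{coalgebra-1} from $(*_1)$, with cocommutativity of $\Delta$ driving the cancellations. The paper instead works with \emph{exact} identities on $P[[h]]$: it sets $\delta_h:=\Delta_h-\tau\Delta_h$ and observes that, since $(P[[h]],\Delta_h)$ is coassociative, $(P[[h]],\delta_h)$ is a Lie conformal coalgebra and the Poisson compatibility holds exactly for the pair $(\delta_h,\Delta_h)$ --- the coalgebra duals of the facts that the commutator of any associative product is a Lie bracket and satisfies the Leibniz rule on the nose, with no cocommutativity needed. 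Cocommutativity enters only once, to guarantee $\delta_h=h\,\delta+O(h^2)$; then the $h^2$-coefficient of the exact co-Jacobi identity for $\delta_h$ is literally the co-Jacobi identity for $\delta$, and the $h$-coefficient of the exact compatibility is \eqref{coalgebra-1}. This entirely bypasses what you rightly call the genuine obstacle: no antisymmetrization of $(*_2)$ and no $\Delta_2$-cancellation ever occurs, because in any expression quadratic in $\delta_h$ the $\Delta_2$-contributions are pushed to order $h^3$ automatically. Your route buys a self-contained coefficient-level argument (the exact dual of the classical derivation of Jacobi in the semi-classical limit); the paper's route buys brevity and makes transparent which hypothesis is used where.

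One small inaccuracy in your sketch of the co-Jacobi step: the four $\Delta_2$-terms in the antisymmetrized $(*_2)$ do not all become individually symmetric. The terms $(I\otimes\Delta)\Delta_2$ and $(\Delta\otimes I)\Delta_2$ are symmetric in slots $(2,3)$, respectively $(1,2)$, by cocommutativity of the inner $\Delta$ and die termwise under the corresponding flip; but $(I\otimes\Delta_2)\Delta$ and $(\Delta_2\otimes I)\Delta$ are not symmetric in any pair of slots, and they cancel only \emph{against each other} in the full alternating sum, again using $\Delta=\tau\Delta$. This is the exact dual of the cancellation pattern for $\{a,b\}_2c$ and $a\{b,c\}_2$ in the algebra case, so it is routine, but it is a joint rather than termwise cancellation. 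Your duality alternative is also viable under the paper's standing finiteness and freeness conventions, though it requires checking that $\Delta_h$ dualizes to a conformal formal deformation of $(P^{\ast c},\cdot_\lambda\cdot)$ through the completion $P[[h]]\,\widehat{\otimes}\,P[[h]]$, which is more delicate than the direct computation it replaces.
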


\begin{proof}
Define a ${\bf K}$-linear map $\delta_h: P[[h]]\rightarrow P[[h]]\widehat{\otimes} P[[h]]$ by
\begin{eqnarray}
\delta_h(a):=\Delta_h(a)-\tau\Delta_h(a),\;\;a\in P.
\end{eqnarray}
Since $\Delta_h$ is a ${\bf k}[\partial]$-module homomorphism,
$\delta_h$ is a ${\bf k}[\partial]$-module homomorphism. Then $\delta$ is a ${\bf k}[\partial]$-module homomorphism. Since
$(P[[h]], \Delta_h)$ is an associative conformal coalgebra, it is
straightforward to show that $(P[[h]], \delta_h)$ is a Lie
conformal coalgebra and $(P[[h]], \delta_h, \Delta_h)$ is a
Poisson conformal coalgebra. Therefore, we obtain
\begin{eqnarray}
\label{d-coLie}(\delta_h\otimes I)\delta_h(a)=(I\otimes \delta_h)\delta_h(a)-(\tau\otimes I)(I\otimes \delta_h)\delta_h(a),\\
\label{d-cop}(I\otimes \Delta_h)\delta_h(a)=(\delta_h\otimes I)\Delta_h(a)+(\tau\otimes I)(I\otimes \delta_h)\Delta_h(a),\;\;a\in P.\end{eqnarray}
Considering the $h^2$-terms in Eq. (\ref{d-coLie}), we have
\begin{eqnarray*}
(\delta\otimes I)\delta(a)=(I\otimes \delta)\delta(a)-(\tau\otimes I)(I\otimes \delta)\delta(a),\;\;a\in P.
\end{eqnarray*}
Considering the $h$-terms in Eq. (\ref{d-cop}), we obtain
\begin{eqnarray*}
(I\otimes \Delta)\delta(a)=(\delta\otimes I)\Delta(a)+(\tau\otimes I)(I\otimes \delta)\Delta(a),\;\;a\in P.
\end{eqnarray*}
Therefore, $(P, \Delta, \delta)$ is a Poisson conformal coalgebra.
\end{proof}

\begin{defi}
Let $P$ be a vector space. An {\bf $h$-adic ASI conformal bialgebra} is a topologically free ${\bf K}$-module $P[[h]]$ equipped with the $h$-adic topology such that $(P[[h]], \circ_\lambda^h)$ is an $h$-adic associative conformal algebra, $(P[[h]], \Delta_h)$ is an $h$-adic associative conformal coalgebra and for each
$n\in \mathbb{N}$,
$(P[[h]]/h^nP[[h]], \overline{\circ}_\lambda^h, \overline{\Delta_h})$ is an ASI conformal bialgebra,  where $\overline{\Delta_h}$ is the natural quotient map of $\Delta_h$, i.e.
\begin{eqnarray*}
\overline{\Delta_h} (a+h^nP[[h]]):=\Delta_h(a)+h^nP[[h]]\widehat{\otimes}P[[h]],\;\;a\in P.
\end{eqnarray*}
Denote it by $(P[[h]], \circ_\lambda^h, \Delta_h)$.
\end{defi}

\begin{defi}
Let $(P, \cdot_\lambda\cdot, \Delta)$ be a commutative and cocommutative ASI conformal bialgebra. A {\bf conformal formal deformation} of $(P, \cdot_\lambda\cdot, \Delta)$ is an $h$-adic ASI conformal bialgebra $(P[[h]], \circ_\lambda^h, \Delta_h)$ such that Eqs. (\ref{alg-defor}) and (\ref{co-defor}) hold.
\end{defi}

\begin{thm}
Let $(P[[h]], \circ_\lambda^h, \Delta_h)$ be a conformal formal
deformation of a commutative and cocommutative ASI conformal
bialgebra $(P, \cdot_\lambda\cdot, \Delta)$. Then $(P,
[\cdot_\lambda \cdot],\cdot_\lambda\cdot, \delta,\Delta)$ is a
Poisson conformal bialgebra, where $[\cdot_\lambda \cdot]$ and
$\delta$ are defined by Eqs. {\rm (\ref{bracket})} and {\rm
(\ref{cobracket})} respectively.  This Poisson conformal bialgebra
is called the {\bf semi-classical limit} of $(P[[h]],
\circ_\lambda^h, \Delta_h)$.
\end{thm}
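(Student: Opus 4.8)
The plan is to check the five clauses in the definition of a Poisson conformal bialgebra for $(P,[\cdot_\lambda \cdot],\cdot_\lambda\cdot,\delta,\Delta)$, exploiting that the $h$-adic ASI conformal bialgebra axioms \eqref{1-thq} and \eqref{2-thq} for $(P[[h]],\circ_\lambda^h,\Delta_h)$ hold identically over ${\bf K}$ and reading off coefficients of powers of $h$. Three clauses come for free: $(P,[\cdot_\lambda \cdot],\cdot_\lambda\cdot)$ is a Poisson conformal algebra by Proposition \ref{scl-A}; $(P,\delta,\Delta)$ is a Poisson conformal coalgebra by Proposition \ref{scl-coA}; and $(P,\cdot_\lambda\cdot,\Delta)$ is a commutative and cocommutative ASI conformal bialgebra since it is precisely the degree-zero part of the deformation, i.e. the object being deformed. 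It therefore remains to establish the Lie conformal bialgebra condition for $(P,[\cdot_\lambda\cdot],\delta)$ and the two compatibility conditions \eqref{Poisson-bialgebra-1} and \eqref{Poisson-bialgebra-2}.

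For the two compatibilities I would substitute the expansions $a\circ_\lambda^h b=a_\lambda b+h\{a_\lambda b\}_{\mu_1}+O(h^2)$ from \eqref{alg-defor} and $\Delta_h=\Delta+h\Delta_1+O(h^2)$ from \eqref{co-defor} into \eqref{1-thq} and \eqref{2-thq}, noting that commutativity of $\cdot_\lambda\cdot$ gives $\mathfrak{R}_P(b)_\mu=\mathfrak{L}_P(b)_\mu$ at order zero, so that the $h^0$-coefficient of \eqref{1-thq} reproduces \eqref{thq1} and the $h^0$-coefficient of \eqref{2-thq} holds trivially by cocommutativity. The left-hand sides are then pinned down by two elementary collapses at order $h^1$: antisymmetrizing \eqref{1-thq} in the inputs via $a\circ_\lambda^h b-b\circ_{-\lambda-\partial}^h a=h[a_\lambda b]+O(h^2)$ from \eqref{bracket} turns $\Delta_h(a\circ_\lambda^h b)$ into $\Delta([a_\lambda b])$, the left-hand side of \eqref{Poisson-bialgebra-2}; applying $I-\tau$ to \eqref{1-thq} turns $\Delta_h(a\circ_\lambda^h b)$ into $\delta(a_\lambda b)$ by \eqref{cobracket}, the left-hand side of \eqref{Poisson-bialgebra-1}, since $(I-\tau)\Delta(\{a_\lambda b\}_{\mu_1})=0$. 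Matching the corresponding right-hand sides is where the order-zero data are used: the a priori appearing operators $\Delta_1$ and $\{\cdot_\lambda\cdot\}_{\mu_1}$ are reorganized, by means of \eqref{thq1}, the order-zero form of \eqref{2-thq}, and cocommutativity of $\Delta$, into the operators $\delta$, $\Delta$, $\mathfrak{ad}_P$ and $\mathfrak{L}_P$ occurring in \eqref{Poisson-bialgebra-1} and \eqref{Poisson-bialgebra-2}, with $\mathfrak{ad}_P(a)_\lambda=\{a_\lambda\cdot\}_{\mu_1}-\{\cdot_{-\lambda-\partial}a\}_{\mu_1}$ and $\delta=(I-\tau)\Delta_1$.

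For the Lie conformal bialgebra condition I would pass to the commutator $\lambda$-bracket $[a_\lambda b]_h:=a\circ_\lambda^h b-b\circ_{-\lambda-\partial}^h a$ and the cocommutator $\delta_h:=\Delta_h-\tau\Delta_h$ on $P[[h]]$: since $\circ_\lambda^h$ is an associative conformal product its commutator is a Lie conformal $\lambda$-bracket, and $(P[[h]],\delta_h)$ is a Lie conformal coalgebra exactly as in the proof of Proposition \ref{scl-coA}. Fully antisymmetrizing \eqref{1-thq}, in both the inputs and the two tensor factors, produces the Lie conformal bialgebra cocycle identity for the pair $([\cdot_\lambda\cdot]_h,\delta_h)$. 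Because $[a_\lambda b]_h=h[a_\lambda b]+O(h^2)$ and $\delta_h=h\delta+O(h^2)$, the whole identity is divisible by $h^2$; its $h^2$-coefficient has left-hand side $\delta([a_\lambda b])$ and right-hand side $(\mathfrak{ad}_P(a)_\lambda\otimes I+I\otimes \mathfrak{ad}_P(a)_\lambda)\delta(b)-(\mathfrak{ad}_P(b)_{-\lambda-\partial^{\otimes^2}}\otimes I+I\otimes \mathfrak{ad}_P(b)_{-\lambda-\partial^{\otimes^2}})\delta(a)$, which is exactly the cocycle condition in the definition of a Lie conformal bialgebra. This supplies the one remaining clause.

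The main obstacle is not conceptual but a careful bookkeeping of the conformal spectral variables, in particular the shifts $-\lambda-\partial^{\otimes^2}$ and $-\lambda-\partial^{\otimes^3}$, as they are propagated through the $h$-expansions of $\mathfrak{L}^h$, $\mathfrak{R}^h$ and $\Delta_h$. The crux at each stage is to verify that the symmetric contributions coming from the undeformed commutative product and cocommutative coproduct cancel under the relevant (anti)symmetrization, so that the genuinely first-order data assemble into precisely the shifted operators demanded by \eqref{Poisson-bialgebra-1}, \eqref{Poisson-bialgebra-2} and the Lie cocycle; organizing the computation through the commutator/cocommutator pair $([\cdot_\lambda\cdot]_h,\delta_h)$ keeps this cancellation transparent and isolates \eqref{Poisson-bialgebra-1} and \eqref{Poisson-bialgebra-2} as the only two identities requiring a direct order-$h^1$ extraction.
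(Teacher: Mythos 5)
Your proposal is correct and follows essentially the same route as the paper's proof: the algebra and coalgebra clauses via Propositions \ref{scl-A} and \ref{scl-coA}, the ASI clause as the $n=1$ truncation of the $h$-adic bialgebra, the Lie conformal bialgebra clause by passing to the commutator bracket $[\cdot_\lambda\cdot]_h$ and cocommutator $\delta_h=\Delta_h-\tau\Delta_h$ and reading off the $h^2$-coefficient of the resulting cocycle identity, and Eqs.~(\ref{Poisson-bialgebra-1})--(\ref{Poisson-bialgebra-2}) by extracting order-$h$ coefficients of the (anti)symmetrized forms of Eqs.~(\ref{1-thq})--(\ref{2-thq}), using commutativity ($\mathfrak{R}_P=\mathfrak{L}_P$) and cocommutativity to reorganize exactly as in the paper. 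The only differences are cosmetic: the paper truncates mod $h^3$ where you argue by divisibility in $P[[h]]$, and your phrase ``antisymmetrizing \eqref{1-thq}'' for the Lie cocycle step should be understood (as you do elsewhere) to include the order-$h$ content of \eqref{2-thq}, which the paper uses via Eq.~(\ref{eq:Bi2}).
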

\begin{proof}
Define the $\lambda$-bracket $[\cdot_\lambda \cdot]_h$ on $P[[h]]/h^3P[[h]]$ by
\begin{eqnarray*}
[a_\lambda b]_h:=a\overline{\circ}_\lambda^h b-b\overline{\circ}_{-\lambda-\partial}^h a,\;\;a, b\in P.
\end{eqnarray*}
Since $(P[[h]]/h^3P[[h]], \overline{\circ}_\lambda^h,
\overline{\Delta_h})$ is an ASI conformal bialgebra,  we show that
$(P[[h]]/h^3P[[h]],$ $ [\cdot_\lambda \cdot]_h,
\overline{\delta_h}:=\overline{\Delta_h}-\tau\overline{\Delta_h})$
is a Lie conformal bialgebra. Therefore, we obtain
\begin{eqnarray}
\label{comp-Lie}&&\overline{\delta_h}([a_\lambda b]_h)=({\mathfrak{ad}^h(a)}_\lambda \otimes I+I
\otimes {\mathfrak{ad}^h(a)}_\lambda)\overline{\delta_h}(b)-({\mathfrak{ad}^h(b)}_{-\lambda-\partial^{\otimes^2}}\otimes
I+I\otimes  {\mathfrak{ad}^h(b)}_{-\lambda-\partial^{\otimes^2}})\overline{\delta_h}(a),
\end{eqnarray}
where ${\mathfrak{ad}^h(a)}_\lambda b=[a_\lambda b]_h$ for $a$, $b\in P$.
Considering the $h^2$-terms in Eq. (\ref{comp-Lie}), we obtain
\begin{eqnarray*}
\delta([a_\lambda b])=({\mathfrak{ad}_P(a)}_\lambda\otimes I+I\otimes
{\mathfrak{ad}_P(a)}_\lambda)\delta(b)-({\mathfrak{ad}_P(b)}_{-\lambda-\partial^{\otimes^2}}\otimes
I+I\otimes  {\mathfrak{ad}_P(b)}_{-\lambda-\partial^{\otimes^2}})\delta(a),\;\;a, b\in P.
\end{eqnarray*}
Therefore, $(P, [\cdot_\lambda \cdot], \delta)$ is a Lie conformal bialgebra. By Propositions \ref{scl-A} and \ref{scl-coA}, we only need to prove that Eqs. (\ref{Poisson-bialgebra-1}) and (\ref{Poisson-bialgebra-2}) hold.

For the associative conformal algebra $(P[[h]]/h^3P[[h]], \overline{\circ}_{\lambda}^h)$, set
\begin{eqnarray*}
\mathfrak{L}^h(a)_{\lambda}(b):=a\overline{\circ}_\lambda^h b,\;\; \mathfrak{R}^h(a)_{\lambda}(b):=b\overline{\circ}_{-\lambda-\partial}^h a,\;\;a, b\in P.
\end{eqnarray*}
Let $a,b\in P$. Since $(P[[h]]/h^3P[[h]],
\overline{\circ}_{\lambda}^h, \overline{\Delta_h})$ is an
associative conformal bialgebra, we obtain
\begin{eqnarray}
\label{eq:Bi1}&&\overline{\Delta_h}(a\overline{\circ}_{\lambda}^h b)=(I\otimes \mathfrak{L}^h(a)_{\lambda})\overline{\Delta_h}(b)+(\mathfrak{R}^h(b)_{-\lambda-\partial^{\otimes^2}}\otimes I)\overline{\Delta_h}(a),\\
&&\label{eq:Bi2}(\mathfrak{L}^h(b)_{-\lambda-\partial^{\otimes^2}}\otimes
I-I\otimes
\mathfrak{R}^h(b)_{-\lambda-\partial^{\otimes^2}})\overline{\Delta_h}(a)+(I\otimes
\mathfrak{L}^h(a)_\lambda-\mathfrak{R}^h(a)_\lambda\otimes I)\tau
\overline{\Delta_h}(b)=0.
\end{eqnarray}
By Eq. (\ref{eq:Bi1}), we have
\begin{eqnarray}
\label{eq:Bi3}\overline{\Delta_h}(b\overline{\circ}_{-\lambda-\partial}^h a)=(I\otimes \mathfrak{L}^h(b)_{-\lambda-\partial^{\otimes^2}})\overline{\Delta_h}(a)+(\mathfrak{R}^h(a)_{\lambda}\otimes I)\overline{\Delta_h}(b).
\end{eqnarray}
By Eqs. (\ref{eq:Bi1})-(\ref{eq:Bi3}), we have
\begin{eqnarray*}
&&\overline{\Delta_h}(a\overline{\circ}_{\lambda}^h b-b\overline{\circ}_{-\lambda-\partial}^h a)\\
&=&(I\otimes \mathfrak{L}^h(a)_{\lambda}-\mathfrak{R}^h(a)_{\lambda}\otimes I)
\overline{\Delta_h}(b)+(\mathfrak{R}^h(b)_{-\lambda-\partial^{\otimes^2}}\otimes I-I\otimes \mathfrak{L}^h(b)_{-\lambda-\partial^{\otimes^2}})\overline{\Delta_h}(a)\\
&=&(I\otimes \mathfrak{L}^h(a)_{\lambda}-\mathfrak{R}^h(a)_{\lambda}\otimes I)
\overline{\Delta_h}(b)+(\mathfrak{R}^h(b)_{-\lambda-\partial^{\otimes^2}}\otimes I-I\otimes \mathfrak{L}^h(b)_{-\lambda-\partial^{\otimes^2}})\overline{\Delta_h}(a)\\
&&\quad-(\mathfrak{L}^h(b)_{-\lambda-\partial^{\otimes^2}}\otimes I-I\otimes \mathfrak{R}^h(b)_{-\lambda-\partial^{\otimes^2}})\overline{\Delta_h}(a)-(I\otimes \mathfrak{L}^h(a)_\lambda-\mathfrak{R}^h(a)_\lambda\otimes I)\tau \overline{\Delta_h}(b)\\
&=&(I\otimes \mathfrak{L}^h(a)_{\lambda}-\mathfrak{R}^h(a)_\lambda\otimes I)(\overline{\Delta_h}(b)-\tau\overline{\Delta_h}(b))\\
&&\quad +(\mathfrak{R}^h(b)_{-\lambda-\partial^{\otimes^2}}\otimes I-I\otimes \mathfrak{L}^h(b)_{-\lambda-\partial^{\otimes^2}}-\mathfrak{L}^h(b)_{-\lambda-\partial^{\otimes^2}}\otimes I+I\otimes \mathfrak{R}^h(b)_{-\lambda-\partial^{\otimes^2}})\overline{\Delta_h}(a).
\end{eqnarray*}
Set $\Delta(a)=\sum a_{(1)}\otimes a_{(2)}$ for $a\in P$.
Considering the $h$-terms in the above equation, we get
\begin{eqnarray*}
\Delta([a_\lambda b])&=&(I\otimes \mathfrak{L}_P(a)_\lambda-\mathfrak{R}_P(a)_\lambda\otimes I)\delta(b)\\
&&\quad+(\mathfrak{R}_P(a)_{-\lambda-\partial^{\otimes^2}}\otimes I-I\otimes\mathfrak{ L}_P(b)_{-\lambda-\partial^{\otimes^2}}-\mathfrak{L}_P(b)_{-\lambda-\partial^{\otimes^2}}\otimes I+I\otimes \mathfrak{R}_P(b)_{-\lambda-\partial^{\otimes^2}})\Delta_1(a)\\
&&\quad+\sum(\{{a_{(1)}}_{\lambda+I\otimes \partial} b\}_{\mu_1}\otimes a_{(2)}-a_{(1)}\otimes \{b_{-\lambda-\partial^{\otimes^2}}a_{(2)}\}_{\mu_1}\\
&&\quad-\{b_{-\lambda-\partial^{\otimes^2}}a_{(1)}\}_{\mu_1}\otimes a_{(2)}+a_{(1)}\otimes \{{a_{(2)}}_{\lambda+\partial \otimes I}b\}_{\mu_1}).
\end{eqnarray*}
Since $(P, \cdot_\lambda\cdot)$ is commutative, we have
\begin{eqnarray*}
\Delta([a_\lambda b])=(I\otimes \mathfrak{L}_P(a)_\lambda-\mathfrak{L}_P(a)_\lambda\otimes I)\delta(b)-({\mathfrak{ad}_P(b)}_{-\lambda-\partial^{\otimes^2}}\otimes I+I\otimes{\mathfrak{ ad}_P(b)}_{-\lambda-\partial^{\otimes^2}})\Delta(a).
\end{eqnarray*}
Then Eq. (\ref{Poisson-bialgebra-2}) holds.

By Eq. (\ref{eq:Bi1}), we have
\begin{eqnarray}
\label{eq:Bi4}\tau\overline{\Delta_h}(a\overline{\circ}_{\lambda}^h b)=( \mathfrak{L}^h(a)_{\lambda}\otimes I)\tau\overline{\Delta_h}(b)+(I\otimes \mathfrak{R}^h(b)_{-\lambda-\partial^{\otimes^2}})\tau\overline{\Delta_h}(a).
\end{eqnarray}
Then by Eqs. (\ref{eq:Bi1}), (\ref{eq:Bi2}) and (\ref{eq:Bi4}), we
obtain {\small\begin{eqnarray*}
&&(\overline{\Delta_h}-\tau\overline{\Delta_h})(a\overline{\circ}_{\lambda}^h b)\\
&&\quad=(I\otimes \mathfrak{L}^h(a)_{\lambda})\overline{\Delta_h}(b)+(\mathfrak{R}^h(b)_{-\lambda-\partial^{\otimes^2}}\otimes I)\overline{\Delta_h}(a)-( \mathfrak{L}^h(a)_{\lambda}\otimes I)\tau\overline{\Delta_h}(b)-(I\otimes \mathfrak{R}^h(b)_{-\lambda-\partial^{\otimes^2}})\tau\overline{\Delta_h}(a)\\
&&\quad=(I\otimes \mathfrak{L}^h(a)_{\lambda})\overline{\Delta_h}(b)+(\mathfrak{R}^h(b)_{-\lambda-\partial^{\otimes^2}}\otimes I)\overline{\Delta_h}(a)-( \mathfrak{L}^h(a)_{\lambda}\otimes I)\tau\overline{\Delta_h}(b)-(I\otimes\mathfrak{ R}^h(b)_{-\lambda-\partial^{\otimes^2}})\tau\overline{\Delta_h}(a)\\
&&\qquad-(\mathfrak{L}^h(b)_{-\lambda-\partial^{\otimes^2}}\otimes I-I\otimes \mathfrak{R}^h(b)_{-\lambda-\partial^{\otimes^2}})\overline{\Delta_h}(a)-(I\otimes \mathfrak{L}^h(a)_\lambda-\mathfrak{R}^h(a)_\lambda\otimes I)\tau \overline{\Delta_h}(b)\\
&&\quad=(\mathfrak{R}^h(b)_{-\lambda-\partial^{\otimes^2}}\otimes I-\mathfrak{L}^h(b)_{-\lambda-\partial^{\otimes^2}}\otimes I)\overline{\Delta_h}(a)+(I\otimes \mathfrak{R}^h(b)_{-\lambda-\partial^{\otimes^2}})(\overline{\Delta_h}(a)-\tau\overline{\Delta_h}(a))\\
&&\qquad+(I\otimes \mathfrak{L}^h(a)_{\lambda})(\overline{\Delta_h}(b)-\tau\overline{\Delta_h}(b))-(\mathfrak{L}^h(a)_{\lambda}\otimes I-\mathfrak{R}^h(a)_\lambda\otimes I)\tau\overline{\Delta_h}(b).
\end{eqnarray*}}
Considering the $h$-terms in the above equation, we have
\begin{eqnarray*}
\delta(a_\lambda b)&=&\sum(\{{a_{(1)}}_{\lambda+\partial\otimes I}b\}_{\mu_1}\otimes a_{(2)}-\{b_{-\lambda-\partial^{\otimes^2}}a_{(1)}\}_{\mu_1}\otimes a_{(2)})\\
&&\quad+(\mathfrak{R}_P(b)_{-\lambda-\partial^{\otimes^2}}\otimes I-\mathfrak{L}_P(b)_{-\lambda-\partial^{\otimes^2}}\otimes I)\Delta_1(a)\\
&&\quad+(I\otimes \mathfrak{R}_P(b)_{-\lambda-\partial^{\otimes^2}})\delta(a)+(I\otimes \mathfrak{L}_P(a)_\lambda)\delta(b)\\
&&\quad-\sum (\{a_\lambda b_{(2)}\}_{\mu_1}\otimes b_{(1)}-\{{b_{(2)}}_{-\lambda-\partial}a\}_{\mu_1}\otimes b_{(1)})-(\mathfrak{L}_P(a)_{\lambda}\otimes I-\mathfrak{R}_P(a)_\lambda\otimes I)\tau \Delta_1(b).
\end{eqnarray*}
Since $(P, \cdot_\lambda\cdot)$ is commutative, we obtain {\small
\begin{eqnarray*} \delta(a_\lambda
b)=-(\mathfrak{ad}_P(b)_{-\lambda-\partial^{\otimes^2}}\otimes
I)\Delta(a)+(I\otimes
\mathfrak{L}_P(b)_{-\lambda-\partial^{\otimes^2}})\delta(a)+(I\otimes
\mathfrak{L}_P(a)_\lambda)\delta(b)-(\mathfrak{ad}_P(a)_\lambda\otimes I)\Delta(b).
\end{eqnarray*}}
Then Eq. (\ref{Poisson-bialgebra-1}) follows directly. Therefore
the proof is completed.
\end{proof}

\section{Poisson conformal bialgebras from PGD-bialgebras and pre-PGD-algebras}
We introduce the notion of Poisson-Gel'fand-Dorfman bialgebras and
show that there is a correspondence between them and a class of
Poisson conformal bialgebras. Moreover, a construction of Poisson
conformal bialgebras from pre-Poisson-Gel'fand-Dorfman algebras is
given.

\subsection{A class of Poisson conformal bialgebras corresponding to
PGD-bialgebras}\

Recall \cite{HBG, KUZ} that a {\bf Novikov coalgebra} $(A,
\Delta)$ is a vector space $A$ with a linear map $\Delta:
A\rightarrow A\otimes A$ such that the  following conditions hold.
\begin{eqnarray}
\label{Lc3}(I\otimes \Delta)\Delta(a)-(\tau\otimes I)(I\otimes \Delta)\Delta(a)&=&(\Delta\otimes I)\Delta(a)-(\tau\otimes I)(\Delta\otimes I)\Delta(a),\\
\label{Lc4}(\tau\otimes I)(I\otimes \Delta)\tau
     \Delta(a)&=&(\Delta\otimes I)\Delta(a), \;\;a\in A.
\end{eqnarray}

A {\bf cocommutative associative coalgebra } $(A, \Delta)$ is a
vector space with a linear map $\Delta: A\rightarrow A\otimes A$
such that the  following condition holds.
\begin{eqnarray}
(\Delta\otimes I)\Delta(a)=(I\otimes \Delta)\Delta(a),\;\;\;\Delta(a)=\tau \Delta(a),\;\;a\in A.
\end{eqnarray}

A {\bf Lie coalgebra} $(A, \delta)$ is a vector space with a
linear map $\delta: A\rightarrow A\otimes A$ such that the
following condition holds.
\begin{eqnarray}
\delta(a)=-\tau \delta(a),\;\; (I\otimes \delta)\delta(a)-(\tau\otimes I)(I\otimes \delta)\delta(a)=(\delta\otimes I)\delta(a),\;\; a\in A.
\end{eqnarray}

\begin{defi} \cite{HBG1}\quad
A {\bf Gel'fand-Dorfman coalgebra (GD-coalgebra)} is a triple $(A,
\Delta, \delta)$ where $(A, \Delta)$ is a Novikov coalgebra, $(A,
\delta)$ is a Lie coalgebra and they satisfy the following
condition
\begin{eqnarray}
\label{Lc2}&&(I \otimes \delta)\Delta(a)-(\tau\otimes I)(I\otimes \Delta)\delta(a)
+(\tau\otimes I)(I \otimes \delta)\tau \Delta(a)=(\Delta\otimes I)\delta(a)+(\delta\otimes I)\Delta(a),\;\;a\in A.
\end{eqnarray}
\end{defi}

\begin{defi} \cite{NB}
A {\bf Poisson coalgebra} is a triple $(A, \Delta, \delta)$ where
$(A, \Delta)$ is a cocommutative associative coalgebra, $(A,
\delta)$ is a Lie coalgebra and they satisfy the following
condition
\begin{eqnarray}
\label{Pc}&& (I\otimes \Delta)\delta(a)=(\delta\otimes I)\Delta(a)+(\tau \otimes I)(I\otimes \delta)\Delta(a),\;\; a\in A.
\end{eqnarray}
\end{defi}

Let $\langle \cdot, \cdot \rangle$ be the usual pairing between a
vector space $V$ and the dual vector space $V^*$. Let $V$ and $W$
be two vector spaces. For a linear map $\varphi: V\rightarrow W$,
denote the transpose map by $\varphi^\ast: W^\ast\rightarrow
V^\ast$ given by
\begin{eqnarray*}
\langle \varphi^\ast(f),v\rangle=\langle f, \varphi(v)\rangle,\;\;\; f\in W^\ast, v\in V.
\end{eqnarray*}
Note that $(A, \Delta, \delta)$ is a GD-coalgebra if and only if $(A^\ast, \Delta^\ast, \delta^\ast)$ is a GD-algebra, and $(A, \Delta, \delta)$ is a Poisson coalgebra if and only if $(A^\ast, \Delta^\ast, \delta^\ast)$ is a Poisson algebra.

\begin{defi}
A {\bf differential Novikov-Poisson coalgebra} is a triple $(A,
\Delta_1, \Delta_2)$ such that $(A, \Delta_1)$ is a Novikov
coalgebra, $(A, \Delta_2)$ is a cocommutative associative
coalgebra and they satisfy the following conditions
\begin{eqnarray}
&&\label{DNPC-1}(I\otimes \Delta_2)\Delta_1(a)=(\Delta_1\otimes I)\Delta_2(a)+(\tau\otimes I)(I\otimes \Delta_1)\Delta_2(a),\\
&&\label{DNPC-2}(I\otimes \Delta_2)\tau \Delta_1(a)=(\tau\otimes I)(\Delta_1\otimes I)\Delta_2(a),\;\;a\in A.
\end{eqnarray}
\end{defi}

It is straightforward to show that $(A, \Delta_1, \Delta_2)$ is a
differential Novikov-Poisson coalgebra if and only if $(A^\ast,
\Delta_1^\ast, \Delta_2^\ast)$ is a differential Novikov-Poisson
algebra.

\begin{defi}
A {\bf Poisson-Gel'fand-Dorfman coalgebra (PGD-coalgebra)} is a
quadruple $(A, \Delta_1,$ $ \Delta_2, \delta)$ such that  $(A,
\Delta_1, \delta)$ is a GD-coalgebra, $(A$, $\Delta_2$, $\delta)$
is a Poisson coalgebra and $(A, \Delta_1, \Delta_2)$ is a
differential Novikov-Poisson coalgebra.

\end{defi}
\begin{pro} Let $A$ be a vector space and $P={\bf k}[\partial]\otimes_{\bf k} A$ be a free ${\bf k}[\partial]$-module. Suppose that
 $\Delta_1$, $\Delta_2$ and $\delta_0$ are ${\bf k}[\partial]$-module homomorphisms from $P$ to $P\otimes P$ such that $\Delta_i(A)\subseteq A\otimes A$ for $i=1$, $2$ and $\delta_0(A)\subseteq A\otimes A$ .
Define two ${\bf k}[\partial]$-module homomorphisms  $\delta$ and
$\Delta$ by first taking
\begin{eqnarray}\label{corr-co}
\delta(a):=(\partial \otimes I)\Delta_1(a)-\tau (\partial \otimes I)\Delta_1(a)+\delta_0(a),\;\;\Delta(a):=\Delta_2(a),\;\;a\in A,
\end{eqnarray}
and then extending to $P$. 
Then
$(P,\delta, \Delta)$ is a Poisson conformal coalgebra if and only
if $(A, \Delta_1, \Delta_2,$ $ \delta_0)$ is a PGD-coalgebra.
\end{pro}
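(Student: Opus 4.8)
The plan is to deduce this coalgebra statement from its already-established algebra counterpart, Proposition~\ref{constr-PCA}, by conformal dualization, rather than checking the three defining axioms of a Poisson conformal coalgebra directly. Since $A$ is finite dimensional, $P={\bf k}[\partial]\otimes_{\bf k}A$ is finite free, so $P^{\ast c}$ is again finite free with $(P^{\ast c})^{\ast c}\cong P$. The two dual propositions proved above—one sending a Poisson conformal coalgebra $(P,\delta,\Delta)$ to the Poisson conformal algebra on $P^{\ast c}$, the other sending a (free) Poisson conformal algebra back to a Poisson conformal coalgebra on its dual—combine, via the double dual, into a genuine biconditional: $(P,\delta,\Delta)$ is a Poisson conformal coalgebra if and only if the structure on $P^{\ast c}$ given by $([f_\lambda g])_\mu(a)=(f\otimes g)_{\lambda,\mu-\lambda}(\delta(a))$ and $(f_\lambda g)_\mu(a)=(f\otimes g)_{\lambda,\mu-\lambda}(\Delta(a))$ is a Poisson conformal algebra.

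The central step is then to compute this induced structure explicitly and recognize its shape. Fix a ${\bf k}[\partial]$-basis of $P$ arising from a basis of $A$, so that the dual functionals $f,g\in A^{\ast}\subset P^{\ast c}$ are $\lambda$-independent. Writing $\circ:=\Delta_1^{\ast}$, $\cdot:=\Delta_2^{\ast}$ and $[\cdot,\cdot]:=\delta_0^{\ast}$ for the transpose operations on $A^{\ast}$, substitute $\delta(a)=(\partial\otimes I)\Delta_1(a)-\tau(\partial\otimes I)\Delta_1(a)+\delta_0(a)$ into the bracket formula and use $f_\lambda(\partial b)=\lambda f_\lambda(b)$. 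The first term contributes $\lambda(f\circ g)(a)$, the $\tau$-twisted term contributes $-(\mu-\lambda)(g\circ f)(a)$, and $\delta_0$ contributes $[f,g](a)$; since $(\partial h)_\mu(a)=-\mu\,h(a)$, the $-\mu(g\circ f)$ part is exactly $\partial(g\circ f)$ while the leftover $\lambda(g\circ f)$ merges with $\lambda(f\circ g)$. One obtains
$$[f_\lambda g]=\partial(g\circ f)+\lambda(g\star f)+[f,g],\qquad f_\lambda g=f\cdot g,$$
with $g\star f=g\circ f+f\circ g$ forced automatically. This is precisely the form of the Poisson conformal algebra attached to a PGD-algebra in Proposition~\ref{constr-PCA}, with the symmetrization constraint on $\star$ holding on the nose.

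Proposition~\ref{constr-PCA} now applies verbatim: the induced structure on $P^{\ast c}$ is a Poisson conformal algebra if and only if $(A^{\ast},\Delta_1^{\ast},\Delta_2^{\ast},\delta_0^{\ast})$ is a PGD-algebra. Finally, the transpose correspondences recalled just above—between GD-coalgebras and GD-algebras, between Poisson coalgebras and Poisson algebras, and between differential Novikov-Poisson coalgebras and differential Novikov-Poisson algebras—applied to the three defining conditions of a PGD-algebra, show that $(A^{\ast},\Delta_1^{\ast},\Delta_2^{\ast},\delta_0^{\ast})$ is a PGD-algebra if and only if $(A,\Delta_1,\Delta_2,\delta_0)$ is a PGD-coalgebra. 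Chaining the three biconditionals completes the proof.

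I expect the main obstacle to be the bookkeeping in the central computation: verifying that under the conformal pairing, where $\partial$ on the first tensor leg becomes the scalar $\lambda$ and on the second leg becomes $\mu-\lambda$, the antisymmetrized combination $(\partial\otimes I)\Delta_1-\tau(\partial\otimes I)\Delta_1$ simultaneously reproduces the $\partial(g\circ f)$ term (from the $-\mu$ contribution) and the symmetrized $\lambda$-coefficient $g\circ f+f\circ g$, so that the coefficient data matches Proposition~\ref{constr-PCA} exactly with $\circ=\Delta_1^{\ast}$, $\cdot=\Delta_2^{\ast}$, $[\cdot,\cdot]=\delta_0^{\ast}$. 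A secondary point to pin down is that the two dual propositions really yield a biconditional, i.e.\ that the coalgebra recovered from $P^{\ast c}$ through the double dual is the original pair $(\delta,\Delta)$ and not merely an isomorphic copy; this follows from the finite-freeness of $P$.
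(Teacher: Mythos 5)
Your argument is correct, but it takes a genuinely different route from the paper's proof. The paper works entirely on the coalgebra side: it cites \cite[Proposition 2.8]{HBG1} to identify the Lie conformal coalgebra condition on $\delta$ with the GD-coalgebra axioms for $(A,\Delta_1,\delta_0)$, observes that the cocommutative coassociative part is immediate, and then verifies the remaining mixed axiom $(I\otimes \Delta)\delta=(\delta\otimes I)\Delta+(\tau\otimes I)(I\otimes \delta)\Delta$ by direct expansion on $a\in A$, sorting the resulting identity in $P\otimes P\otimes P$ according to where $\partial$ sits (terms in $\partial A\otimes A\otimes A$, $A\otimes \partial A\otimes A$, $A\otimes A\otimes \partial A$ and $A\otimes A\otimes A$) and reading off exactly Eqs.~(\ref{DNPC-1}), (\ref{DNPC-2}) and (\ref{Pc}). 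You instead transport the whole question through the conformal dual and land on Proposition~\ref{constr-PCA}, so the only computation left is the identification $[f_\lambda g]=\partial(g\circ f)+\lambda(g\circ f+f\circ g)+[f,g]$, $f_\lambda g=f\cdot g$ on $A^\ast\subset P^{\ast c}$, which you carry out correctly; note also that the induced operations on $P^{\ast c}$ make sense for arbitrary ${\bf k}[\partial]$-module maps $\delta,\Delta$ (sesquilinearity follows from $\delta(\partial a)=(\partial\otimes I+I\otimes\partial)\delta(a)$ alone), which is what makes your biconditional well posed before any axiom is assumed. The point you defer at the end does close: writing $\Delta_1(v_k)=\sum d_k^{ij}v_i\otimes v_j$, $\Delta_2(v_k)=\sum t_k^{ij}v_i\otimes v_j$, $\delta_0(v_k)=\sum l_k^{ij}v_i\otimes v_j$, the structure constants of the dual algebra are $R_k^{ij}(\lambda,\partial)=\partial d_k^{ji}+\lambda(d_k^{ij}+d_k^{ji})+l_k^{ij}$ and $T_k^{ij}=t_k^{ij}$, so the coproducts recovered by the second dualization proposition have coefficients $S_k^{ij}(x,y)=R_k^{ij}(x,-x-y)=x\,d_k^{ij}-y\,d_k^{ji}+l_k^{ij}$ and $Q_k^{ij}=t_k^{ij}$; under the canonical identification $(P^{\ast c})^{\ast c}\cong P$ these are literally $(\partial\otimes I)\Delta_1-\tau(\partial\otimes I)\Delta_1+\delta_0$ and $\Delta_2$, not merely isomorphic copies. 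As for trade-offs: your route recycles Proposition~\ref{constr-PCA} together with the three coalgebra--algebra transpose equivalences already recorded in the paper, avoiding any new coefficient chase, but it genuinely uses finite-dimensionality of $A$ (for $(A\otimes A)^\ast\cong A^\ast\otimes A^\ast$ and the nondegeneracy behind the double dual), whereas the paper's self-contained comparison-of-terms computation would survive verbatim for infinite-dimensional $A$, where the duality mechanism breaks down.
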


\begin{proof}
By \cite[Proposition 2.8]{HBG1}, $(P, \delta)$ is a Lie conformal
coalgebra if and only if $(A, \Delta_1, \delta_0)$ is a
GD-coalgebra. It is obvious that $(P, \Delta)$ is a cocommutative
associative conformal coalgebra if and only if $(A, \Delta_2)$ is
a cocommutative associative coalgebra. Therefore, we only need to
consider $(I\otimes \Delta)\delta(a)=(\delta\otimes
I)\Delta(a)+(\tau\otimes I)(I\otimes \delta)\Delta(a)$ for all
$a\in A$. Let $a\in A$. Set $\Delta_1(a)=\sum a_{(1)}\otimes
a_{(2)}$, $\Delta_2(a)=\sum a_{[1]}\otimes a_{[2]}$ and
$\delta_0(a)=\sum a_{\{1\}}\otimes a_{\{2\}}$.   Then we obtain
\begin{eqnarray*}
&&(I\otimes \Delta)\delta(a)-(\delta\otimes I)\Delta(a)-(\tau\otimes I)(I\otimes \delta)\Delta(a)\\
&=&\sum\sum(\partial a_{(1)}\otimes a_{(2)[1]}\otimes a_{(2)[2]}-a_{(2)}\otimes \partial a_{(1)[1]}\otimes a_{(1)[2]}-a_{(2)}\otimes a_{(1)[1]}\otimes \partial a_{(1)[2]}\\
&&-\partial a_{[1](1)}\otimes a_{[1](2)}\otimes a_{[2]}+a_{[1](2)}\otimes \partial a_{[1](1)}\otimes  a_{[2]}\\
&&-\partial a_{[2](1)}\otimes a_{[1]}\otimes  a_{[2](2)}+a_{[2](2)}\otimes a_{[1]}\otimes \partial a_{[2](1)})\\
&&+\sum\sum (a_{\{1\}}\otimes a_{\{2\}[1]}\otimes a_{\{2\}[2]}-a_{[1]\{1\}}\otimes a_{[1]\{2\}}\otimes a_{[2]}-a_{[2]\{1\}}\otimes a_{[1]}\otimes a_{[2]\{2\}}).
\end{eqnarray*}
Comparing the terms in $\partial A\otimes A\otimes A$, $A\otimes
\partial A\otimes A$, $A\otimes A\otimes \partial A$ and $A\otimes
A\otimes A$, we obtain that $(I\otimes
\Delta)\delta(a)=(\delta\otimes I)\Delta(a)+(\tau\otimes
I)(I\otimes \delta)\Delta(a)$ holds if and only if the following
equations hold.
\begin{eqnarray}
&&\label{corr-co-1}\sum\sum a_{(1)}\otimes a_{(2)[1]}\otimes a_{(2)[2]}=\sum\sum (a_{[1](1)}\otimes a_{[1](2)}\otimes a_{[2]}+a_{[2](1)}\otimes a_{[1]}\otimes  a_{[2](2)}),\\
&&\label{corr-co-2}\sum\sum a_{(2)}\otimes a_{(1)[1]}\otimes a_{(1)[2]}=\sum\sum a_{[1](2)}\otimes a_{[1](1)}\otimes  a_{[2]},\\
&&\label{corr-co-3}\sum\sum a_{(2)}\otimes a_{(1)[1]}\otimes a_{(1)[2]}=\sum\sum a_{[2](2)}\otimes a_{[1]}\otimes a_{[2](1)},\\
&&\label{corr-co-4}\sum\sum a_{\{1\}}\otimes a_{\{2\}[1]}\otimes a_{\{2\}[2]}=\sum \sum (a_{[1]\{1\}}\otimes a_{[1]\{2\}}\otimes a_{[2]}+a_{[2]\{1\}}\otimes a_{[1]}\otimes a_{[2]\{2\}}).
\end{eqnarray}
Note that Eq. (\ref{corr-co-1}) is just Eq. (\ref{DNPC-1}),  Eq.
(\ref{corr-co-2}) is just Eq. (\ref{DNPC-2}) and Eq.
(\ref{corr-co-4}) is just Eq. (\ref{Pc}). Since $(A,\Delta_2)$ is
cocommutative, Eq. (\ref{corr-co-2}) holds if and only if
 Eq. (\ref{corr-co-3}) holds. Then the proof is completed.
\end{proof}
Let $(A, \circ, \cdot, [\cdot,\cdot])$ be a PGD-algebra. Set $a\star b:=a\circ b+b\circ a$ for all $a$, $b\in A$.

 Recall \cite{HBG} that a {\bf Novikov bialgebra} is a tripe $(A,\circ,\Delta_1)$ where  $(A,\circ)$ is a Novikov algebra and $(A, \Delta_1)$ is a Novikov coalgebra such that
the following conditions are satisfied
\begin{eqnarray}
            &\label{Lb5}\Delta_1(a\circ b)=(R_{A,\circ} (b)\otimes I)\Delta_1 (a)+(I\otimes L_{A,\star}(a))(\Delta_1(b)+\tau\Delta_1(b)),&\\
            &\label{Lb6}(L_{A,\star}(a)\otimes I)\Delta_1(b)-(I\otimes L_{A,\star}(a))\tau\Delta_1 (b)=(L_{A,\star}(b)\otimes I)\Delta_1(a)-(I\otimes L_{A,\star}(b))\tau\Delta_1(a),&\\
            &\label{Lb7}\quad (I\otimes R_{A,\circ}(a)-R_{A,\circ}(a)\otimes
            I)(\Delta_1(b)+\tau\Delta_1(b))=(I\otimes R_{A,\circ}(b)-R_{A,\circ}(b)\otimes
            I)(\Delta_1(a)+\tau\Delta_1(a)),&
    \end{eqnarray}
for all $a$, $b\in A$.

    Let $(A, \cdot)$ be a commutative associative algebra and $(A, \Delta_2)$ be a cocommutative
associative coalgebra. If $\Delta_2$ satisfies the following
condition
\begin{eqnarray}
&&\mlabel{ASI1}\Delta_2(a\cdot b)=(I\otimes L_{A,\cdot}(a))\Delta_2(b)+(L_{A,\cdot}(b)\otimes I)\Delta_2(a),\;\;a, b\in A,
\end{eqnarray}
then $(A, \cdot, \Delta_2)$ is called a {\bf commutative and
cocommutative ASI bialgebra} \cite{Bai1}.

A {\bf Lie bialgebra} is a triple $(A, [\cdot,\cdot], \delta)$
such that $(A, [\cdot,\cdot])$ is a Lie algebra, $(A, \delta)$ is
a Lie coalgebra and they satisfy the following condition
\begin{eqnarray*}
\delta([a,b])=(\ad_A(a)\otimes I+I\otimes \ad_A(a))\delta(b)-(\ad_A(b)\otimes I+I\otimes \ad_A(b))\delta(a), \;\;a, b\in A.
\end{eqnarray*}

\begin{defi}\cite{HBG1}
Let $(A, \circ, [\cdot,\cdot])$ be a GD-algebra. If there are two linear maps $\delta$, $\Delta: A\rightarrow A\otimes A$ such that $(A, \Delta, \delta)$ is a GD-coalgebra, $(A, \circ, \Delta)$ is a Novikov bialgebra, $(A, [\cdot,\cdot], \delta)$ is a Lie bialgebra, and they satisfy the following compatibility condition
\begin{eqnarray}
\label{Lb4}&&\delta(b\circ a)+\Delta([a,b])=(R_{A,\circ}(a)\otimes I)\delta(b)+(L_{A,\circ}(b)\otimes I)\delta(a)+(I\otimes L_{A,\star}(b))\delta(a)\\
&&\;\;\;\;\;\;\;\;\;+(\ad_A(a)\otimes I+I\otimes \ad_A(a))\Delta(b)-(I\otimes \ad_A(b))(\Delta(a)+\tau\Delta(a)),\;\;a, b\in A,\nonumber
\end{eqnarray}
then $(A, \circ, [\cdot,\cdot], \Delta, \delta)$ is called a {\bf Gel'fand-Dorfman bialgebra (GD-bialgebra)}.
\end{defi}

\begin{defi} \cite{NB}
A {\bf Poisson bialgebra} is a 5-tuple $(A, \cdot, [\cdot,\cdot],
\Delta, \delta)$ such that $(A, \cdot, [\cdot,\cdot])$ is a
Poisson algebra, $(A, \Delta, \delta)$ is a Poisson coalgebra, $(A, \cdot, \Delta)$ is a commutative and cocommutative ASI bialgebra, $(A, [\cdot,\cdot], \delta)$ is a Lie bialgebra and
they satisfy the following compatibility conditions
\begin{eqnarray}
\label{PB1}&&\delta(a\cdot b)=(L_{A,\cdot}(a)\otimes I)\delta(b)+(L_{A,\cdot}(b)\otimes I)\delta(a)+(I\otimes \ad_A(a))\Delta(b)+(I\otimes \ad_A(b))\Delta(a),\\
\label{PB2}&&\Delta([a,b])=(\ad_A(a)\otimes I+I\otimes
\ad_A(a))\Delta(b)-(I\otimes
L_{A,\cdot}(b))\delta(a)+(L_{A,\cdot}(b)\otimes I)\delta(a),
\end{eqnarray}
for all $a$, $b\in A$.
\end{defi}

\begin{defi}
Let $(A, \circ, \cdot)$ be a differential Novikov-Poisson algebra and $(A, \Delta_1, \Delta_2)$ be a differential Novikov-Poisson coalgebra. If $(A, \circ, \Delta_1)$ is a Novikov bialgebra, $(A, \cdot, \Delta_2)$ is a commutative and cocommutative ASI bialgebra and they satisfy the following compatibility conditions
\begin{eqnarray}
\label{DNPB1}\Delta_1(a\cdot b)=(L_{A,\cdot}(a)\otimes I)\Delta_1(b)-(I\otimes L_{A,\star}(b))\Delta_2(a),\\
\label{DNPB2}\Delta_2(a\circ b)=(R_{A,\circ}(b)\otimes I)\Delta_2(a)-(I\otimes L_{A,\cdot}(a))(\Delta_1(b)+\tau \Delta_1(b)),\\
\label{DNPB4}\;\;(L_{A,\circ}(a)\otimes I)\Delta_2(b)+(I\otimes L_{A,\circ}(b))\Delta_2(a)+(I\otimes L_{A,\cdot}(b))\Delta_1(a)+(L_{A,\cdot}(a)\otimes I)\tau \Delta_1(b)=0,
\end{eqnarray}
for all $a$, $b\in A$, then $(A, \circ, \cdot, \Delta_1, \Delta_2)$ is called a {\bf differential Novikov-Poisson bialgebra}.
\end{defi}
\begin{rmk}Since $(A, \cdot)$ is commutative, by Eq. (\ref{DNPB1}),  we obtain
{\small\begin{eqnarray} \label{DNPB3}(L_{A,\cdot}(a)\otimes
I)\Delta_1(b)+(I\otimes
L_{A,\star}(a))\Delta_2(b)=(L_{A,\cdot}(b)\otimes
I)\Delta_1(a)+(I\otimes L_{A,\star}(b))\Delta_2(a),\;\;a,b\in A.
\end{eqnarray}}
\end{rmk}
\begin{defi}
A {\bf Poisson-Gel'fand-Dorfman bialgebra (PGD-bialgebra)} is a
7-tuple $(A, \circ$, $\cdot$, $[\cdot,\cdot], \Delta_1, \Delta_2,
\delta)$ such that $(A, \circ, [\cdot,\cdot], \Delta_1, \delta)$
is a GD-bialgebra, $(A, \cdot, [\cdot,\cdot], \Delta_2, \delta)$
is a Poisson bialgebra and $(A, \circ, \cdot, \Delta_1, \Delta_2)$
is a differential Novikov-Poisson bialgebra.
\end{defi}
Obviously,  GD-bialgebras, Poisson bialgebras and differential
Novikov-Poisson bialgebras are special PGD-bialgebras.
Moreover, there is a correspondence between a class of Poisson
conformal bialgebras and PGD-bialgebras.
\begin{thm}\label{corr-bialg}
Let $(P={\bf k}[\partial]\otimes_{\bf k} A, [\cdot_\lambda \cdot], \cdot_\lambda
\cdot)$ be the Poisson conformal algebra corresponding to a
PGD-algebra $(A,\circ, \cdot, [\cdot,\cdot])$. Suppose that
 $\Delta_1$, $\Delta_2$ and $\delta_0$ are ${\bf k}[\partial]$-module homomorphisms from $P$ to $P\otimes P$ such that $\Delta_i(A)\subseteq A\otimes A$ for $i=1$, $2$ and $\delta_0(A)\subseteq A\otimes A$.
Define two ${\bf k}[\partial]$-module homomorphisms $\delta$ and
$\Delta$ by Eq. {\rm (\ref{corr-co})}.
Then $(P={\bf k}[\partial]\otimes_{\bf k} A, [\cdot_\lambda \cdot], \cdot_\lambda
\cdot, \delta, \Delta)$ is a Poisson conformal bialgebra if and
only if $(A, \circ, \cdot, [\cdot,\cdot], \Delta_1, \Delta_2,
\delta_0)$ is a PGD-bialgebra.
\end{thm}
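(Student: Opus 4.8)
The plan is to exploit the fact that, by definition, a Poisson conformal bialgebra $(P,[\cdot_\lambda\cdot],\cdot_\lambda\cdot,\delta,\Delta)$ is the superposition of four structures---a Poisson conformal algebra, a Poisson conformal coalgebra, a Lie conformal bialgebra $(P,[\cdot_\lambda\cdot],\delta)$, and a commutative and cocommutative ASI conformal bialgebra $(P,\cdot_\lambda\cdot,\Delta)$---together with the two compatibility conditions (\ref{Poisson-bialgebra-1}) and (\ref{Poisson-bialgebra-2}); and that a PGD-bialgebra is likewise the superposition of a GD-bialgebra, a Poisson bialgebra, and a differential Novikov-Poisson bialgebra. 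First I would match the four conformal structures with their classical counterparts using the correspondences already available: the Poisson conformal algebra $(P,[\cdot_\lambda\cdot],\cdot_\lambda\cdot)$ corresponds to the PGD-algebra $(A,\circ,\cdot,[\cdot,\cdot])$ by Proposition~\ref{constr-PCA}; the Poisson conformal coalgebra $(P,\delta,\Delta)$ corresponds to the PGD-coalgebra $(A,\Delta_1,\Delta_2,\delta_0)$ by the preceding proposition; the Lie conformal bialgebra $(P,[\cdot_\lambda\cdot],\delta)$ corresponds to the GD-bialgebra $(A,\circ,[\cdot,\cdot],\Delta_1,\delta_0)$ by the corresponding theorem in \cite{HBG1}; and the commutative and cocommutative ASI conformal bialgebra $(P,\cdot_\lambda\cdot,\Delta)$ corresponds to the commutative and cocommutative ASI bialgebra $(A,\cdot,\Delta_2)$ by the results of \cite{HB1,Bai1}.

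Tracking which classical conditions these four correspondences deliver, one sees that they already account for the entire GD-bialgebra, the entire commutative and cocommutative ASI bialgebra, the Lie bialgebra $(A,[\cdot,\cdot],\delta_0)$, and both coalgebraic halves (the Poisson coalgebra and the differential Novikov-Poisson coalgebra). Hence the only defining relations of a PGD-bialgebra that remain to be produced are the Poisson-bialgebra compatibilities (\ref{PB1}) and (\ref{PB2}) and the differential-Novikov-Poisson-bialgebra compatibilities (\ref{DNPB1}), (\ref{DNPB2}) and (\ref{DNPB4}); note that (\ref{DNPB3}) follows from (\ref{DNPB1}) by the commutativity of $\cdot$, as recorded in the remark. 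The crux of the proof is therefore to show that, in the presence of the four structures above, (\ref{Poisson-bialgebra-1}) and (\ref{Poisson-bialgebra-2}) together are equivalent to these five classical identities.

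To carry this out I would substitute the explicit expressions $[a_\lambda b]=\partial(b\circ a)+\lambda(b\star a)+[a,b]$ and $a_\lambda b=a\cdot b$ from (\ref{wq2})---so that $\mathfrak{ad}_P$ and $\mathfrak{L}_P$ are rewritten through $\circ$, $\star$, $[\cdot,\cdot]$ and $\cdot$---and $\delta,\Delta$ from (\ref{corr-co})---so that the coproducts are rewritten through $\Delta_1$, $\Delta_2$ and $\delta_0$---into both sides of (\ref{Poisson-bialgebra-1}) and (\ref{Poisson-bialgebra-2}). Using the ${\bf k}[\partial]$-linearity of $\Delta_1,\Delta_2,\delta_0$ (which lets one convert the outer $\partial$'s into $\partial^{\otimes^2}$ and reabsorb them), each side becomes a polynomial of degree one in $\lambda$ whose coefficients live in $A\otimes A$ and carry $\partial$ on prescribed tensor slots. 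Comparing first the $\lambda^0$- and $\lambda^1$-coefficients and then, within each, the components in $A\otimes A$, $\partial A\otimes A$, $A\otimes\partial A$ separately---exactly the bookkeeping used in the preceding proposition---decomposes each conformal identity into several classical ones. I expect (\ref{Poisson-bialgebra-1}), whose left side is $\delta(a\cdot b)$ and which splits along the $\Delta_1$- versus $\delta_0$-parts of $\delta$, to yield (\ref{DNPB1}) and (\ref{PB1}), while (\ref{Poisson-bialgebra-2}), whose left side is $\Delta_2([a_\lambda b])$ and which splits along the $\circ/\star$-part versus the $[\cdot,\cdot]$-part of the bracket, to yield (\ref{DNPB2}), (\ref{DNPB4}) and (\ref{PB2}).

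The main obstacle is precisely this coefficient extraction: because the bracket mixes $\partial$, $\lambda$ and the $\partial\otimes I$ hidden inside $\delta$, each operator $\mathfrak{ad}_P(\cdot)_{-\lambda-\partial^{\otimes^2}}$ expands into several monomials, and one must keep careful track of which tensor slot carries $\partial$ in order to isolate the individual classical identities. The commutativity of $\cdot$, the cocommutativity of $\Delta_2$ and the skew-symmetry of $\delta_0$ (hence of $\delta$) are used repeatedly to merge or cancel terms and to verify that the equations produced by the two conformal relations are not independent but collapse exactly onto (\ref{PB1}), (\ref{PB2}), (\ref{DNPB1}), (\ref{DNPB2}) and (\ref{DNPB4}); the resulting $\partial$-grading on the three tensor factors is what cleanly separates the Novikov-Poisson-flavoured identities from the Poisson-flavoured ones. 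The computation is routine but lengthy, and closely parallels the proofs of the corresponding statements in \cite{HBG1} and \cite{HB1} and of the preceding proposition.
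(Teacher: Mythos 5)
Your proposal is correct and takes essentially the same route as the paper: after invoking the known correspondences (Lie conformal bialgebra $\leftrightarrow$ GD-bialgebra from \cite{HBG1}, the ASI conformal bialgebra correspondence, and the coalgebra correspondence), the paper likewise substitutes Eqs.~(\ref{wq2}) and (\ref{corr-co}) into Eqs.~(\ref{Poisson-bialgebra-1}) and (\ref{Poisson-bialgebra-2}) and compares the components in $\partial A\otimes A$, $A\otimes\partial A$, $\lambda A\otimes A$ and $A\otimes A$, using commutativity of $\cdot$ and cocommutativity of $\Delta_2$ to collapse the extracted equations onto (\ref{PB1}), (\ref{PB2}), (\ref{DNPB1}), (\ref{DNPB2}) and (\ref{DNPB4}). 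You even anticipate correctly the one subtlety of the computation, namely that some of the extracted conditions (such as Eq.~(\ref{condd2}) in the paper) are not independent but are consequences of the others.
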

\begin{proof}
By \cite[Theorem 2.15]{HBG1}, $({\bf k}[\partial]\otimes_{\bf k} A, [\cdot_\lambda
\cdot], \delta)$ is a Lie conformal bialgebra if and only if $(A,
\circ, [\cdot,\cdot], \Delta_1, \delta_0)$ is a GD-bialgebra.
Moreover, it is obvious that $({\bf k}[\partial]\otimes_{\bf k} A, \cdot_\lambda
\cdot, \Delta)$ is a commutative and cocommutative ASI conformal
bialgebra if and only if $(A, \cdot, \Delta_2)$ is commutative and
cocommutative ASI bialgebra. Assume that $\Delta_1(a)=\sum
a_{(1)}\otimes a_{(2)}$, $\Delta_2(a)=\sum a_{[1]}\otimes a_{[2]}$
and $\delta_0(a)=\sum a_{\{1\}}\otimes a_{\{2\}}$ for any $a\in
A$. Then for all $a$, $b\in A$, we obtain
\begin{eqnarray*}
 &&\delta(a_\lambda b)-({\mathfrak{L}_P(a)}_\lambda\otimes I)\delta(b)-({\mathfrak{L}_P(b)}_{-\lambda-\partial^{\otimes^2}}\otimes I)\delta(a)-(I\otimes {\mathfrak{ad}_P(a)}_\lambda)\Delta(b)\\
 &&\quad\quad-(I\otimes {\mathfrak{ad}_P(b)}_{-\lambda-\partial^{\otimes^2}})\Delta(a)\\
&=& \sum (\partial (a\cdot b)_{(1)}\otimes (a\cdot b)_{(2)}-(a\cdot b)_{(2)}\otimes \partial (a\cdot b)_{(1)})+\sum (a\cdot b)_{\{1\}}\otimes (a\cdot b)_{\{2\}}\\
&&-\sum ((\partial+\lambda)a\cdot b_{(1)}\otimes b_{(2)}-a\cdot b_{(2)}\otimes \partial b_{(1)})-\sum a\cdot b_{\{1\}}\otimes b_{\{2\}}\\
&&+\sum ((\lambda+I\otimes \partial)b\cdot a_{(1)}\otimes a_{(2)}+b\cdot a_{(2)}\otimes \partial a_{(1)})-\sum b\cdot a_{\{1\}}\otimes a_{\{2\}}\\
&&-\sum b_{[1]}\otimes (\partial (b_{[2]}\circ a)+\lambda(a\star b_{[2]})+[a,b_{[2]}])\\
&&-\sum a_{[1]}\otimes (\partial (a_{[2]}\circ b)+(-\lambda-\partial^{\otimes^2})(a_{[2]}\star b)+[b,a_{[2]}).
\end{eqnarray*}
Therefore, by considering the terms in  $\partial A\otimes A$,
$A\otimes \partial A$, $\lambda A\otimes A$ and $A\otimes A$, Eq.
(\ref{Poisson-bialgebra-1}) holds if and only if the following
equations hold.
\begin{eqnarray}
&&\label{condd1}\sum (a\cdot b)_{(1)}\otimes (a\cdot b)_{(2)}=\sum a\cdot b_{(1)}\otimes b_{(2)}-\sum a_{[1]}\otimes a_{[2]}\star b,\\
&&\label{condd2}\sum (a\cdot b)_{(2)}\otimes (a\cdot b)_{(1)}=\sum a\cdot b_{(2)}\otimes b_{(1)}+\sum (b\cdot a_{(1)}\otimes a_{(2)}+b\cdot a_{(2)}\otimes a_{(1)})\\
&&\qquad\quad-\sum b_{[1]}\otimes b_{[2]}\circ a+\sum a_{[1]}\otimes b\circ a_{[2]},\nonumber\\
&&\label{condd3}\sum a\cdot b_{(1)}\otimes b_{(2)}-\sum b\cdot a_{(1)}\otimes a_{(2)}+\sum b_{[1]}\otimes a\star b_{[2]}-\sum a_{[1]}\otimes a_{[2]}\star b=0,\\
&&\label{Pcondd} \sum (a\cdot b)_{\{1\}}\otimes (a\cdot b)_{\{2\}}=\sum a\cdot b_{\{1\}}\otimes b_{\{2\}} +\sum  b\cdot a_{\{1\}}\otimes a_{\{2\}}\\
&&\quad\quad+\sum b_{[1]}\otimes [a, b_{[2]}]+\sum a_{[1]}\otimes
[b, a_{[2]}],\nonumber
\end{eqnarray}
for all $a$, $b\in A$. Note that Eq. (\ref{condd1}) is just Eq.
(\ref{DNPB1}), Eq. (\ref{condd3}) is just Eq. (\ref{DNPB3}) and
Eq. (\ref{Pcondd}) is just Eq. (\ref{PB1}). Similarly, for all
$a,b\in A$, we get
\begin{eqnarray*}
&&\Delta([a_\lambda b])-({\mathfrak{ad}_P(a)}_\lambda\otimes I+I\otimes {\mathfrak{ad}_P(a)}_\lambda)\Delta(b)+(I\otimes {\mathfrak{L}_P(b)}_{-\lambda-\partial^{\otimes^2}})\delta(a)-({\mathfrak{L}_P(b)}_{-\lambda-\partial^{\otimes^2}}\otimes I)\delta(a)\\
&=& (\partial\otimes I+I\otimes \partial)\Delta_2(b\circ a)+\lambda \Delta_2(a\star b)+\Delta_2([a,b])\\
&&-\sum (\partial(b_{[1]}\circ a)+\lambda(a\star b_{[1]})+[a, b_{[1]}])\otimes b_{[2]}\\
&&-\sum b_{[1]}\otimes (\partial (b_{[2]}\circ a)+\lambda(a\star b_{[2]})+[a,b_{[2]}])\\
&&+\sum (\partial a_{(1)}\otimes b\cdot a_{(2)}-a_{(2)}\otimes (-\lambda-\partial\otimes I)b\cdot a_{(1)})+\sum a_{\{1\}}\otimes b\cdot a_{\{2\}}\\
&&-\sum ((-\lambda-I\otimes \partial) b\cdot a_{(1)}\otimes a_{(2)}-b\cdot a_{(2)}\otimes \partial a_{(1)})-\sum b\cdot a_{\{1\}}\otimes a_{\{2\}}.
\end{eqnarray*}
Considering the terms of $\partial A\otimes A$, $A\otimes \partial
A$, $\lambda A\otimes A$ and $A\otimes A$, we obtain that Eq.
(\ref{Poisson-bialgebra-2}) holds if and only if the following
equations hold.
\begin{eqnarray}
&&\label{condd4}\Delta_2(b\circ a)=\sum b_{[1]}\circ a\otimes b_{[2]}-\sum (a_{(1)}\otimes b\cdot a_{(2)}+a_{(2)}\otimes b\cdot a_{(1)}),\\
&&\label{condd5}\Delta_2(b\circ a)=\sum b_{[1]}\otimes b_{[2]}\circ a-\sum (b\cdot a_{(1)}\otimes a_{(2)}+b\cdot a_{(2)}\otimes a_{(1)}),\\
&&\label{condd6}\Delta_2(a\star b)=\sum (a\star b_{[1]}\otimes b_{[2]}+b_{[1]}\otimes a\star b_{[2]})-\sum (a_{(2)}\otimes b\cdot a_{(1)}+b\cdot a_{(1)}\otimes a_{(2)}),\\
&&\label{Pcondd1} \Delta_2([a,b])=\sum ([a, b_{[1]}]\otimes b_{[2]}+b_{[1]}\otimes [a, b_{[2]}])-\sum (a_{\{1\}}\otimes b\cdot a_{\{2\}}-b\cdot a_{\{1\}}\otimes a_{\{2\}}),
\end{eqnarray}
for all $a$, $b\in A$. Note that Eq. (\ref{Pcondd1}) is just Eq.
(\ref{PB2}), and Eq. (\ref{condd4}) holds if and only if Eq.
(\ref{DNPB2}) holds. Since $\Delta_2=\tau \Delta_2$, Eq.
(\ref{condd4}) holds if and only if (\ref{condd5}) holds. By Eq.
(\ref{DNPB3}), we get {\small \begin{eqnarray*}
&&\Delta_2(a\star b)-\sum (a\star b_{[1]}\otimes b_{[2]}+b_{[1]}\otimes a\star b_{[2]})+\sum (a_{(2)}\otimes b\cdot a_{(1)}+b\cdot a_{(1)}\otimes a_{(2)})\\
&&=\Delta_2(a\circ b)+\Delta_2(b\circ a)-\sum (a\star b_{[1]}\otimes b_{[2]}+b_{[1]}\otimes a\star b_{[2]})+\sum (a_{(2)}\otimes b\cdot a_{(1)}+b\cdot a_{(1)}\otimes a_{(2)})\\
&&=\sum b_{[1]}\circ a\otimes b_{[2]}-\sum (a_{(1)}\otimes b\cdot a_{(2)}+a_{(2)}\otimes b\cdot a_{(1)})+\sum a_{[2]}\otimes a_{[1]}\circ b\\
&&\quad -\sum (a\cdot b_{(2)}\otimes b_{(1)}+a\cdot b_{(1)}\otimes b_{(2)})\\
&&\quad -\sum (a\star b_{[1]}\otimes b_{[2]}+b_{[1]}\otimes a\star b_{[2]})+\sum (a_{(2)}\otimes b\cdot a_{(1)}+b\cdot a_{(1)}\otimes a_{(2)})\\
&&=-\sum a_{(1)}\otimes b\cdot a_{(2)}+\sum a_{[2]}\otimes a_{[1]}\circ b-\sum (a\cdot b_{(2)}\otimes b_{(1)}+a\cdot b_{(1)}\otimes b_{(2)})\\
&&\quad -\sum a\circ b_{[1]}\otimes b_{[2]}-(\sum b_{[1]}\otimes a \star b_{[2]}-\sum b\cdot a_{(1)}\otimes a_{(2)})\\
&&=-\sum a_{(1)}\otimes b\cdot a_{(2)}+\sum a_{[2]}\otimes a_{[1]}\circ b-\sum (a\cdot b_{(2)}\otimes b_{(1)}+a\cdot b_{(1)}\otimes b_{(2)})\\
&&\quad -\sum a\circ b_{[1]}\otimes b_{[2]}-(\sum a_{[1]}\otimes b \star a_{[2]}-\sum a\cdot b_{(1)}\otimes b_{(2)})\\
&&=-((L_{A,\circ}(a)\otimes I)\Delta_2(b)+(I\otimes
L_{A,\circ}(b))\Delta_2(a)+(I\otimes
L_{A,\cdot}(b))\Delta_1(a)+(L_{A,\cdot}(a)\otimes I)\tau
\Delta_1(b)),
\end{eqnarray*}}
for all $a,b\in A$. Therefore, by Eqs. (\ref{DNPB2}) and
(\ref{DNPB3}), Eq. (\ref{condd6}) holds if and only if Eq.
(\ref{DNPB4}) holds. By Eqs. (\ref{DNPB1}), (\ref{DNPB2}) and
(\ref{DNPB4}), we get {\small \begin{eqnarray*}
&&\sum (a\cdot b)_{(2)}\otimes (a\cdot b)_{(1)}-\sum a\cdot b_{(2)}\otimes b_{(1)}-\sum (b\cdot a_{(1)}\otimes a_{(2)}+b\cdot a_{(2)}\otimes a_{(1)})\\
&&\qquad+\sum b_{[1]}\otimes b_{[2]}\circ a-\sum a_{[1]}\otimes b\circ a_{[2]}\\
&=&\sum b_{(2)}\otimes a\cdot b_{(1)}-\sum a_{[2]}\star b\otimes a_{[1]}-\sum a\cdot b_{(2)}\otimes b_{(1)}\\
&&\quad-\sum (b\cdot a_{(1)}\otimes a_{(2)}+b\cdot a_{(2)}\otimes a_{(1)})+\sum b_{[1]}\otimes b_{[2]}\circ a-\sum a_{[1]}\otimes b\circ a_{[2]}\\
&=& \sum b_{(2)}\otimes a\cdot b_{(1)}-\sum a_{[2]}\star b\otimes a_{[1]}+\sum a\circ b_{[1]}\otimes b_{[2]}-\sum a_{(2)}\otimes b\cdot a_{(1)}+\sum b_{[2]}\circ a\otimes b_{[1]}\\
&=&\sum b_{(2)}\otimes a\cdot b_{(1)}-\sum a_{[2]}\star b\otimes a_{[1]}+\sum a \star b_{[2]}\otimes b_{[1]}-\sum a_{(2)}\otimes b\cdot a_{(1)}\\
&=& 0,
\end{eqnarray*}}
for all $a,b\in A$. Therefore, Eqs. (\ref{DNPB1}), (\ref{DNPB2})
and (\ref{DNPB4}) mean that Eq. (\ref{condd2}) holds. Then the
proof is completed.
\end{proof}

By Theorem \ref{corr-bialg}, we obtain the following conclusion
directly.
\begin{cor}\label{constr-PC}
Let $(P={\bf k}[\partial]\otimes_{\bf k} A, [\cdot_\lambda \cdot], \cdot_\lambda
\cdot)$ be the Poisson conformal algebra corresponding to a
differential Novikov-Poisson algebra $(A,\circ, \cdot)$. Suppose that
 $\Delta_1$ and $\Delta_2$ are ${\bf k}[\partial]$-module homomorphisms from $P$ to $P\otimes P$ such that $\Delta_i(A)\subseteq A\otimes A$ for $i=1$, $2$.
Define two ${\bf k}[\partial]$-module homomorphisms $\delta$ and
$\Delta$ first by taking
\begin{eqnarray}\label{cobracket1}
\delta(a):=(\partial \otimes I)\Delta_1(a)-\tau (\partial \otimes I)\Delta_1(a),\;\;\Delta(a):=\Delta_2(a),\;\;a\in A,
\end{eqnarray}
and then extending to $P$. Then $(P={\bf k}[\partial]\otimes_{\bf k} A, [\cdot_\lambda \cdot], \cdot_\lambda \cdot, \delta, \Delta)$ is a Poisson conformal algebra if and only if
$(A, \circ, \cdot, \Delta_1, \Delta_2)$ is a differential Novikov-Poisson bialgebra.
\end{cor}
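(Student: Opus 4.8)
The plan is to deduce this corollary directly from Theorem \ref{corr-bialg} by specializing to the case in which both the Lie bracket on $A$ and the auxiliary cobracket $\delta_0$ are trivial. The first observation is that a differential Novikov-Poisson algebra $(A,\circ,\cdot)$ is, by the remark following the definition of PGD-algebras, exactly a PGD-algebra $(A,\circ,\cdot,[\cdot,\cdot])$ whose bracket $[\cdot,\cdot]$ vanishes; under this identification the Poisson conformal algebra of the present corollary agrees with the one attached to this PGD-algebra in Theorem \ref{corr-bialg}. Moreover, the homomorphism $\delta$ defined by Eq.~\eqref{cobracket1} is precisely the one in Eq.~\eqref{corr-co} with $\delta_0$ taken to be the zero map. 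Thus I would apply Theorem \ref{corr-bialg} with $[\cdot,\cdot]=0$ and $\delta_0=0$, obtaining that $(P,[\cdot_\lambda\cdot],\cdot_\lambda\cdot,\delta,\Delta)$ is a Poisson conformal bialgebra if and only if $(A,\circ,\cdot,0,\Delta_1,\Delta_2,0)$ is a PGD-bialgebra.

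It then remains to check that, under $[\cdot,\cdot]=0$ and $\delta_0=0$, the notion of a PGD-bialgebra collapses to that of a differential Novikov-Poisson bialgebra, and here I would unwind the three constituents of a PGD-bialgebra in turn. The GD-bialgebra $(A,\circ,[\cdot,\cdot],\Delta_1,\delta_0)$ degenerates: its Lie bialgebra part is trivial, its GD-coalgebra reduces to the Novikov coalgebra $(A,\Delta_1)$, and the compatibility condition \eqref{Lb4} holds automatically, since every summand on both sides carries a factor of $\delta_0$, $\ad_A$, or $[\cdot,\cdot]$. Hence this piece contributes exactly the assertion that $(A,\circ,\Delta_1)$ is a Novikov bialgebra. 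Likewise, the Poisson bialgebra $(A,\cdot,[\cdot,\cdot],\Delta_2,\delta_0)$ degenerates to the statement that $(A,\cdot,\Delta_2)$ is a commutative and cocommutative ASI bialgebra: the Poisson-algebra and Poisson-coalgebra constraints reduce to the commutative associative algebra and cocommutative associative coalgebra conditions, the Lie bialgebra part is trivial, and the compatibilities \eqref{PB1} and \eqref{PB2} again reduce to $0=0$. The third constituent, the differential Novikov-Poisson bialgebra $(A,\circ,\cdot,\Delta_1,\Delta_2)$, is retained verbatim.

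Finally, I would invoke the definition of a differential Novikov-Poisson bialgebra itself, which consists of precisely a Novikov bialgebra $(A,\circ,\Delta_1)$, a commutative and cocommutative ASI bialgebra $(A,\cdot,\Delta_2)$, and the compatibility conditions \eqref{DNPB1}, \eqref{DNPB2} and \eqref{DNPB4}. Comparing this with the degenerate PGD-bialgebra data assembled above shows that the two notions coincide, which completes the argument. The only point demanding genuine care---and hence the main (though modest) obstacle---is the verification that the three mixed compatibility conditions \eqref{Lb4}, \eqref{PB1} and \eqref{PB2} vanish identically when the bracket and cobracket are trivial; since each is a sum whose every summand contains one of $\delta_0$, $\ad_A$, or $[\cdot,\cdot]$, this is immediate once written out, but it is the step that guarantees no spurious constraints survive.
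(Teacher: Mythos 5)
Your proposal is correct and follows essentially the same route as the paper, whose entire proof is the one-line observation that the corollary follows directly from Theorem \ref{corr-bialg} by the specialization $[\cdot,\cdot]=0$, $\delta_0=0$. Your additional verification that every term of the mixed compatibilities (\ref{Lb4}), (\ref{PB1}) and (\ref{PB2}) carries a factor of the bracket or cobracket, so that a PGD-bialgebra with trivial $[\cdot,\cdot]$ and $\delta_0$ is exactly a differential Novikov-Poisson bialgebra, is accurate and simply makes explicit what the paper leaves implicit.
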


\delete{\begin{defi}
\cite{LLB}
Let $(A, \cdot, \Delta)$ be a commutative and cocommutative
ASI bialgebra, $D: A\rightarrow A$ be a
derivation on $(A, \cdot)$ and $Q:A\rightarrow A$ be a
coderivation on $(A, \Delta)$, i.e. $\Delta(Q(a))=(I\otimes Q+Q\otimes I)\Delta(a)$ for all $a\in A$. If $D$ and $Q$ satisfy
\begin{eqnarray}
&& \label{DASI1} Q(a\cdot b)=Q(a)\cdot b-a\cdot D(b),\\
&& \label{DASI2}\Delta(D(a))=(D\otimes I-I\otimes Q)\Delta(a),\;\;\; a, b\in A,
\end{eqnarray}
then $(A,\cdot, \Delta, D, Q)$ is called a commutative and cocommutative {\bf
differential antisymmetric infinitesimal bialgebra} or simply a commutative and cocommutative  {\bf differential ASI bialgebra}.
\end{defi}

Finally, we present a construction of differential Novikov-Poisson bialgebras from commutative and cocommutative differential ASI algebras.
\begin{pro}\label{constr-DNPB}
Let $(A,\cdot, \Delta, D, Q)$ be a commutative and cocommutative differential ASI bialgebra. Define a binary operation $\circ_q$ on $A$ and a linear map $\Delta_q: A\rightarrow A\otimes A$ as follows.
\begin{eqnarray}
a\circ_q b:=a\cdot (D+qQ)b,\;\; \Delta_q(a):=(I\otimes (Q+qD))D(a),\;\;a, b\in A,
\end{eqnarray}
where $q\in {\bf k}$. 
If
  \begin{eqnarray}
  a\cdot Q(b)=-a\cdot D(b),\;\; (I\otimes Q)\Delta=-(I\otimes D)\Delta,\;\;a, b\in A,
  \end{eqnarray}
  then $(A, \circ_q, \cdot, \Delta_q, \Delta)$ is a differential Novikov-Poisson bialgebra for any $q\in {\bf k}$. In particular, if $Q=-D$, then Eqs. (\ref{DASI1}) and (\ref{DASI2}) hold and hence $(A, \circ_q, \cdot, \Delta_q, \Delta)$ is a differential Novikov-Poisson bialgebra.
\end{pro}
\begin{proof}
For all $a$, $b\in A$, we have
\begin{eqnarray*}
&&\Delta_q(a\cdot b)-(L_{A, \cdot}(a)\otimes I)\Delta_q(b)+(I\otimes L_{A,\star_q}(b))\Delta(a)\\
&=&(I\otimes (Q+qD))\Delta(a\cdot b)-(L_{A,\cdot}(a)\otimes I)(I\otimes (Q+qD))\Delta(b)\\
&&\quad+\sum a_{[1]}\otimes (b\cdot (D+qQ)(a_{[2]})+(D+qQ)(b)\cdot a_{[2]})\\
&=&  (I\otimes (Q+qD))(\sum a_{[1]}\otimes b\cdot a_{[2]}+\sum a\cdot b_{[1]}\otimes b_{[2]})\\
&&\quad -\sum (a\cdot b_{[1]}\otimes (Q+qD)(b_{[2]}))+\sum a_{[1]}\otimes (b\cdot (D+qQ)(a_{[2]})+(D+qQ)(b)\cdot a_{[2]})\\
&=& \sum a_{[1]}\otimes (Q+qD)(b\cdot a_{[2]})+\sum a_{[1]}\otimes (b\cdot (D+qQ)(a_{[2]})+(D+qQ)(b)\cdot a_{[2]})\\
&=& \sum a_{[1]}\otimes (b\cdot (D+Q)(a_{[2]})+qb\cdot (D+Q)(a_{[2]})+q(D+Q)(b)\cdot a_{[2]})\\
&=& 0.
\end{eqnarray*}
Similarly, one can check that Eqs. (\ref{DNPC-1}), (\ref{DNPC-2}), (\ref{DNPB2})-(\ref{DNPB4}) hold. Then the conclusion follows.
\end{proof}
\yy{If my calculations are correct, this proposition does not hold when $q=-\frac{1}{2}$ without no extra conditions. When we consider Eq. (\ref{DNPB4}), we obtain
\begin{eqnarray*}
&&(L_{A,\circ_q}(a)\otimes I)\Delta(b)+(I\otimes L_{A, \circ_q}(b))\Delta(a)+(I\otimes L_{A,\cdot}(b))\Delta_q(a)+(L_{A,\cdot}(a)\otimes I)\tau \Delta_q(b)\\
&=&\frac{1}{2}(\sum a\cdot (D+Q)b_{[1]}\otimes b_{[2]}+\sum a_{[1]}\otimes b\cdot (D+Q)a_{[2]})\\
&=&\frac{1}{2}\Delta((D+Q)(b)\cdot a).
\end{eqnarray*}}}

At the end of this subsection,  we give an example of Poisson
conformal bialgebras constructed from differential Novikov-Poisson
bialgebras.
\begin{ex}\mlabel{conex1}
Let $({\bf k}[x],\cdot)$ be the polynomial algebra. Define a
linear map $\Delta_2: {\bf k}[x]\rightarrow {\bf k}[x]\otimes {\bf
k}[x]$ as follows.
\begin{eqnarray*}
\Delta_2(1)=0,\;\;\Delta_2(x^n)=x^{n-1}\otimes 1+x^{n-2}\otimes x+\cdots+x\otimes x^{n-2}+1\otimes x^{n-1}, \;\;\;\; n\geq 1.
\end{eqnarray*}
Let $q\in {\bf k}$. By \cite[Example 2.27]{HBG2},  $({\bf k}[x], \cdot, \Delta_2)$ is a commutative and cocommutative ASI bialgebra and $({\bf k}[x], \circ, \Delta_1)$ is a Novikov bialgebra given by
\begin{eqnarray*}
&&x^m\circ x^n=(1-q)nx^{m+n-1},\;\;\Delta_1(1)=0, \\
&&\Delta_1(x)=0,\;\;\Delta_1(x^n)=(q-1)\sum_{i=1}^{n-1}ix^{n-1-i}\otimes x^{i-1}, \;\;\; n\geq 2.
\end{eqnarray*}
It is straightforward to show that $({\bf k}[x], \circ, \cdot,
\Delta_1, \Delta_2)$ is a differential Novikov-Poisson bialgebra
for any $q\in {\bf k}$. Then by Corollary \ref{constr-PC}, there
is a Poisson conformal bialgebra $({\bf k}[\partial]\otimes_{\bf k} {\bf k}[x],
[\cdot_\lambda \cdot]$, $\cdot_\lambda \cdot$, $\delta$, $\Delta)$
given by
\begin{eqnarray*}
&&[{x^m}_\lambda x^n]=(m\partial+(m+n)\lambda)x^{m+n-1},\;\;{x^m}_\lambda x^n=x^{m+n},\\
&&\delta (1)=\delta(x)=0,\;\;\delta(x^n)=-\sum_{i=1}^{n-1} i(\partial x^{n-1-i}\otimes x^{i-1}-x^{i-1}\otimes \partial x^{n-1-i}), \;\; n\geq 2,\\
&&\Delta(1)=0,\;\;\Delta(x^n)=x^{n-1}\otimes 1+x^{n-2}\otimes
x+\cdots+x\otimes x^{n-2}+1\otimes x^{n-1}, \;\;\;\; n\geq 1,
\end{eqnarray*}
 for all $m$, $n\in \mathbb{Z}$.
\end{ex}
\delete{
\subsection{A class of PGD-bialgebras and coboundary Poisson conformal bialgebras}
Let $(A,  \bullet)$ be a vector space with a binary operation $\bullet$ and $r=\sum_i x_i\otimes y_i\in A\otimes A$.
Define
\begin{eqnarray*}
&&r_{13}\bullet r_{23}=\sum_{i, j}x_i\otimes x_j\otimes y_i \bullet y_j,
~~r_{12}\bullet r_{23}=\sum_{i, j} x_i \otimes y_i\bullet x_j\otimes y_j,\\
&&r_{13}\bullet r_{12}=\sum_{i, j}x_i\bullet x_j \otimes y_j \otimes y_i.
\end{eqnarray*}

\begin{lem}\label{dnp-co1}
Let $(A, \circ, \cdot)$ be a differential Novikov-Poisson algebra and $r\in A\otimes A$. Let $\Delta_{1,r}$ and $\Delta_{2,r}: A\rightarrow A\otimes A$ be the linear maps defined by
\begin{eqnarray}
\label{cobnov}&&\Delta_{1,r}(a):=-(L_{A,\circ}(a)\otimes \id+\id\otimes L_{A,\star}(a))r,\\
\mlabel{cobass}
&&\Delta_{2,r}(a)=(I\otimes L_{A,\cdot}(a)-L_{A,\cdot}(a)\otimes I)
r,\;\;a\in A.
\end{eqnarray}
Then Eqs. (\ref{DNPB1}) naturally holds and Eqs. (\ref{DNPB2}) and (\ref{DNPB4}) hold if and only if
\begin{eqnarray}
&&\label{coeq1}-(I\otimes L_{A,\cdot}(a)L_{A,\circ}(b))(r+\tau r)+(L_{A,\star}(b)\otimes L_{A,\cdot}(a))(r+\tau r))=0,\\
&&\label{coeq2}(I\otimes L_{A,\cdot}(a)L_{A,\star}(b))(r+\tau r)+(L_{A,\cdot}(a)\otimes L_{A,\circ}(b))(r+\tau r)=0,\;\;a, b\in A.
\end{eqnarray}
\end{lem}
\begin{proof}
Let $a$, $b\in A$. We get
\begin{eqnarray*}
&&\Delta_{1,r}(a\cdot b)-(L_{A,\cdot}(a)\otimes I)\Delta_{1,r}(b)+(I\otimes L_{A,\star}(b))\Delta_{2,r}(a)\\
&=& -\sum_i((a\cdot b)\circ x_i\otimes y_i+x_i\otimes (a\cdot b)\star y_i-a\cdot (b\circ x_i)\otimes y_i-a\cdot x_i\otimes b\star y_i\\
&&\qquad-x_i\otimes b\star (a\cdot y_i)+a\cdot x_i\otimes b\star y_i)\\
&=& -\sum x_i\otimes ((a\cdot b)\star y_i-b\star (a\cdot y_i)).
\end{eqnarray*}
Notice that
\begin{eqnarray*}
b \star (a\cdot y_i)-(a\cdot b)\star y_i&=&b\circ (a\cdot y_i)+(a\cdot y_i)\circ b-(a\cdot b)\circ y_i-y_i\circ (a\cdot b)\\
&=&b\circ(a\cdot y_i)+(y_i \circ b)\cdot a-(b\circ y_i)\cdot a+b\circ (a\cdot y_i)\\
&=& 0.
\end{eqnarray*}
Therefore, Eq. (\ref{DNPB1}) naturally holds. Similarly, we obtain
\begin{eqnarray*}
&&\Delta_{2,r}(a\circ b)-(R_{A,\circ}(b)\otimes I)\Delta_{2,r}(a)+(I\otimes L_{A,\cdot}(a))(\Delta_{1,r}(b)+\tau \Delta_{1,r}(b))\\
&=&\sum_i(x_i\otimes (a\circ b)\cdot y_i-(a\circ b)\cdot x_i\otimes y_i-x_i\circ b\otimes a\cdot y_i+(a\cdot x_i)\circ b\otimes y_i\\
&&\qquad-b\circ x_i\otimes a\cdot y_i-x_i\otimes a\cdot (b\star y_i)-y_i\otimes a\cdot (b\circ x_i)-b\star y_i\otimes a\cdot x_i)\\
&=&\sum_i(x_i\otimes ((a\circ b)\cdot y_i-a\cdot (b\star y_i))+((a\cdot x_i)\circ b-(a\circ b)\cdot x_i)\otimes y_i\\
&&\qquad-y_i\otimes a\cdot (b\circ x_i)+(L_{A,\star}(b)\otimes L_{A,\cdot}(a))(r+\tau r))\\
&=&\sum_i(x_i\otimes ((a\circ b)\cdot y_i-a\cdot (b\star y_i)+a\cdot (b\circ y_i)-(I\otimes L_{A,\cdot}(a)L_{A,\circ}(b))(r+\tau r)\\
&&\qquad\quad+(L_{A,\star}(b)\otimes L_{A,\cdot}(a))(r+\tau r))\\
&=&-(I\otimes L_{A,\cdot}(a)L_{A,\circ}(b))(r+\tau r)+(L_{A,\star}(b)\otimes L_{A,\cdot}(a))(r+\tau r)).
\end{eqnarray*}
Therefore, Eq. (\ref{DNPB2}) is equivalent to Eq. (\ref{coeq1}). One can similarly check that Eq. (\ref{DNPB4}) is equivalent to Eq. (\ref{coeq2}).
\end{proof}

\begin{lem}\label{dnp-co2}
Let $(A, \circ, \cdot)$ be a differential Novikov-Poisson algebra and $r\in A\otimes A$. Let $\Delta_{1,r}$ and $\Delta_{2,r}: A\rightarrow A\otimes A$ be the linear maps defined by Eqs. (\ref{cobnov}) and (\ref{cobass}). Then Eqs. (\ref{DNPC-1}) and (\ref{DNPC-2}) hold  if and only if for all $a\in A$, the following equalities hold
\begin{eqnarray}
&&\label{cocdnp1} -(L_{A,\circ}(a)\otimes I\otimes I)(r_{13}\cdot r_{23}-r_{12}\cdot r_{23}+r_{13}\cdot r_{12})\\
&&\quad+\sum_j (L_{A,\circ}(x_j)L_{A,\cdot}(a)\otimes I\otimes I)((r+\tau r)\otimes y_j)\nonumber\\
&&\quad +(I\otimes L_{A,\cdot}(a)\otimes I)(r_{12}\circ r_{23}+(\tau r)_{12}\circ r_{13}+r_{23}\star r_{13})\nonumber\\
&&\quad +(I\otimes I\otimes L_{A,\cdot}(a))(r_{13}\circ r_{23}+r_{13}\circ r_{12}+r_{23}\star r_{12})=0,\nonumber\\
&&\label{cocdnp2}-(L_{A,\star}(a)\otimes I\otimes I)((\tau r)_{13}\cdot r_{23}-(\tau r)_{12}\cdot r_{23}-r_{13}\cdot (\tau r)_{12})\\
&&\quad+(I\otimes I\otimes L_{A,\cdot}(a))(-r_{23}\circ (\tau r)_{13}+r_{23}\circ (\tau r)_{12}+r_{13}\star (\tau r)_{12})=0.\nonumber
\end{eqnarray}
\end{lem}
\begin{proof}
Let $a\in A$. We have
\begin{eqnarray*}
&& (I\otimes \Delta_{2,r})\Delta_{1,r}(a)-(\Delta_{1,r}\otimes I)\Delta_{2,r}(a)-(\tau\otimes I)(I\otimes \Delta_{1,r})\Delta_{2,r}(a)\\
&=&\sum_{i,j}(-a\circ x_i\otimes x_j\otimes y_i\cdot y_j+a\circ x_i\otimes y_i\cdot x_j\otimes y_j-x_i\otimes x_j\otimes (a\star y_i)\cdot y_j\\
&&\quad +x_i\otimes (a\star y_i)\cdot x_j\otimes y_j+x_i\circ x_j\otimes y_j\otimes a\cdot y_i+x_j\otimes x_i\star y_j\otimes a\cdot y_i\\
&&\quad -(a\cdot x_i)\circ x_j\otimes y_j\otimes y_i-x_j\otimes (a\cdot x_i)\star y_j\otimes y_i+(a\cdot y_i)\circ x_j\otimes x_i\otimes y_j\\
&&\quad+x_j\otimes x_i\otimes (a\cdot y_i)\star y_j-y_i\circ x_j\otimes a\cdot x_i\otimes y_j-x_j\otimes a\cdot x_i\otimes y_i\star y_j)\\
&=&\sum_{i,j}(-a\circ x_i\otimes x_j\otimes y_i\cdot y_j+a\circ x_i\otimes y_i\cdot x_j\otimes y_j-(a\cdot x_i)\circ x_j\otimes y_j\otimes y_i\\
&&\quad+(a\cdot y_i)\circ x_j\otimes x_i\otimes y_j)+\sum_{i,j}(x_i\otimes (a\star y_i)\cdot x_j\otimes y_j-x_j\otimes (a\cdot x_i)\star y_j\otimes y_i\\
&&\quad-y_i\circ x_j\otimes a\cdot x_i\otimes y_j-x_j\otimes a\cdot x_i\otimes y_i\star y_j)+\sum_{i,j}(-x_i\otimes x_j\otimes (a\star y_i)\cdot y_j\\
&&\quad+x_i\circ x_j\otimes y_j\otimes a\cdot y_i+x_j\otimes x_i\star y_j\otimes a\cdot y_i+x_j\otimes x_i\otimes (a\cdot y_i)\star y_j)\\
&=&-(L_{A,\circ}(a)\otimes I\otimes I)(r_{13}\cdot r_{23}-r_{12}\cdot r_{23}+r_{13}\cdot r_{12})\\
&&\quad+\sum_j (L_{A,\circ}(x_j)L_{A,\cdot}(a)\otimes I\otimes I)((r+\tau r)\otimes y_j)\\
&&\quad +(I\otimes L_{A,\cdot}(a)\otimes I)(r_{12}\circ r_{23}+(\tau r)_{12}\circ r_{13}+r_{23}\star r_{13})\\
&&\quad +(I\otimes I\otimes L_{A,\cdot}(a))(r_{13}\circ r_{23}+r_{13}\circ r_{12}+r_{23}\star r_{12}).
\end{eqnarray*}
Therefore,  Eq. (\ref{DNPC-1}) holds if and only if Eq. (\ref{cocdnp1}) holds. Similarly, one can check that Eq. (\ref{DNPC-2}) holds if and only if Eq. (\ref{cocdnp2}) holds.
\end{proof}

Let $(A, \circ, \cdot, [\cdot,\cdot])$ be a PGD-algebra. Then the
equation
\begin{eqnarray}
{\bf N}(r):= r_{13}\circ r_{23} +r_{12}\star r_{23}+r_{13}\circ
r_{12}=0
\end{eqnarray}
is called the {\bf Novikov Yang-Baxter equation (NYBE)} in $(A, \circ)$ (see \cite{HBG}), the following equation
\begin{equation}\mlabel{eq:AYBE}
{\bf A}(r):=r_{13}\cdot r_{12}+r_{13}\cdot r_{23}-r_{12}\cdot r_{23}=0
\end{equation}
is called {\bf associative Yang-Baxter equation (AYBE) in $(A, \cdot)$} (see \cite{Bai1}) and the following equation
\begin{eqnarray}
{\bf C}(r):=  -[r_{13}, r_{12}] +[r_{12}, r_{23}]+[r_{13},r_{23}]=0,
\end{eqnarray}
is called the {\bf classical Yang-Baxter equation (CYBE)} in $(A, [\cdot,\cdot])$ (see \cite{CP}).

\delete{\begin{pro}\cite[Corollary 3.12]{HBG} \mlabel{Nov-coboundary} If $r$ is a skew-symmetric solution of Novikov Yang-Baxter equation in a Novikov algebra $(A, \circ)$, then $(A, \circ, \Delta_{1,r})$ is a Novikov bialgebra with $\Delta_r$ defined by
\begin{eqnarray}\mlabel{cobnov}
\Delta_{1,r}(a):=-(L_{A,\circ}(a)\otimes \id+\id\otimes L_{A,\star}(a))r,\;\; a\in A.
\end{eqnarray}
\end{pro}}

\delete{Recall that a commutative and cocommutative antisymmetric
infinitesimal bialgebra $(A, \cdot, \Delta_2)$ is called {\bf
coboundary} (see \mcite{Bai1}) if there exists a $r\in A\otimes A$
such that
\begin{eqnarray}\mlabel{cobass}
\Delta_2(a)=(I\otimes L_{A,\cdot}(a)-L_{A,\cdot}(a)\otimes I)
r,\;\;a\in A.
\end{eqnarray}

A Lie bialgebra $(A, [\cdot,\cdot], \delta)$ is called {\bf
coboundary} (see \mcite{CP}) if there exists a $r\in A\otimes A$
such that
\begin{eqnarray}\mlabel{cobass-Lie}
\delta(a)=(\ad_A(a)\otimes I+I\otimes \ad_{A}(a))r,\;\;a\in A.
\end{eqnarray}}

\delete{\begin{pro}\label{ass-coboundary}\cite[Theorem 2.3.5]{Bai1}
 If $r$ is a skew-symmetric solution of AYBE in a commutative associative algebra $(A, \cdot)$, then $(A, \cdot, \Delta_{2,r})$ is a commutative and cocommutative antisymmetric
infinitesimal bialgebra with $\Delta_{2,r}$ defined by
\begin{eqnarray}\mlabel{cobass}
\Delta_{2,r}(a)=(I\otimes L_{A,\cdot}(a)-L_{A,\cdot}(a)\otimes I)
r,\;\;a\in A.
\end{eqnarray}
\end{pro}

\begin{pro}\label{Lie-coboundary}\cite{CP}
 If $r$ is a skew-symmetric solution of CYBE in a Lie algebra $(A, [\cdot,\cdot])$, then $(A, [\cdot,\cdot], \delta_r)$ is a Lie bialgebra with $\delta_r$ defined by \begin{eqnarray}\mlabel{coLie}
\delta_r(a)=(\ad_A(a)\otimes I+I\otimes \ad_{A}(a))r,\;\;a\in A.
\end{eqnarray}
\end{pro}}

\begin{defi}
Let $(A, \circ, \cdot)$ be a differential Novikov-Poisson algebra and $r\in A\otimes A$. The equation
\begin{eqnarray}
{\bf N}(r)={\bf A}(r)=0
\end{eqnarray}
is called the {\bf differential Novikov-Poisson Yang-Baxter equation (DNPYBE)} in $A$.

Let $(A, \circ, \cdot, [\cdot,\cdot])$ be a PGD-algebra and $r\in A\otimes A$. The equation
\begin{eqnarray}
{\bf N}(r)={\bf A}(r)={\bf C}(r)=0
\end{eqnarray}
is called the {\bf Poisson-Gel'fand-Dorfman Yang-Baxter equation (PGDYBE)} in $A$.
\end{defi}

\begin{pro}\label{cob-GD}\cite[Proposition 4.5]{HBG1}
Let $(A, \circ, [\cdot,\cdot])$ be a GD-algebra and $r\in A\otimes A$ be a skew-symmetric solution of both NYBE and CYBE in $A$. Define $\Delta_{1,r}$ and $\delta_r$  by Eq. (\ref{cobnov}) and
\begin{eqnarray}
\mlabel{coLie}
&&\delta_r(a)=(\ad_A(a)\otimes I+I\otimes \ad_{A}(a))r,\;\;a\in A.
\end{eqnarray}
Then $(A, \circ, [\cdot,\cdot], \Delta_{1,r}, \delta_r)$ is a GD-bialgebra.
\end{pro}
\begin{pro}\label{cob-P}\cite[Theorem 2]{NB}
Let $(A, \cdot, [\cdot,\cdot])$ be a Poisson algebra and $r\in A\otimes A$ be a skew-symmetric solution of both AYBE and CYBE in $A$. Define $\delta_r$ and $\Delta_{2,r}$ by Eqs. (\ref{coLie}) and (\ref{cobass}).
Then $(A, \cdot, [\cdot,\cdot], \Delta_{2,r}, \delta_r)$ is a Poisson bialgebra.
\end{pro}

\begin{thm}\label{cob-PGD}
Let $(A, \circ, \cdot, [\cdot,\cdot])$ be a PGD-algebra. If $r\in A\otimes A$ is a skew-symmetric solution of PGDYBE in $A$, then $(A, \circ, \cdot, [\cdot,\cdot], \Delta_{1,r}, \Delta_{2,r}, \delta_r)$ is a PGD-bialgebra where  $\Delta_{1,r}$, $\Delta_{2,r}$ and $\delta_r: A\rightarrow A\otimes A$ be the linear maps defined by Eqs. (\ref{cobnov}), (\ref{cobass}) and (\ref{coLie}).
\end{thm}
\begin{proof}
It is straightforward by Propositions \ref{cob-GD}, \ref{cob-P},  Lemmas \ref{dnp-co1} and \ref{dnp-co2}.
\end{proof}

The following corollary is straightforward.
\begin{cor}\label{cob-DNPB}
Let $(A, \circ, \cdot)$ be a differential Novikov-Poisson algebra. If $r\in A\otimes A$ is a skew-symmetric solution of the DNPYBE in $A$, then $(A, \circ, \cdot, \Delta_{1,r}, \Delta_{2,r})$ is a differential Novikov-Poisson bialgebra where  $\Delta_{1,r}$ and $\Delta_{2,r}: A\rightarrow A\otimes A$ be the linear maps defined by Eqs. (\ref{cobnov}) and (\ref{cobass}).
\end{cor}

Finally, we investigate the relationship between PGDYBE and PCYBE is given as follows.
\begin{pro}\label{conformal Yang-Baxter equation}
Let $(A, \circ, \cdot, [\cdot,\cdot])$ be a PGD-algebra and $r=\sum_i x_i\otimes y_i
\in A\otimes A$ be skew-symmetric. Let $P={\bf k}[\partial]A$
be the Poisson conformal algebra corresponding to $(A, \circ, \cdot, [\cdot,\cdot])$. Then
$r\in P\otimes P$ is a solution of the PCYBE in
$P$ if and only if $r$ is a solution of the PGDYBE in $A$.
\end{pro}
\begin{proof}
By \cite[Proposition 4.17]{HBG1}, $r\in P\otimes P$ is a solution of the CCYBE in $P$ if and only if  $r$ is a solution of both NYBE and CYBE in $A$. It is obvious that $r\in P\otimes P$ is a solution of $r\bullet r\equiv 0 \;\; \text{mod}\;\; (\partial^{\otimes^3})$ in $P$ if and only if  $r$ is a solution of the AYBE in $A$. Then the conclusion follows.
\end{proof}

\begin{cor}\label{constr-cob}
Let $r\in A\otimes A$ be a skew-symmetric solution of the
PGDYBE in a PGD-algebra $(A, \circ, \cdot, [\cdot,\cdot])$ and
$(A, \circ, \cdot, [\cdot,\cdot], \Delta_{1,r}, \Delta_{2,r}, \delta_r)$ be the PGD-bialgebra given in Theorem \ref{cob-PGD} where  $\Delta_{1,r}$ , $\Delta_{2,r}$ and $\delta_r: A\rightarrow A\otimes A$ be the linear maps defined by Eqs. (\ref{cobnov}), (\ref{cobass}) and (\ref{coLie}). Then the coproducts $\delta$ and $\Delta$ defined by
Eq. (\ref{corr-co}) really endows the Poisson conformal algebra
$P={\bf k}[\partial]A$ corresponding to $(A, \circ, \cdot, [\cdot,\cdot])$ a
coboundary Poisson conformal bialgebra associated to $r$,
i.e. $\delta(a)=({\ad_P(a)}_\lambda\otimes I+I\otimes {\ad_P(a)}_\lambda)
r|_{\lambda=-\partial^{\otimes^2}}$ and $\Delta(a)=(I\otimes {L_P(a)}_\lambda-{L_P(a)}_\lambda \otimes I)r|_{\lambda=-\partial^{\otimes^2}}$ for all $a\in R$.
\end{cor}
\begin{proof}
By \cite[Corollary 4.18]{HBG1}, $\delta(a)=({\text{ad}_P(a)}_\lambda\otimes I+I\otimes {\text{ad}_P(a)}_\lambda)
r|_{\lambda=-\partial^{\otimes^2}}=(\partial\otimes I)\Delta_{1,r}(a)-\tau (\partial\otimes I)\Delta_{1,r}(a)+\delta_r(a)$ for all $a\in A$. It is obvious that
$\Delta(a)=(I\otimes {L_P(a)}_\lambda-{L_P(a)}_\lambda \otimes I)r|_{\lambda=-\partial^{\otimes^2}}=\Delta_{2,r}(a)$ for all $a\in A$. Since $\delta$ and $\Delta$ are ${\bf k}[\partial]$-module homomorphisms, then this conclusion follows.
\end{proof}

\subsection{Representation and the operator form of PGDYBE}

Recall \mcite{O3} that
a {\bf representation} of a Novikov algebra $(A,\circ)$ is a triple $(V, l_A,r_A)$, where $V$ is a vector space and   $l_A$, $r_A: A\rightarrow {\rm
End}_{\bf k}(V)$ are linear maps satisfying
\begin{eqnarray}
\mlabel{lef-mod1}
&&l_A(a\circ b-b\circ a)v=l_A(a)l_A(b)v-l_A(b)l_A(a)v,\\
\mlabel{lef-mod2}
&&l_A(a)r_A(b)v-r_A(b)l_A(a)v=r_A(a\circ b)v-r_A(b)r_A(a)v,\\
\mlabel{Nov-mod1}\
&&l_A(a\circ b)v=r_A(b)l_A(a)v,
\\
\mlabel{Nov-mod2}
&&r_A(a)r_A(b)v=r_A(b)r_A(a)v, \;\;
\;a, b\in A, v\in V.
\end{eqnarray}

Let $(A, \circ, [\cdot, \cdot])$ be a GD-algebra and $V$ be a vector space. Let $\rho_A$, $l_A$, and $r_A$: $A \rightarrow {\rm
        End}_{\bf k}(V)$ be three linear maps. Then $(V, l_A, r_A, \rho_A)$ is called a {\bf representation} of $(A, \circ, [\cdot, \cdot])$ if $(V,\rho_A)$ is a representation of the Lie algebra $(A, [\cdot,\cdot])$, $(V, l_A, r_A)$ is a representation of the Novikov algebra $(A, \circ)$, and the following conditions are satisfied
    \begin{eqnarray}
        &&\mlabel{GF-rep-1}\;\;\rho_A(a)l_A(b)v+\rho_A(b\circ a)v+l_A([b,a])v-r_A(a)\rho_A(b)v-l_A(b)\rho_A(a) v=0,\\
        &&\mlabel{GF-rep-2}\;\;\rho_A(a)r_A(b)v-\rho_A(b)r_A(a)v-r_A(b)\rho_A(a) v+r_A(a)\rho_A(b) v-r_A([a,b])v =0,\;a, b\in A,\;v\in V.
    \end{eqnarray}

Let $(A, [\cdot,\cdot], \cdot)$ be a Poisson algebra and $V$ be a vector space. Let $\rho_A$ and $\sigma_A$: $A \rightarrow {\rm
        End}_{\bf k}(V)$ be linear maps. Then $(V, \rho_A, \sigma_A)$ is called a {\bf representation} of $(A, [\cdot, \cdot], \cdot)$ if $(V,\rho_A)$ is a representation of the Lie algebra $(A, [\cdot,\cdot])$, $(V, \sigma_A)$ is a representation of the commutative associative algebra $(A, \cdot)$, and the following conditions are satisfied
    \begin{eqnarray}
        &&\mlabel{P-rep-1}\;\;\rho_A(a\cdot b)v=\sigma_A(b)(\rho_A(a)v)+\sigma_A(a)(\rho_A(b)v),\\
        &&\mlabel{GF-rep-2}\;\;\rho_A(a)(\sigma_A(b)v)=\sigma_A([a,b])v+\sigma_A(b)(\rho_A(a)v),\;\;\;a, b\in A,\; v\in V.
    \end{eqnarray}

Next, we introduce the definition of representations of differential Novikov-Poisson algebras and PGD-algebras.
\begin{defi}
Let $(A, \circ, \cdot)$ be a differential Novikov-Poisson algebra and $V$ be a vector space. Let $\sigma_A$, $l_A$, and $r_A$: $A \rightarrow {\rm End}_{\bf k}(V)$ be three linear maps. Then $(V, l_A, r_A, \sigma_A)$ is called a {\bf representation} of $(A, \circ, \cdot)$ if $(V,\sigma_A)$ is a representation of the commutative associative algebra $(A, \cdot)$, $(V, l_A, r_A)$ is a representation of the Novikov algebra $(A, \circ)$, and
\begin{eqnarray}
&&\label{DNP-rep-1}l_A(a\cdot b)v=\sigma_A(a)l_A(b)v,\;\; r_A(b)\sigma_A(a)v=\sigma_A(a)r_A(b)v=\sigma_A(a\circ b)v,\\
&&\label{DNP-rep-2}l_A(a)\sigma_A(b)v=\sigma_A(a\circ b)v+\sigma_A(b)l_A(a)v,\\
&&\label{DNP-rep-2}r_A(a\cdot b)v=\sigma_A(b)r_A(a)v+\sigma_A(a)r_A(b)v,\;\;\;a, b\in A,\;\; v\in V.
\end{eqnarray}

Let $(A, \circ, \cdot, [\cdot,\cdot])$ be a PGD-algebra and $V$ be a vector space. $(V, l_A, r_A, \sigma_A, \rho_A)$ is called a {\bf representation} of $(A, \circ, \cdot, [\cdot,\cdot])$ if $(V, l_A, r_A, \sigma_A)$ is a representation of the differential Novikov-Poisson algebra $(A, \circ, \cdot)$,  $(V, l_A, r_A, \rho_A)$ is a representation of the GD-algebra $(A, \circ, \cdot)$, and $(V, \rho_A, \sigma_A)$ is a representation of the Poisson algebra $(A, [\cdot,\cdot], \cdot)$.
\end{defi}

\begin{rmk}\label{ad-resp}
Obviously, $(A, L_{A,\circ}, R_{A,\circ}, L_{A,\cdot}, \ad_A)$ is a representation of $(A, \circ, \cdot, [\cdot,\cdot])$, which is called the {\bf adjoint representation} of  $(A, \circ, \cdot, [\cdot,\cdot])$.
\end{rmk}

\begin{pro}
Let $(A, \circ, \cdot, [\cdot,\cdot])$ be a PGD-algebra and $V$ be a vector space. Let $l_A$, $r_A$, $\rho_A$, $\sigma_A: A\rightarrow {\rm
End}_{\bf k}(V)$ be four linear maps. Define two binary operations on the direct sum $A\oplus V$ as vector spaces as follows
\begin{eqnarray}
&&(a+u)\circ (b+v):=a\circ b+l_A(a)v+r_A(b)u,\\
&& (a+u)\cdot (b+v):=a\cdot b+\sigma_A(a)v+\sigma_A(b)u,\\
&&[a+u,b+v]:=[a, b]+\rho_A(a)v-\rho_A(b)u,\;\;a, b\in A, u, v\in V.
\end{eqnarray}
Then $(A\oplus V, \circ, \cdot, [\cdot,\cdot])$ is a PGD-algebra if and only if $(V, l_A, r_A, \sigma_A,\rho_A)$ is a representation of $(A, \circ, \cdot, [\cdot,\cdot])$. Denote this PGD-algebra by $A\ltimes_{l_A,r_A}^{\sigma_A,\rho_A} V$, which is called {\bf the semi-direct product} of $(A, \circ, \cdot, [\cdot,\cdot])$ and its representation $(V, l_A, r_A, \sigma_A,\rho_A)$.
\end{pro}
\begin{proof}
It is straightforward.
\end{proof}

Let $(A, \circ, \cdot, [\cdot,\cdot])$ be a PGD-algebra and $V$ be a vector space. Let $\varphi:A \rightarrow {\rm
End}_{\bf k}(V)$ be a linear map. Define a liner map $\varphi^\ast:A \rightarrow {\rm
End}_{\bf k}(V^\ast)$ by
\begin{eqnarray*}
\langle \varphi^\ast(a)f, u\rangle=-\langle f, \varphi(a)u\rangle,\;\;\; a\in A, f\in V^\ast, u\in V.
\end{eqnarray*}

\begin{pro}\label{dual-resp}
Let $(A, \circ, \cdot, [\cdot,\cdot])$ be a PGD-algebra and $(V, l_A, r_A, \sigma_A, \rho_A)$ be a representation of $(A, \circ, \cdot, [\cdot,\cdot)$.
Then $(V^\ast, l_A^\ast+r_A^\ast, -r_A^\ast, -\sigma_A^\ast, \rho_A^\ast)$ is a representation of $(A, \circ, \cdot, [\cdot,\cdot])$.
\end{pro}
\begin{proof}
It is straightforward to check.
\delete{By \cite[Proposition 3.3]{HBG}, $(V^\ast, l_A^\ast+r_A^\ast, -r_A^\ast)$ is a representation of $(A, \circ)$. Moreover, $(V^\ast, -\rho_A^\ast)$ is a representation of $(A, \cdot)$. Let $a$, $b\in A$, $f\in V^\ast$ and $v\in V$. Then we obtain
\begin{eqnarray*}
&&\langle (l_A^\ast+r_A^\ast)(a\cdot b)f+\rho_A^\ast(a)(l_A^\ast+r_A^\ast)(b)f,v\rangle\\
&=& \langle f, -l_A(a\cdot b)v-r_A(a\cdot b)v+l_A(b)\rho_A(a)v+r_A(b)\rho_A(a)v\rangle\\
&=& \langle f, -l_A(a\cdot b)v-\rho_A(a)r_A(b)v-\rho_A(b)r_A(a)v+l_A(b)\rho_A(a)v+r_A(b)\rho_A(a)v\rangle\\
&=&  \langle f, -l_A(a\cdot b)v-\rho_A(a)r_A(b)v+l_A(b)\rho_A(a)v\rangle\\
&=&\langle f, -l_A(a\cdot b)v-\rho_A(b\circ a)v+l_A(b)\rho_A(a)v\rangle\\
&=& langle f, -l_A(a\cdot b)v+\rho_A(a)l_A(b)v\rangle\\
&=& 0.
\end{eqnarray*}
Therefore, we get $(l_A^\ast+r_A^\ast)(a\cdot b)f=-\rho_A^\ast(a)(l_A^\ast+r_A^\ast)(b)f$. Similarly, other equalities can be checked.}
\end{proof}

\begin{rmk}
By Remark \ref{ad-resp} and Proposition \ref{dual-resp}, $(A^\ast, L_{A,\circ}^\ast+R_{A,\circ}^\ast, -R_{A,\circ}^\ast, -L_{A,\cdot}^\ast, \ad_A^\ast)$ is a representation of $(A, \circ, \cdot, [\cdot,\cdot])$.
\end{rmk}

\begin{cor}
Let $(A, \circ, \cdot)$ be a differential Novikov-Poisson algebra and $(V, l_A, r_A, \sigma_A)$ be a representation of $(A, \circ, \cdot)$.
Then $(V^\ast, l_A^\ast+r_A^\ast, -r_A^\ast, -\sigma_A^\ast)$ is a representation of $(A, \circ, \cdot)$.
\end{cor}

Next, we investigate the operator form of the PGDYBE.

\begin{defi}
Let $(A, \circ, \cdot, [\cdot,\cdot])$ be a PGD-algebra and $(V, l_A, r_A, \rho_A)$ be its representation. A linear map
$T: V\rightarrow A$ is called an {\bf $\mathcal{O}$-operator} on $(A, \circ, \cdot, [\cdot,\cdot])$ associated to $(V, l_A, r_A, \rho_A)$ if $T$ satisfies
\begin{eqnarray}
&&\label{operr3}T(u)\circ T(v)=T(l_A(T(u))v+r_A(T(v))u),\\
&&\label{operr4}T(u)\cdot T(v)=T(\rho_A(T(u))v+\rho_A(T(v))u),\\
&&\label{operr5}[T(u),T(v)]=T(\rho_A(T(u))v-\rho_A(T(v))u),\;\;u, v\in V.
\end{eqnarray}
\end{defi}

For a finite-dimensional vector space $A$, the isomorphism
$$A\otimes A\cong
\text{Hom}_{\bf k}(A^*,{\bf k})\otimes A\cong \text{Hom}_{\bf k}(A^*,A)
$$
identifies an $r\in A\otimes
A$ with a map from $A^*$ to $A$ which we denote by $T^r$.
Explicitly, writing $r=\sum_{i}x_i\otimes y_i$, then
\begin{equation}\label{eq:4.12} \notag
T^r:A^*\to A, \quad T^r(f)=\sum_{i}\langle f, x_i\rangle y_i,\;\; f\in A^*.
\end{equation}

\begin{pro}\label{oper-form1}
Let $(A, \circ, \cdot, [\cdot,\cdot])$ be a PGD-algebra and $r\in A\otimes A$ be skew-symmetric. Then $r$ is a solution of the PGDYBE in $(A, \circ, \cdot, [\cdot,\cdot])$ if and only if $T^r$ is an $\mathcal{O}$-operator on $(A, \circ, \cdot)$ associated to $(A^\ast, L_{A,\star}^\ast, -R_{A,\circ}^\ast, -L_{A,\cdot}^\ast, \ad_A^\ast)$.
\end{pro}
\begin{proof}
Note that $r$ is a solution of the AYBE in $(A, \cdot)$ if and only if Eq. (\ref{operr4}) holds by \cite[Theorem 2.4.7]{Bai1} and $r$ is a solution of the CYBE in $(A, [\cdot,\cdot])$ if and only if Eq. (\ref{operr5}) holds by \cite{Kuper}. By \cite[Theorem 3.27]{HBG}, $r$ is a solution of the NYBE in $(A, \circ)$ if and only if Eq. (\ref{operr3}) holds. Then this proposition follows.
\end{proof}

\delete{Proposition \ref{oper-form1} motivates us to give the following definition.

\begin{defi}
Let $(A, \circ, \cdot)$ be a differential Novikov-Poisson algebra and $(V, l_A, r_A, \rho_A)$ be its representation. A linear map
$T: V\rightarrow A$ is called an {\bf $\mathcal{O}$-operator} on $(A, \circ, \cdot)$ associated to $(V, l_A, r_A, \rho_A)$ if $T$ satisfies
\begin{eqnarray}
&&\label{operr3}T(u)\circ T(v)=T(l_A(T(u))v+r_A(T(v))u),\\
&&\label{operr4}T(u)\cdot T(v)=T(\rho_A(T(u))v+\rho_A(T(v))u),\;\;u, v\in V.
\end{eqnarray}
\end{defi}

\begin{rmk}
By Proposition \ref{oper-form1}, if $r$ is skew-symmetric, $r$ is a solution of the PGDYBE in $(A, \circ, \cdot)$ if and only if $T^r$ is an $\mathcal{O}$-operator on $(A, \circ, \cdot)$ associated to $(A^\ast, L_{A,\star}^\ast, -R_{A,\circ}^\ast, -L_{A,\cdot}^\ast)$.
\end{rmk}}

\begin{thm}\label{oper-form2}
Let $(A, \circ, \cdot, [\cdot,\cdot])$ be a PGD-algebra and $(V, l_A, r_A, \sigma_A, \rho_A)$ be its representation. Let $T: V\rightarrow A$ be a linear map which is
identified with $r_T\in A\otimes V^\ast \subseteq
(A\ltimes_{l_A^\ast+r_A^\ast,-r_A^\ast}^{ -\sigma_A^\ast, \rho_A^\ast}V^\ast) \otimes
(A\ltimes_{l_A^\ast+r_A^\ast,-r_A^\ast}^{ -\sigma_A^\ast, \rho_A^\ast} V^\ast)$ through
 ${\rm Hom}_{\bf k}(V, A)\cong A\otimes V^\ast$.
Then $r=r_T-\tau r_T$ is a solution of the PGDYBE in the PGD-algebra
$(A\ltimes_{l_A^\ast+r_A^\ast,-r_A^\ast}^{ -\sigma_A^\ast, \rho_A^\ast} V^\ast, \circ, \cdot, [\cdot,\cdot])$ if and only if $T$ is an
$\mathcal{O}$-operator on $(A,\circ, \cdot, [\cdot,\cdot])$ associated to $(V, l_A,
r_A, \sigma_A, \rho_A)$.
\end{thm}
\begin{proof}
It is straightforward by \cite{Bai}, \cite[Theorem 3.29]{HBG} and \cite[Theorem 2.5.5]{Bai1}.
\end{proof}
}
\subsection{Poisson conformal bialgebras from pre-PGD-algebras}
\begin{defi}\label{ND} \cite{HBG}
Let $A$ be a vector space with binary operations $ \lhd$ and
$\rhd$. If the following equalities are satisfied
\begin{align}
&a\rhd(b\rhd c)=(a\rhd b+a\lhd b)\rhd c+b\rhd(a\rhd c)-(b\rhd a+b\lhd a)\rhd c,\label{ND1}\\
&a\rhd(b\lhd c)=(a\rhd b)\lhd c+b\lhd(a\lhd c+a\rhd c)-(b\lhd a)\lhd c,\label{ND2}\\
&(a\lhd b+a\rhd b)\rhd c=(a\rhd c)\lhd b,\label{ND3}\\
&(a\lhd b)\lhd c=(a\lhd c)\lhd b,\label{ND4}
\end{align}
 for all $a$, $b$ and $c\in A$,
then $(A,\lhd,\rhd)$ is called a \bf{pre-Novikov algebra}.
\end{defi}

Recall \cite{Lo} that a {\bf Zinbiel algebra} $(A, \succ)$ is a
vector space $A$ with a binary operation $\succ: A\otimes
A\rightarrow A$ satisfying the following condition
\begin{eqnarray*}
a\succ (b\succ c)=(b\succ a+a\succ b)\succ c,\;\;\;a, b, c\in A.
\end{eqnarray*}


Recall \cite{XH} that a {\bf pre-Gel'fand-Dorfman algebra
(pre-GD-algebra)} $(A,\lhd,\rhd,\diamond)$ is a vector space $A$
with three binary operations $ \lhd$, $\rhd$ and $\diamond$, where
$(A, \lhd, \rhd)$ is a pre-Novikov algebra, $(A, \diamond)$ is a
left-symmetric algebra and they satisfy the following
compatibility conditions {\small \begin{eqnarray*}
&&c\lhd(a\diamond b-b\diamond a)-a\diamond(c\lhd b)-(b\diamond c)\lhd a=-b\diamond(c\lhd a)-(a\diamond c)\lhd b,\mlabel{lnd1}\\
&&(a\diamond b-b\diamond a)\rhd c+(a\lhd b+a\rhd b)\diamond c
=a\rhd(b\diamond c)-b\diamond(a\rhd c)+(a\diamond c)\lhd b, \;\;a,
b, c\in A.\mlabel{lnd2}
\end{eqnarray*}}

 Recall \cite{A3} that a {\bf pre-Poisson algebra}
$(A,\diamond,\succ)$ is a vector space $A$ with two binary
operations $\diamond$ and $\succ$ such that $(A, \diamond)$ is a
left-symmetric algebra, $(A, \succ)$ is a Zinbiel algebra and the
following compatibility conditions hold.
\begin{eqnarray}
&&\label{PP-eq1}(a\diamond b-b\diamond a)\succ c=a\diamond (b\succ c)-b\succ (a\diamond c),\\
&&\label{PP-eq2}(a\succ b+b\succ a)\diamond  c=a\succ(b\diamond c)+b\succ (a\diamond c),\;\;a, b, c\in A.
\end{eqnarray}

\begin{defi}
Let $A$ be a vector space with binary operations  $ \lhd$, $\rhd$
and $\succ$. If $(A, \lhd, \rhd)$ is a pre-Novikov algebra, $(A,
\succ)$ is a Zinbiel algebra and they satisfy the following
compatibility condition
\begin{eqnarray}
&&\label{PDNP-eq1}(a\succ b+b\succ a)\rhd c=a\succ (b\rhd c),\;\; (a\succ c)\lhd b=a\succ (c\lhd b)=(a\lhd b+a\rhd b)\succ c,\\
&&\label{PDNP-eq2}a\rhd (b\succ c)=(a\lhd b+a\rhd b)\succ c+b\succ (a\rhd c),\\
&& \label{PDNP-eq3}c\lhd (a\succ b+b\succ a)=b\succ (c\lhd a)+a\succ (c\lhd b),\;\;\; a, b, c\in A,
\end{eqnarray}
then $(A, \lhd, \rhd, \succ)$ is called a {\bf pre-differential Novikov-Poisson algebra}.
\end{defi}

\begin{defi} A 5-tuple
$(A, \lhd, \rhd, \succ, \diamond)$  is called a {\bf pre-Poisson-Gel'fand-Dorfman algebra (pre-PGD-algebra)} if $(A, \lhd, \rhd, \succ)$ is a pre-differential Novikov-Poisson algebra, $(A, \lhd, \rhd, \diamond)$ is a pre-GD-algebra and $(A, \diamond, \succ)$ is a pre-Poisson algebra.
\end{defi}

Obviously pre-GD-algebras, pre-Poisson algebras and
pre-differential Novikov-Poisson algebras are special
pre-PGD-algebras. Moreover, we give a construction of
pre-differential Novikov-Poisson algebras from Zinbiel algebras with a derivation as
follows.
\begin{pro}\label{constr-PDNP}
Let $(A,\succ)$ be a Zinbiel algebra with a derivation $D$. Define
\begin{eqnarray*}
a\lhd b:= D(b)\succ a, \;\; a\rhd b:= a\succ D(b),\;\; a, b\in A.
\end{eqnarray*}
Then  $(A, \lhd, \rhd, \succ)$ is a pre-differential Novikov-Poisson algebra.
\end{pro}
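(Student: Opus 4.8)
The plan is to verify, one at a time, the three requirements in the definition of a pre-differential Novikov-Poisson algebra: that $(A,\lhd,\rhd)$ is a pre-Novikov algebra, that $(A,\succ)$ is a Zinbiel algebra (which is the hypothesis), and that the compatibility conditions \eqref{PDNP-eq1}, \eqref{PDNP-eq2} and \eqref{PDNP-eq3} hold. The only tools needed are the defining Zinbiel identity $a\succ(b\succ c)=(b\succ a+a\succ b)\succ c$ and the Leibniz rule $D(a\succ b)=D(a)\succ b+a\succ D(b)$, together with the definitions $a\lhd b=D(b)\succ a$ and $a\rhd b=a\succ D(b)$.

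The computational engine is uniform across all the identities. First I rewrite every occurrence of $\lhd$ and $\rhd$ in terms of $\succ$ and $D$, and whenever a $D$ lands on a $\succ$-product I expand it by the Leibniz rule; this reduces each side of a given identity to a $\bfk$-linear combination of nested terms $u\succ(v\succ w)$, with $u,v,w$ ranging over $a,b,c$ and their images under $D$ and $D^2$. Applying the Zinbiel identity to each such term pulls the rightmost factor out, turning it into $(v\succ u+u\succ v)\succ w$. After this normalization both sides become sums of expressions $(\,\cdots)\succ x$ sharing a common rightmost factor $x$, and the desired equality is read off by matching the bracketed sums; the symmetric combination $v\succ u+u\succ v$ is exactly what makes the matching insensitive to argument order.

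For the pre-Novikov axioms, identities \eqref{ND3} and \eqref{ND4} fall out immediately, since after a single application of the Zinbiel identity both sides carry the symmetric factor and equality is manifest. Identities \eqref{ND1} and \eqref{ND2} are the delicate ones: expanding $D$ of a product that already contains a $\lhd$ or $\rhd$ produces second-order terms involving $D^2$ as well as cross terms such as $D(a)\succ D(c)$. The point to check is that after normalization the $D^2$-terms on the two sides coincide while all cross terms cancel in pairs. This cancellation is the main obstacle of the proof, though it is a matter of careful bookkeeping rather than any new idea; the mechanism parallels the classical fact that a commutative associative algebra equipped with a derivation becomes a Novikov algebra.

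Finally, the compatibility conditions \eqref{PDNP-eq1}--\eqref{PDNP-eq3} are handled by the same procedure and are more transparent, since they involve $D$ only to first order. For instance, for \eqref{PDNP-eq2} one writes $a\rhd(b\succ c)=a\succ D(b\succ c)=a\succ(D(b)\succ c)+a\succ(b\succ D(c))$ by the Leibniz rule, normalizes each summand by the Zinbiel identity, and matches the outcome against $(a\lhd b+a\rhd b)\succ c+b\succ(a\rhd c)$ using the symmetry of $a\succ b+b\succ a$; conditions \eqref{PDNP-eq1} and \eqref{PDNP-eq3} follow in the same way. Assembling these verifications yields that $(A,\lhd,\rhd,\succ)$ is a pre-differential Novikov-Poisson algebra.
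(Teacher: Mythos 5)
Your proposal is correct and matches the paper's proof, which is simply the assertion ``It is straightforward,'' i.e., the same direct verification you outline: expand $\lhd$ and $\rhd$ via $D$ and $\succ$, apply the Leibniz rule, and normalize with the Zinbiel identity so that both sides share the symmetric factor $u\succ v+v\succ u$ (I checked in particular that your claimed cancellation of the cross terms in \eqref{ND1} and \eqref{ND2} does go through). Your bookkeeping scheme supplies exactly the details the paper omits, so nothing is missing.
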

\begin{proof}
It is straightforward.
\end{proof}

There is a correspondence between pre-PGD-algebras and a class of pre-Poisson conformal algebras.
\begin{pro}\label{corresp-pre}
Let $A$ be a vector space with four binary operations $\lhd$,
$\rhd$, $\diamond$, $\succ$ and $P={\bf k}[\partial]\otimes_{\bf k} A$ be a free ${\bf k}[\partial]$-module. Define
\begin{eqnarray*}
a\circ_\lambda b:=\partial(b\lhd a)+\lambda (a\rhd b+b\lhd a)+a\diamond b,\;\; a\succ_\lambda b:=a\succ b,\;\;a, b\in A.
\end{eqnarray*}
Then $(P, \circ_\lambda, \succ_\lambda)$ is a pre-Poisson
conformal algebra if and only if $(A, \lhd, \rhd, \succ, \diamond)$
is a pre-PGD-algebra. In this case, $(P, \circ_\lambda,
\succ_\lambda)$ is called the {\bf pre-Poisson conformal algebra
corresponding to $(A, \lhd, \rhd, \succ, \diamond)$.}
\end{pro}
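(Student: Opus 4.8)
The plan is to exploit that both notions are assembled from the same three kinds of data, so the equivalence can be proved componentwise. A pre-Poisson conformal algebra consists of a Zinbiel conformal algebra $(P,\succ_\lambda)$, a left-symmetric conformal algebra $(P,\circ_\lambda)$, and the two compatibility conditions (\ref{pre-P1}) and (\ref{pre-P2}); a pre-PGD-algebra $(A,\lhd,\rhd,\succ,\diamond)$ consists of the Zinbiel algebra $(A,\succ)$, the pre-GD-algebra $(A,\lhd,\rhd,\diamond)$, and the extra identities (\ref{PDNP-eq1})--(\ref{PDNP-eq3}), (\ref{PP-eq1}), (\ref{PP-eq2}). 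Accordingly I would split the proof into three matching steps, in each of which I substitute the defining formulas for $\circ_\lambda$ and $\succ_\lambda$ and compare coefficients. Throughout I use that every expression produced is at most linear in $\partial$, so that an identity in $P={\bf k}[\partial]\otimes_{\bf k}A$ may be checked by separating its $\partial^0\otimes A$ and $\partial^1\otimes A$ components and then matching the coefficients of each monomial $\lambda^i\mu^j$; this is legitimate because $P$ is free over ${\bf k}[\partial]$ and $\lambda,\mu$ are formal.

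Step one treats $\succ_\lambda$. Since $a\succ_\lambda b=a\succ b$ is independent of the spectral parameter, evaluating the Zinbiel conformal identity on generators $a,b,c\in A$ collapses $a\succ_\lambda(b\succ_\mu c)$ to $a\succ(b\succ c)$ and $(a\succ_\lambda b+b\succ_{-\lambda-\partial}a)\succ_{\lambda+\mu}c$ to $(a\succ b+b\succ a)\succ c$, so $(P,\succ_\lambda)$ is a Zinbiel conformal algebra if and only if $(A,\succ)$ is a Zinbiel algebra. Step two treats $\circ_\lambda$: I would show that $(P,\circ_\lambda)$ is a left-symmetric conformal algebra if and only if $(A,\lhd,\rhd,\diamond)$ is a pre-GD-algebra. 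This is the ``pre'' analogue of the correspondence between Lie conformal algebras with bracket $\partial(b\circ a)+\lambda(b\star a)+[a,b]$ and GD-algebras used in Proposition~\ref{constr-PCA}; I would obtain it either by citing that analogue or by expanding the left-symmetry identity, at which point matching the $\partial$-degrees and the monomials in $\lambda,\mu$ reproduces exactly the four pre-Novikov axioms (\ref{ND1})--(\ref{ND4}), left-symmetry of $\diamond$, and the two pre-GD compatibility conditions. I expect this step to be the main obstacle, since the left-symmetry identity is cubic and generates the largest number of separate monomials to track.

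Step three is the heart of the componentwise matching. Substituting the formulas into (\ref{pre-P1}), a short computation gives $a\circ_\lambda b-b\circ_{-\lambda-\partial}a=\partial(b\lhd a+b\rhd a)+\lambda(a\lhd b+a\rhd b+b\lhd a+b\rhd a)+(a\diamond b-b\diamond a)$, and one applies $\succ_{\lambda+\mu}c$ on the left while expanding the right-hand side by sesquilinearity. Separating the $\partial^1$-part then forces $(b\succ c)\lhd a=b\succ(c\lhd a)$; the coefficient of $\mu$ forces $(b\lhd a+b\rhd a)\succ c=b\succ(c\lhd a)$ (together these are the double equality in (\ref{PDNP-eq1})); the coefficient of $\lambda$ yields (\ref{PDNP-eq2}); and the part free of $\lambda$ and $\mu$ yields (\ref{PP-eq1}). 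Doing the same with (\ref{pre-P2}), where $a\succ_\lambda b+b\succ_{-\lambda-\partial}a=a\succ b+b\succ a$ is independent of the spectral parameters and lies in $A$, the $\partial^1$-part gives (\ref{PDNP-eq3}), the coefficients of $\lambda$ and $\mu$ give the first identity in (\ref{PDNP-eq1}) (the two being interchanged by $a\leftrightarrow b$, since $a\succ b+b\succ a$ is symmetric), and the part free of $\lambda$ and $\mu$ gives (\ref{PP-eq2}). Since the pre-differential-Novikov-Poisson and pre-Poisson substructures share their pre-Novikov, left-symmetric and Zinbiel parts with the pre-GD and Zinbiel data already matched in steps one and two, conditions (\ref{pre-P1}) and (\ref{pre-P2}) hold precisely when $(A,\lhd,\rhd,\succ)$ is a pre-differential Novikov-Poisson algebra and $(A,\diamond,\succ)$ is a pre-Poisson algebra, which completes the equivalence. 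The only real care needed is bookkeeping: confirming that no spurious monomial (for instance a $\partial\lambda$ or $\partial\mu$ term) survives, which is guaranteed by the at-most-linear-in-$\partial$ observation above.
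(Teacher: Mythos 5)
Your proposal is correct and matches the paper's own proof essentially step for step: the paper likewise notes that the Zinbiel conformal part is immediate, invokes the known equivalence (there, \cite[Theorem 2.19]{XH}) for the left-symmetric conformal/pre-GD step, and then expands Eqs.~(\ref{pre-P1}) and (\ref{pre-P2}) on elements of $A$, comparing the coefficients of $\partial$, $\lambda$, $\mu$ and the constant term to recover exactly the identities you list. The only point you leave implicit---as does the paper under ``it is straightforward to show''---is that the raw $\lambda$- and $\mu$-coefficients (the paper's Eqs.~(\ref{eq-q2}), (\ref{eq-q5}), (\ref{eq-q6})) reduce to (\ref{PDNP-eq2}) and the first identity of (\ref{PDNP-eq1}) only after substituting the $\partial$-coefficient identities, which is harmless since the equivalence is between the full systems of equations.
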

\begin{proof}
By \cite[Theorem 2.19]{XH}, $(P, \circ_\lambda)$ is a
left-symmetric conformal algebra if and only if $(A, \lhd, \rhd,
\diamond)$ is a pre-GD-algebra. It is also obvious that $(P,
\succ_\lambda)$ is a Zinbiel conformal algebra if and only if $(A,
\succ)$ is a Zinbiel algebra. Let $a$, $b$ and $c\in A$. Then we
obtain
\begin{eqnarray*}
&&(a\circ_\lambda b-b\circ_{-\lambda-\partial}a)\succ_{\lambda+\mu}c-a\circ_\lambda(b\succ_\mu c)+b\succ_\mu (a\circ_\lambda c)\\
&=&(-\lambda-\mu)(b\lhd a+b\rhd a)\succ c+\lambda(a\rhd b+b\lhd a+b\rhd a+a\lhd b)\succ c\\
&&\quad+(a\diamond b-b\diamond a)\succ  c-\partial(b\succ c)\lhd a-\lambda(a\rhd (b\succ c)+(b\succ c)\lhd a)-a\diamond (b\succ c)\\
&&\quad +(\partial+\mu)b\succ (c\lhd a)+\lambda b\succ (a\rhd c+c\lhd a)+b\succ (a\diamond c).
\end{eqnarray*}
Comparing the coefficients of $\partial$, $\lambda$, $\mu$ and
$\lambda^0\mu^0\partial^0$, we show that Eq. (\ref{pre-P1}) holds
if and only if the following equalities hold
\begin{eqnarray}
&&\label{eq-q1}(b\succ c)\lhd a=b\succ (c\lhd a),\\
&&\label{eq-q2}(a\rhd b+a\lhd b)\succ c-a\rhd (b\succ c)-(b\succ c)\lhd a+b\succ (a\rhd c+c\lhd a)=0,\\
&&\label{eq-q3}(b\lhd a+b\rhd a)\succ c=b\succ (c\lhd a),\\
&&\label{eq-qq4} (a\diamond b-b\diamond a)\succ c-a\diamond (b\succ c)+b\succ (a\diamond c)=0.
\end{eqnarray}
Similarly, we show that  Eq. (\ref{pre-P2}) holds if and only if
the following equalities hold
\begin{eqnarray}
&&\label{eq-q4}c\lhd (a\succ b+b\succ a)=a\succ(c\lhd b)+b\succ (c\lhd a),\\
&&\label{eq-q5}(a\succ b+b\succ a)\rhd c+c\lhd (a\succ b+b\succ a)=a\succ (c\lhd b)+b\succ (a\rhd c+c\lhd a),\\
&&\label{eq-q6}(a\succ b+b\succ a)\rhd c+c\lhd (a\succ b+b\succ a)=a\succ (b\rhd c+c\lhd b)+b\succ (c\lhd a),\\
&&\label{eq-q7}(a\succ b+b\succ a)\diamond c=a\succ (b\diamond c)+b\succ (a\diamond c).
\end{eqnarray}
Then it is straightforward to show that Eqs.
(\ref{eq-q1})-(\ref{eq-q7}) hold if and only if Eqs.
(\ref{PP-eq1})-(\ref{PDNP-eq3}) hold. Thus the conclusion follows.
\end{proof}

\delete{Let $(A,\lhd,\rhd, \ast)$ be a pre-differential Novikov-Poisson algebra. Define linear maps $L_{\rhd}$, $R_{\lhd}$, $L_{\ast}: A\rightarrow \text{End}_{\bf k}(A)$ by
\begin{eqnarray*}
L_{\rhd}(a)(b):=a\rhd b, \;\;\; R_{\lhd}(a)(b):=b\lhd a,\;\;\;L_{\ast}(a)(b):=a\ast b, \;\;a, b\in A.
\end{eqnarray*}}

\begin{cor}
Let $A$ be a vector space with three binary operations $\lhd$,
$\rhd$, $\succ$ and $P={\bf k}[\partial]\otimes_{\bf k} A$ be a free ${\bf k}[\partial]$-module. Define
\begin{eqnarray*}
a\circ_\lambda b:=\partial(b\lhd a)+\lambda (a\rhd b+b\lhd a),\;\; a\succ_\lambda b:=a\succ b,\;\;a, b\in A.
\end{eqnarray*}
Then $(P, \circ_\lambda, \succ_\lambda)$ is a pre-Poisson
conformal algebra if and only if $(A, \lhd, \rhd, \succ)$ is a
pre-differential Novikov-Poisson algebra.
\end{cor}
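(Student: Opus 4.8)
The plan is to obtain this as the special case of Proposition~\ref{corresp-pre} in which the operation $\diamond$ is trivial. First I would set $a\diamond b:=0$ for all $a,b\in A$ in Proposition~\ref{corresp-pre}. With this choice the $\lambda$-product $a\circ_\lambda b=\partial(b\lhd a)+\lambda(a\rhd b+b\lhd a)+a\diamond b$ collapses to exactly the product $a\circ_\lambda b=\partial(b\lhd a)+\lambda(a\rhd b+b\lhd a)$ appearing in the present statement, while $a\succ_\lambda b=a\succ b$ is unchanged. Hence the triple $(P,\circ_\lambda,\succ_\lambda)$ of the corollary is precisely the pre-Poisson conformal algebra associated, via Proposition~\ref{corresp-pre}, to the 5-tuple $(A,\lhd,\rhd,\succ,\diamond)$ with $\diamond$ trivial.

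Next I would verify that, when $\diamond$ is trivial, a pre-PGD-algebra $(A,\lhd,\rhd,\succ,\diamond)$ carries exactly the same data as a pre-differential Novikov-Poisson algebra $(A,\lhd,\rhd,\succ)$. By definition a pre-PGD-algebra requires that $(A,\lhd,\rhd,\succ)$ be a pre-differential Novikov-Poisson algebra, that $(A,\lhd,\rhd,\diamond)$ be a pre-GD-algebra, and that $(A,\diamond,\succ)$ be a pre-Poisson algebra. Taking $\diamond=0$, the left-symmetric axiom for $(A,\diamond)$ holds trivially; moreover every term occurring in the two pre-GD compatibility conditions, as well as in the pre-Poisson compatibility conditions~(\ref{PP-eq1}) and~(\ref{PP-eq2}), contains a factor built from $\diamond$, so all of these conditions reduce to $0=0$. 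Consequently the sole surviving requirement is that $(A,\lhd,\rhd,\succ)$ be a pre-differential Novikov-Poisson algebra.

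Combining the two observations, Proposition~\ref{corresp-pre} specialized to trivial $\diamond$ yields precisely the asserted equivalence: $(P,\circ_\lambda,\succ_\lambda)$ is a pre-Poisson conformal algebra if and only if $(A,\lhd,\rhd,\succ)$ is a pre-differential Novikov-Poisson algebra. I do not anticipate any genuine obstacle here, since the argument is a direct specialization; the only point requiring care is confirming that setting $\diamond=0$ indeed annihilates all the $\diamond$-dependent axioms, which is immediate because each such axiom is assembled entirely from terms involving $\diamond$.
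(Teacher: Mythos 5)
Your proposal is correct and takes exactly the paper's route: the paper's proof of this corollary is simply that it ``follows from Proposition \ref{corresp-pre} immediately,'' which is precisely the specialization to trivial $\diamond$ that you carry out. Your supporting check is also accurate---with $\diamond=0$ the left-symmetric axiom holds vacuously and every term of the two pre-GD compatibility conditions and of Eqs.~(\ref{PP-eq1})--(\ref{PP-eq2}) contains $\diamond$, so a pre-PGD-algebra with trivial $\diamond$ is exactly a pre-differential Novikov-Poisson algebra, and both directions of the equivalence follow.
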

\begin{proof}
It follows from Proposition \ref{corresp-pre} immediately.
\end{proof}

\begin{pro}\label{PDNP-DNP}
\begin{enumerate}
\item Let $(A,\lhd,\rhd,  \succ, \diamond)$ be a pre-PGD-algebra. Define
\begin{eqnarray*}\label{GD1}
a\circ b:=a\lhd b+a\rhd b,\;\;a\cdot b:=a\succ b+b\succ a,\;\;[a,b]:=a\diamond b-b\diamond a,\;\;\;a, b\in A.
\end{eqnarray*}
Then $(A,\circ,\cdot, [\cdot,\cdot])$ is a PGD-algebra, which is called the {\bf associated PGD-algebra} of $(A, \lhd, \rhd, \succ, \diamond)$.

\item Let $(A,\lhd,\rhd,  \succ, \diamond)$ be a pre-PGD-algebra
and  $(P={\bf k}[\partial]\otimes_{\bf k} A, \circ_\lambda, \succ_\lambda)$ be the
pre-Poisson conformal algebra corresponding to $(A, \lhd, \rhd,
\succ, \diamond)$. Then the associated Poisson conformal algebra
$(P, [\cdot_\lambda \cdot], \cdot_\lambda\cdot)$ of $(P,
\circ_\lambda, \succ_\lambda)$ defined by Eq.~{\rm (\ref{eq:48})}
is exactly the one corresponding to the associated PGD-algebra
$(A,\circ,\cdot, [\cdot,\cdot])$ of $(A,\lhd,\rhd, \succ,
\diamond)$ defined by Eq.~{\rm (\ref{wq2})}.
\end{enumerate}

\delete{Moreover,  $(A,  L_{A,\rhd}, R_{A,\lhd}, L_{A,\ast}, L_{A,\diamond})$ is
a representation of the PGD-algebra $(A, \circ, \cdot, [\cdot,\cdot])$.Conversely, let $A$ be a vector space
with binary operations $\rhd$, $\lhd$ and $\ast$. If $(A, \circ, \cdot)$ defined by Eq. (\ref{GD1}) is a differential Novikov-Poisson algebra and $(A,  L_{\rhd}, R_{\lhd}, L_\ast)$ is a representation of $(A, \circ, \cdot)$, then $(A, \lhd, \rhd, \ast)$ is a pre-differential Novikov-Poisson algebra.}
\end{pro}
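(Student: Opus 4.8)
The plan is to prove both statements simultaneously by reducing them to a single $\lambda$-bracket computation and then reading off the conclusion from the ``if and only if'' of Proposition~\ref{constr-PCA}. Since $(A,\lhd,\rhd,\succ,\diamond)$ is a pre-PGD-algebra, Proposition~\ref{corresp-pre} tells us that the corresponding $(P,\circ_\lambda,\succ_\lambda)$ is a pre-Poisson conformal algebra, so by Proposition~\ref{pre-Poisson-1} its associated structure $(P,[\cdot_\lambda\cdot],\cdot_\lambda\cdot)$, built from $\circ_\lambda$ and $\succ_\lambda$ via Eq.~\eqref{eq:48}, is automatically a Poisson conformal algebra. This is the crucial economy: I never need to verify a single Poisson conformal algebra (or PGD-algebra) axiom directly.

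The heart of the argument is to evaluate Eq.~\eqref{eq:48} explicitly. For the $\lambda$-product this is immediate: because $a\succ_\lambda b=a\succ b$ carries no $\lambda$ or $\partial$, one gets $a_\lambda b=a\succ b+b\succ a=a\cdot b$. For the $\lambda$-bracket the only delicate step is the substitution $\mu=-\lambda-\partial$ in $b\circ_{-\lambda-\partial}a$: expanding $b\circ_\mu a$ as a polynomial in $\mu$ and letting $\partial$ act as the derivation on $P$, the two $\partial(a\lhd b)$ contributions cancel and leave $b\circ_{-\lambda-\partial}a=b\diamond a-\lambda(b\rhd a+a\lhd b)-\partial(b\rhd a)$. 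Subtracting this from $a\circ_\lambda b$ and regrouping the pre-Novikov terms yields
\[
[a_\lambda b]=\partial(b\circ a)+\lambda(a\circ b+b\circ a)+(a\diamond b-b\diamond a),
\]
which is exactly the shape of Eq.~\eqref{wq2} with $b\circ a=b\lhd a+b\rhd a$, with $[a,b]=a\diamond b-b\diamond a$, and with $\lambda$-coefficient $a\circ b+b\circ a$, i.e. with $\star$ given precisely by $a\star b=a\circ b+b\circ a$.

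Both conclusions now drop out of Proposition~\ref{constr-PCA}. Reading off the operations $\circ,\cdot,[\cdot,\cdot],\star$ from the $\partial$-, $\lambda$-, and constant-coefficients of the bracket just computed is legitimate since $P={\bf k}[\partial]\otimes_{\bf k}A$ is free, so the associated Poisson conformal algebra is literally the structure \eqref{wq2} attached to these operations. Because we already know this structure is a Poisson conformal algebra and its $\star$ satisfies $a\star b=a\circ b+b\circ a$, the ``only if'' direction of Proposition~\ref{constr-PCA} forces $(A,\circ,\cdot,[\cdot,\cdot])$ to be a PGD-algebra, which is the first assertion. The ``if'' direction then says that this very Poisson conformal algebra is the one corresponding to $(A,\circ,\cdot,[\cdot,\cdot])$ in the sense of Eq.~\eqref{wq2}, which is the second assertion.

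I expect the only real obstacle to be bookkeeping rather than conceptual: one must track carefully where the derivation $\partial$ lands when it is fed into the $\mu$-slot of $\circ_\mu$, so that the spurious $\partial(a\lhd b)$ terms genuinely cancel and the surviving $\partial(b\rhd a)$ term combines correctly with $\partial(b\lhd a)$ into $\partial(b\circ a)$. Should the conformal shortcut be deemed too indirect, the first assertion can instead be established by checking the three defining compatibilities of a PGD-algebra directly from the pre-PGD-algebra axioms~\eqref{PDNP-eq1}--\eqref{PDNP-eq3} and \eqref{PP-eq1}--\eqref{PP-eq2}, using the known facts that a pre-GD-algebra yields a GD-algebra~\cite{XH} and a pre-Poisson algebra yields a Poisson algebra~\cite{A3}; I would keep this as a fallback only.
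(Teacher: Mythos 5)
Your proposal is correct, but it is organized differently from what the paper intends. The paper disposes of this proposition with the single line ``It is straightforward,'' which in context means a direct verification: part (1) by checking the GD-, Poisson- and differential Novikov--Poisson compatibilities of $(A,\circ,\cdot,[\cdot,\cdot])$ from the pre-PGD axioms (the GD and Poisson halves being the known facts from \cite{XH} and \cite{A3} that you relegate to your fallback), and part (2) by exactly the bracket computation you perform. Your route inverts this: you do the computation first and then harvest part (1) for free from the already-established equivalences (Proposition \ref{corresp-pre}, Proposition \ref{pre-Poisson-1}, and the ``only if'' half of Proposition \ref{constr-PCA}), so you never verify a single PGD identity. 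The computation itself is right: from $a\circ_\lambda b=\partial(b\lhd a)+\lambda(a\rhd b+b\lhd a)+a\diamond b$ one gets $b\circ_{-\lambda-\partial}a=b\diamond a-\lambda(b\rhd a+a\lhd b)-\partial(b\rhd a)$, the two $\partial(a\lhd b)$ terms cancelling as you say, whence $[a_\lambda b]=\partial(b\circ a)+\lambda(a\circ b+b\circ a)+(a\diamond b-b\diamond a)$, which is Eq.~(\ref{wq2}) with $\star$ the symmetrization of $\circ$; and freeness of $P={\bf k}[\partial]\otimes_{\bf k}A$ legitimizes reading the operations off the $\partial$-, $\lambda$- and constant coefficients, as you note. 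Two minor remarks: for part (2) the ``if'' direction of Proposition \ref{constr-PCA} is not actually needed --- once the bracket is in the shape (\ref{wq2}) and known to define a Poisson conformal algebra, the identification with the Poisson conformal algebra corresponding to $(A,\circ,\cdot,[\cdot,\cdot])$ is definitional; and there is no circularity, since neither Proposition \ref{corresp-pre} nor Proposition \ref{pre-Poisson-1} depends on the present statement. As for what each approach buys: yours is shorter and less error-prone, and it makes the commutativity of the diagram in the paper's final remark essentially transparent; the direct verification is self-contained at the level of the underlying algebraic identities and does not route through the conformal machinery, which is presumably why the authors regarded it as routine.
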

\begin{proof}
It is straightforward.
\end{proof}

\delete{The relationship between
pre-PGD-algebras and the $\mathcal O$-operators on the associated
PGD-algebras is given as follows.

\begin{pro}\label{cor:iden}
\begin{enumerate}
\item \label{it:1}Let $(A, \circ, \cdot, [\cdot,\cdot])$ be a PGD algebra, $(V, l_A,
r_A, \sigma_A, \rho_A)$ be a representation of $(A, \circ, \cdot, [\cdot,\cdot])$ and $T: V\rightarrow A$ be an
$\mathcal{O}$-operator on $(A, \circ, \cdot, [\cdot,\cdot])$ associated to $(V, l_A, r_A, \sigma_A, \rho_A)$. Then there
exists a pre-PGD-algebra structure on $V$ defined by
\begin{eqnarray}\label{eq:ndend} \notag
u\rhd v:=l_A(T(u))v,\;\; u\lhd v:= r_A(T(v))u, \;\; u\ast v:=\sigma_A(T(u))v,\;\;u\diamond v:=\rho_A(T(u))v,\; u,~v\in V.
\end{eqnarray}
\item\label{it:2}
Let $(A, \lhd, \rhd, \ast, \diamond)$ be a pre-PGD-algebra and $(A, \circ, \cdot, [\cdot,\cdot])$ be the
associated PGD-algebra. Then the identity map $I$ is an $\mathcal
O$-operator on $(A, \circ, \cdot, [\cdot,\cdot])$ associated to the representation $(A,
L_{A,\rhd}, R_{A,\lhd}, L_{A,\ast}, L_{A, \diamond})$.
\end{enumerate}
\end{pro}
\begin{proof}
It is straightforward.
\end{proof}

\begin{thm}\label{NYB-ND}
Let $(A, \lhd, \rhd, \ast, \diamond)$ be a  pre-PGD-algebra and $(A,
\circ, \cdot, [\cdot,\cdot])$ be the associated PGD-algebra.  Then
\begin{eqnarray}\label{eq:solu}
r:=\sum_{i=1}^n(e_i\otimes e_i^\ast-e_i^\ast\otimes e_i),
\end{eqnarray}
is a solution of the PGDYBE in
the  PGD-algebra $A\ltimes_{L_{A,\rhd}^\ast+R_{A,\lhd}^\ast,
-R_{A,\lhd}^\ast}^{ -L_{A,\ast}^\ast, L_{A,\diamond}^\ast}A^\ast$, where $\{e_1, \ldots, e_n\}$ is a linear basis of
$A$ and $\{e_1^\ast, \ldots, e_n^\ast\}$ is the dual basis of
$A^\ast$.
\end{thm}
\begin{proof}
By Proposition~\ref{cor:iden} (\ref{it:2}), the identity map
$I: A\rightarrow A$ is an $\mathcal{O}$-operator of $(A, \circ, \cdot, [\cdot,\cdot])$
associated to $(A, L_{A,\rhd}, R_{A,\lhd}, L_{A,\ast}, L_{A,\diamond})$. Therefore this
conclusion follows from Theorem \ref{oper-form2}.
\end{proof}}

Combining Theorem \ref{constr-PCB-pre-P} and Proposition
\ref{corresp-pre} together, we obtain the following conclusion.
\begin{thm}\label{Constr-P-Conf-pre-DNP}
Let $(A, \lhd, \rhd, \succ, \diamond)$ be a pre-PGD-algebra, $(P={\bf k}[\partial]\otimes_{\bf k}  A, \circ_\lambda, \succ_\lambda)$ be the pre-Poisson conformal algebra corresponding to $(A, \lhd, \rhd, \succ, \diamond)$ and $(P={\bf k}[\partial] \otimes_{\bf k} A, [\cdot_\lambda \cdot], \cdot_\lambda\cdot)$ be the associated Poisson conformal algebra of $(P={\bf k}[\partial] \otimes_{\bf k} A, \circ_\lambda, \succ_\lambda)$. Let $\{e_1, \ldots, e_n\}$ be a basis of $A$ and $\{e_1^\ast, \ldots, e_n^\ast\}$ be the dual basis of $A^\ast$.
Then
\begin{eqnarray*}
r=\sum_{i=1}^n(e_i\otimes e_i^\ast-e_i^\ast\otimes e_i)
\end{eqnarray*} is a skew-symmetric
solution of the PCYBE in the Poisson conformal algebra $\hat{P}=P\ltimes_{\mathfrak{L}_\circ^\ast,-\mathfrak{L}_{\succ}^\ast}P^{\ast c}$.
 Therefore there is a Poisson conformal bialgebra structure on the Poisson conformal algebra $\hat{P}$  given by $\delta(a)=({\mathfrak{ad}_{\hat{P}}(a)}_\lambda\otimes I+I\otimes {\mathfrak{ad}_{\hat{P}}(a)}_\lambda)
r|_{\lambda=-\partial^{\otimes^2}}$ and $\Delta(a)=(I\otimes {\mathfrak{L}_{\hat{P}}(a)}_\lambda-{\mathfrak{L}_{\hat{P}}(a)}_\lambda \otimes I)r|_{\lambda=-\partial^{\otimes^2}}$ for all $a\in \hat{P}$.
\end{thm}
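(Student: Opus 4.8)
The plan is to obtain this statement as a direct combination of Proposition \ref{corresp-pre} and Theorem \ref{constr-PCB-pre-P}, supplemented by Corollary \ref{cob-Poisson}, so that no fresh computation is required beyond matching up the data. First I would invoke Proposition \ref{corresp-pre}: since $(A, \lhd, \rhd, \succ, \diamond)$ is a pre-PGD-algebra, the pair of operations $(\circ_\lambda, \succ_\lambda)$ defined on $P = {\bf k}[\partial]\otimes_{\bf k} A$ by the formulas there makes $(P, \circ_\lambda, \succ_\lambda)$ a pre-Poisson conformal algebra. Because $A$ is finite-dimensional with linear basis $\{e_1, \ldots, e_n\}$, the module $P = {\bf k}[\partial]\otimes_{\bf k} A$ is free over ${\bf k}[\partial]$ with the same basis $\{e_1, \ldots, e_n\}$, and $\{e_1^\ast, \ldots, e_n^\ast\}$ is precisely the corresponding dual ${\bf k}[\partial]$-basis of $P^{\ast c}$. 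The associated Poisson conformal algebra $(P, [\cdot_\lambda \cdot], \cdot_\lambda \cdot)$ is the one furnished by Proposition \ref{pre-Poisson-1} via Eq.~{\rm (\ref{eq:48})}, so all hypotheses of Theorem \ref{constr-PCB-pre-P} are in place.

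Applying Theorem \ref{constr-PCB-pre-P} then yields at once that $r = \sum_{i=1}^n (e_i \otimes e_i^\ast - e_i^\ast \otimes e_i)$ is a solution of the PCYBE in $\hat{P} = P \ltimes_{\mathfrak{L}_\circ^\ast, -\mathfrak{L}_\succ^\ast} P^{\ast c}$. Moreover $r$ is manifestly skew-symmetric, since $\tau r = \sum_{i=1}^n (e_i^\ast \otimes e_i - e_i \otimes e_i^\ast) = -r$. Having a skew-symmetric solution of the PCYBE in hand, I would then appeal to Corollary \ref{cob-Poisson}, which guarantees that the coproducts defined by the coboundary formulas $\delta(a) = ({\mathfrak{ad}_{\hat{P}}(a)}_\lambda \otimes I + I \otimes {\mathfrak{ad}_{\hat{P}}(a)}_\lambda)\, r|_{\lambda = -\partial^{\otimes^2}}$ and $\Delta(a) = (I \otimes {\mathfrak{L}_{\hat{P}}(a)}_\lambda - {\mathfrak{L}_{\hat{P}}(a)}_\lambda \otimes I)\, r|_{\lambda = -\partial^{\otimes^2}}$, for all $a \in \hat{P}$, determine a Poisson conformal coalgebra $(\hat{P}, \delta, \Delta)$ for which $(\hat{P}, [\cdot_\lambda \cdot], \cdot_\lambda \cdot, \delta, \Delta)$ is a (coboundary) Poisson conformal bialgebra.

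Since every ingredient has already been established earlier in the paper, there is no genuine obstacle: the result is a corollary of the two cited statements. The only points requiring attention are purely bookkeeping, namely confirming that a ${\bf k}$-basis of $A$ indeed passes to a ${\bf k}[\partial]$-basis of $P = {\bf k}[\partial]\otimes_{\bf k} A$, so that the element $r$ coincides with the canonical element appearing in Theorem \ref{constr-PCB-pre-P}, together with the immediate verification of the skew-symmetry of $r$; both are routine and complete the argument.
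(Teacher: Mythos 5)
Your proposal is correct and takes essentially the same route as the paper, which derives the theorem precisely by combining Proposition \ref{corresp-pre} with Theorem \ref{constr-PCB-pre-P} (itself resting on Proposition \ref{pre-Poisson-1} and Theorem \ref{Poisson-Yang-Baxter-1}), the coboundary bialgebra structure then coming from Corollary \ref{cob-Poisson} exactly as you say. The bookkeeping points you single out --- that a ${\bf k}$-basis of $A$ yields the ${\bf k}[\partial]$-basis of $P$ with dual basis in $P^{\ast c}$, and the skew-symmetry $\tau r = -r$ --- are the only checks left implicit in the paper, and you handle them correctly.
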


\begin{rmk}
In fact, similar to the classical theory of Lie bialgebras
\cite{Bai, CP} as well as the study in Section 3.2, we can also obtain a construction of PGD-bialgebras from pre-PGD-algebras as follows without detailed explanation.\\
{\bf Claim:} {\it Let $(A, \lhd, \rhd, \succ, \diamond)$ be a
pre-PGD-algebra and $(A, \circ, \cdot, [\cdot,\cdot])$ be the
associated PGD-algebra.  Let $(\hat A=
A\ltimes_{L_{A,\rhd}^\ast+R_{A,\lhd}^\ast, -R_{A,\lhd}^\ast}^{
-L_{A,\succ}^\ast, L_{A,\diamond}^\ast}A^\ast,  \circ, \cdot,
[\cdot,\cdot])$ be the semi-direct product of $(A, \circ, \cdot,
[\cdot,\cdot])$ associated to the representation $(A^\ast,
L_{A,\rhd}^\ast+R_{A,\lhd}^\ast, -R_{A,\lhd}^\ast,
-L_{A,\succ}^\ast, L_{A,\diamond}^\ast)$. Set $
r=\sum_{i=1}^n(e_i\otimes e_i^\ast-e_i^\ast\otimes e_i)$ where
$\{e_1, \ldots, e_n\}$ is a basis of $A$ and $\{e_1^\ast, \ldots,
e_n^\ast\}$ is the dual basis of $A^\ast$. Then there is a
PGD-bialgebra structure $(\hat A, \circ, \cdot, [\cdot,\cdot],
\Delta_{1,r}, \Delta_{2,r}, \delta_r)$ defined by
\begin{eqnarray*}
\label{cobnov}&&\Delta_{1,r}(a):=-(L_{A,\circ}(a)\otimes I+I\otimes L_{A,\star}(a))r,\;\;\Delta_{2,r}(a)=(I\otimes L_{A,\cdot}(a)-L_{A,\cdot}(a)\otimes I)
r,\\
\mlabel{coLie}
&&\delta_r(a)=(\ad_A(a)\otimes I+I\otimes \ad_{A}(a))r,\;\;a\in \hat A.
\end{eqnarray*}}

Moreover, the Poisson conformal bialgebra corresponding to $(\hat
A, \circ, \cdot, [\cdot,\cdot], \Delta_{1,r}, \Delta_{2,r},
\delta_r)$ by Theorem~\ref{corr-bialg} is the same as the one
constructed by Theorem~\ref{Constr-P-Conf-pre-DNP}, that is, there
is
 the following commutative diagram
\begin{equation*}
    \begin{split}
        \xymatrix{
\text{pre-PGD-algebras} \ar[r]^{\text{Claim}} \ar@{->}[d]^{\text{Prop.} \ref{corresp-pre}}&
\text{PGD-bialgebras}   \ar@{->}[d]^{\text{Thm.} \ref{corr-bialg}} \\
\text{ pre-Poisson conformal algebras} \ar[r]^{\text{Thm.} \ref{constr-PCB-pre-P}} & \text{
Poisson conformal bialgebras  }}
    \end{split}
    \mlabel{eq:bigdiag}
   \vspace{-.2cm}
\end{equation*}

\delete{In fact, similar to the classical theory of Lie bialgebras
\cite{Bai,CP}) as well as the study in Section 3.2, one can also
develop a theory of PGD-bialgebras including the introduction of
the analogue of classical Yang-Baxter equation, say, PGD
Yang-Baxter equation (PGDYBE), and $\mathcal{O}$-operators on
PGD-algebras. In particular, there is a construction of
PGD-bialgebras from pre-PGD-algebras, which is illustrated as
follows.
\begin{equation*}
    \begin{split}
        \xymatrix{
         \txt{  pre-PGD-algebras} \ar[r]     & \txt{ $\mathcal{O}$-operators on\\ PGD-algebras}\ar[r] &
            \txt{solutions of\\the PGDYBE} \ar[r]  & \text{PGD-bialgebras}  }
        \vspace{-.1cm}
    \end{split}
    \mlabel{eq:Lieconfdiag}
\end{equation*}
Moreover from a pre-PGD-algebra,  there is a PGD-bialgebra
constructed by this way and the corresponding Poisson conformal
bialgebra is the same as the one constructed by
Theorem~\ref{Constr-P-Conf-pre-DNP}, that is, there is
 the following commutative diagram
\begin{equation*}
    \begin{split}
        \xymatrix{
\text{pre-PGD-algebras} \ar[r] \ar@{<->}[d]&
\text{PGD-bialgebras}   \ar@{<->}[d] \\
\text{a class of pre-Poisson conformal algebras} \ar[r] & \text{a class of
Poisson conformal bialgebras  }}
    \end{split}
    \mlabel{eq:bigdiag}
   \vspace{-.2cm}
\end{equation*}}
\end{rmk}

At the end of this paper, we illustrate the construction of
Poisson conformal bialgebras given in
Theorem~\ref{Constr-P-Conf-pre-DNP} by the following explicit
example (in fact, from a pre-differential Novikov-Poisson
algebra).
\begin{ex}
Let $(A={\bf k }e_1\oplus {\bf k} e_2 \oplus {\bf k} e_3, \succ)$
be a Zinbiel algebra in dimension $3$ whose non-trivial products
are given by
\begin{eqnarray*}
e_1\succ e_1=e_2, \;\;e_1\succ e_2=e_3,\;\; e_2\succ e_1=e_3.
\end{eqnarray*}
Let $\alpha\in {\bf k}$ and $D: A\rightarrow A$ be a derivation
given by
\begin{eqnarray*}
D(e_1)=e_1+\alpha e_2,\;\;D(e_2)=2e_2+2\alpha e_3,\;\;D(e_3)=3e_3.
\end{eqnarray*}
By Proposition \ref{constr-PDNP}, there is a pre-differential Novikov-Poisson algebra
$(A, \lhd, \rhd, \succ)$   whose non-trivial products are given by
\begin{eqnarray*}
&&e_1\lhd e_1=e_1\rhd e_1=e_2+\alpha e_3,\;\;e_1 \lhd e_2=e_1\rhd e_2=2e_3,\;\;e_2\lhd e_1=e_2\rhd e_1=e_3,\\
&&e_1\succ e_1=e_2,\;\; e_1\succ e_2=e_2\succ e_1=e_3.
\end{eqnarray*}
Then the pre-Poisson conformal algebra corresponding to $(A, \lhd, \rhd, \succ)$ is $(P={\bf k}[\partial]\otimes_{\bf k} A, \circ_\lambda, \succ_\lambda)$ whose non-trivial $\lambda$-products are given by
\begin{eqnarray*}
&&e_1\circ_{\lambda} e_1=(\partial +2\lambda)(e_2+\alpha e_3),\;\; e_1\circ_\lambda e_2=(\partial+3\lambda)e_3,\;\; e_2\circ_\lambda e_1=(2\partial+3\lambda)e_3,\\
&&e_1\succ_\lambda e_1=e_2,\;\;e_1\succ_\lambda
e_2=e_2\succ_\lambda e_1=e_3.
\end{eqnarray*}
The associated Poisson conformal algebra of $(P={\bf k}[\partial]\otimes_{\bf k} A, \circ_\lambda, \succ_\lambda)$ is  $(P={\bf k}[\partial]\otimes_{\bf k} A, [\cdot_\lambda \cdot], \cdot_\lambda\cdot)$ whose non-trivial $\lambda$-products are given by
\begin{eqnarray}
\label{eqq1}&&[{e_1}_\lambda e_1]=(\partial+2\lambda)(2e_2+2\alpha e_3),\;\;[{e_1}_\lambda e_2]=-[{e_2}_{-\lambda-\partial}e_1]=2(\partial+3\lambda) e_3,\\
\label{eqq2}&&{e_1}_\lambda e_1=2e_2,\;\; {e_1}_\lambda
e_2={e_2}_\lambda e_1=2e_3.
\end{eqnarray}
 Let $\{e_1^\ast, e_2^\ast, e_3^\ast\}$ be the basis of $A^\ast$ which is dual to $\{e_1, e_2, e_3\}$.
Then the semi-direct product of  $(P={\bf k}[\partial]\otimes_{\bf k} A,
[\cdot_\lambda \cdot], \cdot_\lambda\cdot)$ and the representation
$(P^{\ast c}={\bf k}[\partial]\otimes_{\bf k} A^\ast, \mathfrak{L}_\circ^\ast,
-\mathfrak{L}_\succ^\ast)$ is
$\hat{P}=P\ltimes_{\mathfrak{L}_\circ^\ast,-\mathfrak{L}_{\succ}^\ast}P^{\ast c}$  whose
non-trivial $\lambda$-products are given by Eqs. (\ref{eqq1}),
(\ref{eqq2}) and
\begin{eqnarray*}
&&[{e_1}_\lambda e_2^\ast]=-[{e_2^\ast}_{-\lambda-\partial}e_1]=(\partial-\lambda)e_1^\ast,\;\;[{e_1}_\lambda e_3^\ast]=-[{e_3^\ast}_{-\lambda-\partial}e_1]=\partial(\alpha e_1^\ast+ e_2^\ast)-\lambda (\alpha e_1^\ast+2e_2^\ast),\\
&&[{e_2}_\lambda e_3^\ast]=-[{e_3^\ast}_{-\lambda-\partial}e_2]=(2\partial-\lambda) e_1^\ast,\\
&&{e_1}_\lambda e_2^\ast={e_2}^\ast_\lambda e_1=e_1^\ast,\;\;{e_1}_\lambda e_3^\ast={e_3^\ast}_\lambda e_1= e_2^\ast,\;\;{e_2}_\lambda e_3^\ast={e_3^\ast}_\lambda e_2=e_1^\ast.
\end{eqnarray*}
Set $r=\sum_{i=1}^3(e_i\otimes e_i^\ast-e_i^\ast\otimes e_i)$. By
Theorem \ref{Constr-P-Conf-pre-DNP}, there is a Poisson conformal
bialgebra structure on  $\hat{P}$ with $\delta$ and $\Delta$
respectively given by
\begin{eqnarray*}
&&\delta(e_3)=\delta(e_1^\ast)=0,\;\; \delta(e_2^\ast)=-2(\partial e_1^\ast\otimes e_1^\ast-e_1^\ast\otimes \partial e_1^\ast),\\
&&\delta(e_1)=-(\partial e_2\otimes e_1^\ast-e_1^\ast\otimes \partial e_2)-\alpha(\partial e_3\otimes e_1^\ast-e_1^\ast\otimes \partial e_3)-2(\partial e_3\otimes e_2^\ast-e_2^\ast\otimes \partial e_3)\\
&&\quad +2(\partial e_1^\ast\otimes e_2-e_2\otimes \partial
e_1^\ast)
+2\alpha(\partial e_1^\ast\otimes e_3-e_3\otimes \partial e_1^\ast)+3(\partial e_2^\ast\otimes e_3-e_3\otimes \partial e_2^\ast),\\
&&\delta(e_2)=-(\partial e_3\otimes e_1^\ast-e_1^\ast\otimes \partial e_3)+3(\partial e_1^\ast\otimes e_3-e_3\otimes \partial e_1^\ast),\\
&&\delta(e_3^\ast)=-4(\partial e_1^\ast\otimes e_2^\ast-e_2^\ast\otimes\partial e_1^\ast)-2(\partial e_2^\ast\otimes e_1^\ast-e_1^\ast\otimes \partial e_2^\ast)-2\alpha(\partial e_1^\ast\otimes e_1^\ast-e_1^\ast\otimes \partial e_1^\ast),\\
&&\Delta(e_1)=-e_1^\ast\otimes e_2-e_2\otimes e_1^\ast-e_2^\ast\otimes e_3-e_3\otimes e_2^\ast,\\
&&\Delta(e_2)=-e_1^\ast\otimes e_3-e_3\otimes e_1^\ast,\;\;\Delta(e_3)=\Delta(e_1^\ast)=0,\\
&&\Delta(e_2^\ast)=-2e_1^\ast\otimes e_1^\ast,\;\;\Delta(e_3^\ast)=-2e_1^\ast\otimes e_2^\ast-2e_2^\ast\otimes e_1^\ast.
\end{eqnarray*}
\end{ex}

\noindent {\bf Acknowledgments.} This work is supported by NSFC
(12171129, 11931009, 12271265, 12261131498, 12326319),
 the Fundamental Research Funds for the Central Universities and Nankai Zhide Foundation.

\smallskip

\noindent
{\bf Declaration of interests. } The authors have no conflicts of interest to disclose.

\smallskip

\noindent
{\bf Data availability. } No new data were created or analyzed in this study.

\vspace{-.2cm}

\end{document}